\numberwithin{equation}{section}
\def\id{{\rm id}}
\def\lr#1{\left[#1\right]}
\def\lrb#1{\left(#1\right)}
\def\lq#1{\left[#1\right]_q}
\def\K{\mathbb{K}}
\def\d{\delta}
\def\A{\mathscr{A}}
\def\aw{\mathfrak{aw}}
\def\lbb#1{\left[\kern-0.15em\left[ {#1} \right] \kern-0.15em\right]}
\def\lbc#1{\left\lceil\kern-0.3em\left\lceil {#1}
 \right\rfloor\kern-0.3em\right\rfloor}
\newtheorem{Theorem}{Theorem}[section]
\newtheorem*{Theorem*}{Theorem}
\newtheorem{Corollary}[Theorem]{Corollary}
\newtheorem{Lemma}[Theorem]{Lemma}
\newtheorem{Proposition}[Theorem]{Proposition}
 { \theoremstyle{definition}
\newtheorem{Definition}[Theorem]{Definition}

\newtheorem{Example}[Theorem]{Example}
\newtheorem{Remark}[Theorem]{Remark} }
\begin{document}
\allowdisplaybreaks

\newcommand{\arXivNumber}{2407.10404}

\renewcommand{\PaperNumber}{115}

\FirstPageHeading

\ShortArticleName{On the Higher-Rank Askey--Wilson Algebras}

\ArticleName{On the Higher-Rank Askey--Wilson Algebras}

\Author{Wanxia WANG and Shilin YANG}
\AuthorNameForHeading{W.~Wang and S.~Yang}
\Address{School of Mathematics, Statistics and Mechanics, Beijing University of Technology, P.R.~China}
\Email{\href{mailto:18839136189@163.com}{18839136189@163.com}, \href{mailto:slyang@bjut.edu.cn}{slyang@bjut.edu.cn}}

\ArticleDates{Received July 16, 2024, in final form December 15, 2024; Published online December 28, 2024}

\Abstract{In the paper, the algebra $\mathscr{A}(n)$, which is generated by an upper triangular generating matrix with triple relations, is introduced.
It is shown that there exists an isomorphism between the algebra $\mathscr{A}(n)$ and the higher-rank Askey--Wilson algebra $\mathfrak{aw}(n)$ introduced by Cramp\'{e} et al. Furthermore, we establish a series of automorphisms of $\mathscr{A}(n)$, which satisfy braid group relations and coincide with those in $\mathfrak{aw}(n)$.}

\Keywords{Askey--Wilson algebra; braid group}

\Classification{20F36; 33D45; 81R12}

\section{Introduction}
The study of algebraic structures has been a cornerstone of mathematical research, providing profound insights and applications in various fields.
The Askey--Wilson algebras ${\rm AW}_q$, introduced by Zhedanov in \cite{Zhe(1991)}, have significant implications in mathematical physics, quantum groups and orthogonal polynomials.
For instance, Koelink and Stokman \cite{KS(2001)} studied the solutions of the Askey--Wilson $q$-difference equation, which is related to Askey--Wilson algebras, offering valuable insights into the associated functions and transformations.
Cramp\'{e} and Gaboriaud et al. \cite{CGVZ(2020)} investigated the connections between Askey--Wilson algebras and the universal $R$-matrix of $U_q(\mathfrak{sl}_2)$, providing a new perspective for understanding these algebras.
Terwilliger and Vidunas \cite{TV(2004)} studied the relationships between the Askey--Wilson algebras and the corresponding Leonard pairs, to highlight the importance of these algebras in representation theory.
Terwilliger in \cite{Ter(2011)} introduced the universal Askey--Wilson algebra $\Delta_q$, which are generalization of ${\rm AW}_q$.
We point out that Huang in \cite{Huang(2015),Huang(2021)} gave the classification of finite-dimensional irreducible modules of $\Delta_q$.
Lavrenov \cite{Lav(1997)} demonstrated their applications in integrable system, to show strongly their significance in theoretical physics.
More relevant references can be found in~\cite{CFGPRV(2021),CVZ(2021),GW(2023),Koor(2007),Koor(2008)}.

As a natural extension, it is interesting to address the definition of the higher-rank
Askey--Wilson algebras and explore their structures and properties, which can provide more complex descriptions with richer symmetries and representation theories.
These descriptions can demonstrate advantages in solving complex physical systems and high-dimensional spaces.
Baseilhac and Koizumi \cite{BK(2005)} have explored applications of a deformed analogue of Onsager's symmetry,
which can be viewed as an analog of higher-rank Askey--Wilson algebras, in physical systems such as the $XXZ$ open chain.
De Bie and van de Vijver \cite{DV(2020)} investigated the discrete realization of the higher rank Racah algebras.
The readers can be referred to \cite{CL(2022),DDV(2020),De(2019),PW(2017)} for more applications.

To the best of our knowledge, Cramp\'{e} et al.\ \cite{CFPR(2023)} recently introduced definition of the higher-rank Askey--Wilson algebras explicitly.
This definition is strongly general and provides a broader framework to understand these algebras.
In the same paper, the authors explored Casimir elements and a series of automorphisms, which enjoy the braid group relations.
We hope to find more intuitive definition of the higher-rank Askey--Wilson algebras by the generators and relations, and furthermore to recover the above braided group automorphisms.
Fortunately, these higher-rank Askey--Wilson algebras can be defined equivalently by the generators located in upper-triangular matrix with triple relations.
It is remarked that the proof presented in \cite{CFPR(2023)} is comprehensive, but relies heavily on computations of computer algebra. This sometimes may obscure those people who are not familiar with the computer algebra tools to understand their proofs.
In the present paper, our viewpoint provides more accessible methods for studying these algebras, which is not merely to replicate the known result.
It is also useful to understand the higher-rank Askey--Wilson algebras for a wider audience.

Now we prepare to give the outline of the paper.
In Section \ref{sect:notations}, the crucial notations and fundamental concepts are introduced, and we define the algebras $\A(n)$ by an upper triangular generating matrix $\A$ with several relations.
Several basic properties for these algebras are described.
In Section \ref{sect:relations}, it is shown that $\A(n)$ is isomorphic to the higher-rank Askey--Wilson algebra $\aw(n)$ given by Cramp\'{e} et al.\ in \cite{CFPR(2023)}.
In Section \ref{sect:automorphism}, we establish a series of automorphisms of $\A(n)$, which coincide with those automorphisms of $\aw(n)$ given in \cite{CFPR(2023)}.
In Section \ref{braid}, we prove that those automorphisms established in Section~\ref{sect:automorphism} satisfy the braid group relations independently in our approach.
We provide the detailed proofs of these results.
The exploration may help us to understand $\aw(n)$ in a more intuitive way, also to point out what we can do in the future such as constructing the PBW basis of the algebra $\A(n)$ or equivalently $\aw(n)$.

\section[The algebra A(n)]{The algebra $\boldsymbol{\A(n)}$}\label{sect:notations}
In this section, we list some notations to use in the sequel, then we give the definition of the algebra $\A(n)$.

Always assume that $\K$ is an algebraically closed field with $\mathrm{char} \K=0$,
and $q \in \K$ with $q^4 \neq 1$. Let $\mathbb{Z}$ be the ring of integers, $\mathbb{N}$ the set of the nonnegative integers and $\mathbb{N}^*=\mathbb{N} \setminus\{0\}.$
For $i, j \in \mathbb{Z}$, we denote by $\lbb{i,j}$ the set $\{i, i+1, \dots, j\}$, where $i<j$.

Suppose that $A$, $B$ are the operators and set
\begin{equation*}
[A, B] \overset{\bigtriangleup}{=} AB-BA, \qquad \lq{A,B}\overset{\bigtriangleup}{=} \frac{qAB-q^{-1}BA}{q-q^{-1}}.
\end{equation*}
Then, for given operators $A$, $B$, $C$, one easily sees that
\begin{gather} 
[A,B]+[B,A]=0, \label{eq:jacobi}\\
 [A,B]=\frac{q-q^{-1}}{q+q^{-1}}([A,B]_q-[B,A]_q), \label{eq:qjacobi0}\\
[[A,B]_q,C]_q-[A,[B,C]_q]_q =\frac{1}{\bigl(q-q^{-1}\bigr)^2} [B,[C,A]], \label{eq:qjacobi1}\\
[A,[B,C]_q]_q-[B,[C,A]_q]_q =\frac{q+q^{-1}}{q-q^{-1}} [[A,B],C]_{q^2},\\
[A,[B,C]_q]+[C,[A,B]_q]+ [B,[C,A]_{q}]=0. \label{eq:qjacobi3}
\end{gather}
Also, if $[A,B]=0$, then
\begin{gather}
 [A,B]_q=AB, \qquad [A,CB]_q=[A,C]_qB, \qquad [A,BC]_q=B[A,C]_q, \label{equ:comm1}\\
 [A, [C,B]_q]_q=[[A,C]_q,B]_q, \qquad [A,[B,C]_q]_q=[B,[A,C]_q]_q. \label{equ:comm2}
\end{gather}
In particular, if $[A,B]=[A,C]=0$, then
\begin{eqnarray}\label{equ:comm3}
[A, [B,C]_q]_q=A[B,C]_q.
\end{eqnarray}
We denote by \smash{$\A \overset{\bigtriangleup}{=} (a_{i,j})_{n\times n}$} an upper triangular generating matrix: $a_{i,j}=0$ whenever $i>j$.
Now, for convenience we partition the matrix into blocks in the following way:
\begin{itemize}\itemsep=0pt
 \item the block $\A_{11}(i-1,i-1)$: a $(i-1)\times (i-1)$-submatrix consisting of the first $i-1$ rows and columns;
 \item the block $\A_{12}(i-1,j-i)$: a $(i-1)\times (j-i)$-submatrix consisting of the rows from the $1$-th to the $(i-1)$-th and columns from the $i$-th to the $(j-1)$-th;
 \item the block $\A_{14}(i-1,n-j)$: a $(i-1)\times (n-j)$-submatrix consisting of the rows from the $1$-th to the $(i-1)$-th and columns from the $j$-th to the $n$-th;
 \item the block $\A_{32}(j-i-1,j-i-1)$: a $(j-i-1)\times (j-i-1)$-submatrix consisting of the rows from the $(i+1)$-th to the $(j-1)$-th and columns from the $(i+1)$-th to the $(j-1)$-th;
 \item the block $\A_{34}(j-i,n-j)$: a $(j-i)\times (n-j)$-submatrix consisting of the rows from the~$(i+1)$-th to the $j$-th and columns from the $(j+1)$-th to the $n$-th;
 \item the block $ \A_{44}(n-j,n-j)$: a $(n-j)\times (n-j)$-submatrix consisting of the rows from the~$(j+1)$-th to the $n$-th and columns from the $(j+1)$-th to the $n$-th.
\end{itemize}
Obviously, the $i$-th row and $j$-th column do not belong to any of the blocks mentioned above, which are naturally partitioned.

According to the partition, the matrix $\A$ is now represented as
\begin{equation}\label{eqn-A}
 \left(
 \begin{array}{c|c|c|c}
 \A_{11}(i-1,i-1) & \A_{12}(i-1,j-i) &
 \begin{array}{c} a_{1,j}\\ \vdots \\ a_{i-1, j} \end{array}
& \A_{14}(i-1,n-j)\\
 \hline
 \begin{array}{ccc} & & \end{array} &
 \begin{array}{lcl} a_{i,i} \quad & \quad \cdots \quad & \quad a_{i,j-1} \end{array}
 & a_{i, j} &
 \begin{array}{ccc} a_{i, j+1} & \cdots & a_{i, n} \end{array}
 \\
 \hline
 \begin{array}{ccc} & & \\
 & & \\
 & & \end{array} & \begin{array}{cc} \begin{array}{c}
 0\\
 \vdots
 \end{array}&
 \A_{32}(j-i-1,j-i-1)\\ 0&\cdots\cdots \qquad 0\end{array}
 &
 \begin{array}{c} a_{i+1, j}\\ \vdots \\ a_{j, j}\end{array}
 & \A_{34}(j-i,n-j)\\
 \hline
 \begin{array}{ccc} & & \\
 & & \\
 & & \end{array} & \begin{array}{ccc} & & \\
 & & \\
 & & \end{array} & \begin{array}{ccc} & & \\
 & & \\
 & & \end{array} & \A_{44}(n-j,n-j)
 \end{array}
 \right),
\end{equation}
where
\begin{gather*}
\A_{11}(i-1,i-1)=
 \begin{pmatrix}
 a_{1, 1} & \cdots & a_{1, i-1}\\
 & \ddots & \vdots \\
 & & a_{i-1, i-1}
 \end{pmatrix}
,
 \\
 \A_{12}(i-1,j-i)=
 \begin{pmatrix}
 a_{1, i} & \cdots & a_{1, j-1}\\
 \vdots & \vdots & \vdots \\
 a_{i-1, i} & \cdots & a_{i-1, j-1}
 \end{pmatrix}
, \\
\A_{14}(i-1,n-j)=
 \begin{pmatrix}
 a_{1, j} & \cdots & a_{1, n}\\
 \vdots & \vdots & \vdots \\
 a_{i-1, j} & \cdots & a_{i-1, n}
 \end{pmatrix},
 \\
\A_{32}(j-i-1,j-i-1)=
 \begin{pmatrix}
 a_{i+1, i+1} & \cdots & a_{i+1, j-1}\\
 & \ddots & \vdots \\
 & & a_{j-1, j-1}
 \end{pmatrix}
,\\
 \A_{34}(j-i,n-j)=
 \begin{pmatrix}
 a_{i+1, j+1} & \cdots & a_{i+1, n}\\
 \vdots & \vdots & \vdots \\
 a_{j, j+1} & \cdots & a_{j, n}\\
 \end{pmatrix}
,
 \\
 \A_{44}(n-j,n-j)=
 \begin{pmatrix}
 a_{j+1, j+1} & \cdots & a_{j+1, n}\\
 & \ddots & \vdots \\
 & & a_{n,n}
 \end{pmatrix}.
 \end{gather*}

The following notations are assigned:
\begin{itemize}\itemsep=0pt
\item For generators $a_{i, j} \in \A$ and $a_{k, l} \in \A_{34}(j-i, n-j)$, we choose the entries in the $i$-th, $k$-th, and $(j+1)$-th rows, and the $(k-1)$-th, $j$-th, and $l$-th columns, where $i < k \leq j < l$.
 Hence, we yield the submatrix
\begin{equation}\label{eqn-A-1}
\A_{i,k,j+1}^{k-1,j,l}=
\begin{pmatrix}
 a_{i,k-1}& a_{i,j} & a_{i,l}\\
 & a_{k,j} & a_{k, l}\\
 & & a_{j+1,l}
 \end{pmatrix}
,
\end{equation}
and let
\begin{gather}
f\bigl(\A_{i,k,j+1}^{k-1,j,l}\bigr)\overset{\bigtriangleup}{=} a_{i, j}+ a_{k, l} \lrb{a_{i, k-1}a_{j+1, l} + a_{k, j }a_{i, l}}-\lrb{a_{i, k-1}a_{k, j}+a_{i, l}a_{j+1, l}}, \label{eqn:natural}\\
g\bigl(\A_{i,k,j+1}^{k-1,j,l}\bigr)\overset{\bigtriangleup}{=} a_{k, l}+ a_{i, j}\lrb{a_{i, k-1}a_{j+1, l} + a_{k, j }a_{i, l}}-\lrb{a_{i, k-1}a_{i, l}+a_{k, j}a_{j+1, l}}. \label{eqn:sharp}
\end{gather}
For example, if we choose
\begin{equation*}
\A_{1,2,4}^{1,3,4}=
 \begin{pmatrix}
 a_{1,1}&a_{1,3} & a_{1,4}\\
 & a_{2,3} & a_{2, 4}\\
 & & a_{4,4}
 \end{pmatrix}
,
\end{equation*}
then
\begin{gather*}
f\bigl(\A_{1,2,4}^{1,3,4}\bigr)= a_{1, 3}+ a_{2, 4} \lrb{a_{1, 1}a_{4, 4} + a_{2, 3 }a_{1, 4}}-\lrb{a_{1, 1}a_{2, 3}+a_{1, 4}a_{4, 4}}, \\
g\bigl(\A_{1,2,4}^{1,3,4}\bigr)= a_{2, 4}+ a_{1, 3}\lrb{a_{1, 1}a_{4, 4} + a_{2, 3 }a_{1, 4}}-\lrb{a_{1, 1}a_{1, 4}+a_{2, 3}a_{4, 4}}.
\end{gather*}
 \item For generators $a_{i, j} \in \A$ and $a_{k, l} \in \A_{34}(j-i, n-j)$, we choose the entries in the $i$-th, $k$-th, and $(j+1)$-th rows, and the $j$-th, $l$-th, and $m$-th columns, where $i<k\leq j<l<m\leq n$.
 Hence we yield the submatrix
\begin{equation}\label{eqn-A-2}
\A_{i,k,j+1}^{j,l,m}=
 \begin{pmatrix}
a_{i,j}& a_{i,l}& a_{i,m}\\
a_{k,j} & a_{k,l} & a_{k, m}\\
& a_{j+1,l} & a_{j+1,m }
 \end{pmatrix}
,
\end{equation}
and let
\begin{gather}
{\det}_q\bigl(\A_{i,k,j+1}^{j,l,m}\bigr)
 \overset{\bigtriangleup}{=} [ [a_{i, j}, a_{k, l} ]_q, a_{j+1, m} ]_q+ [a_{i,l},a_{k,m} ]_q+ [ [a_{i, m}, a_{k, j} ]_q, a_{j+1, l} ]_q \nonumber\\
\phantom{{\det}_q\bigl(\A_{i,k,j+1}^{j,l,m}\bigr)
 \overset{\bigtriangleup}{=}}{}
- [ [ a_{i, l }, a_{k,j} ]_q, a_{j+1, m} ]_q - [ [ a_{i, j}, a_{k, m} ]_q, a_{j+1, l} ]_q - [a_{i, m}, a_{k, l} ]_q, \label{eqn:D-1}\\
 {\det}^q\bigl(\A_{i,k,j+1}^{j,l,m}\bigr)
 \overset{\bigtriangleup}{=} [ [a_{j+1, m}, a_{k, l} ]_q, a_{i, j} ]_q+ [a_{k,m}, a_{i,l} ]_q + [ [a_{j+1, l}, a_{k, j} ]_q, a_{i, m} ]_q \nonumber\\
\phantom{ {\det}^q\bigl(\A_{i,k,j+1}^{j,l,m}\bigr)
 \overset{\bigtriangleup}{=}}{} - [ [ a_{j+1, m}, a_{k, j} ]_q, a_{i, l} ]_q - [ [a_{j+1, l}, a_{k, m} ]_q, a_{i, j} ]_q- [a_{k, l}, a_{i, m} ]_q. \label{eqn:D-2}
\end{gather}
For example, if we choose
\begin{equation*}
\A_{1,2,4}^{3,4,5}=
 \begin{pmatrix}
a_{1,3}& a_{1,4}& a_{1,5}\\
a_{2,3} & a_{2,4} & a_{2, 5}\\
& a_{4,4} & a_{4,5 }
 \end{pmatrix}
,
\end{equation*}
then
\begin{gather*}
 {\det}_q\bigl(\A_{1,2,4}^{3,4,5}\bigr)
= [ [a_{1, 3}, a_{2, 4} ]_q, a_{4, 5} ]_q+ [a_{1,4},a_{2,5} ]_q+ [ [a_{1, 5}, a_{2, 3} ]_q, a_{4, 4} ]_q \\
\phantom{ {\det}_q(\A_{1,2,4}^{3,4,5})
=}{}
- [ [ a_{1, 4 }, a_{2,3} ]_q, a_{4, 5} ]_q - [ [ a_{1, 3}, a_{2, 5} ]_q, a_{4, 4} ]_q - [a_{1, 5}, a_{2, 4} ]_q, \\
 {\det}^q\bigl(\A_{1,2,4}^{3,4,5}\bigr)
= [ [a_{4, 5}, a_{2, 4} ]_q, a_{1, 3} ]_q+ [a_{2,5}, a_{1,4} ]_q + [ [a_{4, 4}, a_{2, 3} ]_q, a_{1, 5} ]_q \\
 \phantom{ {\det}^q(\A_{1,2,4}^{3,4,5})
= }{}- [ [ a_{4, 5}, a_{2, 3} ]_q, a_{1, 4} ]_q - [ [a_{4, 4}, a_{2, 5} ]_q, a_{1, 3} ]_q- [a_{2, 4}, a_{1, 5} ]_q.
\end{gather*}
\end{itemize}

Now we can introduce the definition of the algebra $\A(n)$ explicitly.
\begin{Definition}\label{defn:A(n)}
The $ \mathbb{K}$-algebra $\A(n)$ is an associative algebra generated by the upper generating matrix $\A$ in $\eqref{eqn-A}$, subjecting to the following defining relations:
\begin{itemize}\itemsep=0pt
\item[{\rm R1:}] The entries $a_{i,j}$ commutates with all entries in $\A$ except in $\A_{12}\lrb{i-1,j-i}$ and $\A_{34}(j-i,\allowbreak n-j).$

\item[{\rm R2:}] The entries of any submatrix \smash{$\A_{i,k,j+1}^{k-1,j,l}$} as $\eqref{eqn-A-1}$ enjoy the relations
\begin{gather}\label{rela-2}
[a_{k, l}, [a_{i, j}, a_{k, l}]_q]_q=f\bigl(\A_{i,k,j+1}^{k-1,j,l}\bigr), \qquad [a_{i, j}, [a_{k, l}, a_{i, j}]_q ]_q=g\bigl(\A_{i,k,j+1}^{k-1,j,l}\bigr).
\end{gather}

\item[{\rm R3:}] The entries of any submatrix \smash{$\A_{i,k,j+1}^{j,l,m}$} as $\eqref{eqn-A-2}$ enjoy the relations
\begin{gather}\label{rela-3}
{\det}_q\bigl(\A_{i,k,j+1}^{j,l,m}\bigr)={\det}^q\bigl(\A_{i,k,j+1}^{j,l,m}\bigr)=0.
\end{gather}
\end{itemize}
\end{Definition}
In this situation, we say that the algebra $\A(n)$ is generated by the matrix $\A$ with the relations (R1)--(R3).

\begin{Remark}\label{remark}
For $n=2$, relations ${\rm (R2)}$ and ${\rm (R3)}$ are automatically disappeared.
Hence, the algebra $\A(2)$ is generated by $a_{1,1}$, $ a_{1,2}$, $a_{2,2}$ with the relations
\begin{equation*}
[a_{1,1},a_{1,2}]=[a_{1,1},a_{2,2}]=[a_{1,2},a_{2,2}]=0.
\end{equation*}
In other words, $\A(2)\cong \K[x_1, x_2, x_3].$

For $n=3$, the relation ${\rm (R3)}$ is disappeared.
Hence, the algebra $\A(3)$ is generated by the entries in the matrix
\begin{equation*}
\begin{pmatrix}
a_{1,1}& a_{1,2}&a_{1,3}\\
& a_{2,2}&a_{2,3}\\
& &a_{3,3}
\end{pmatrix}
,
\end{equation*}
with the relations {\rm (R1)} and {\rm (R2)}.
The relation {\rm (R1)} indicates that $a_{1,1}$, $a_{2,2}$, $a_{3,3}$, $a_{1,3}$ are central and the relation {\rm (R2)} says that
\begin{gather*}
\bigl(q^2+q^{-2}\bigr)a_{2,3}a_{1,2}a_{2,3}-a_{2,3}^2a_{1,2}-a_{1,2}a_{2,3}^2 \\
\qquad=\bigl(q-q^{-1}\bigr)^2 (a_{1, 2}+\lrb{a_{1, 1}a_{3, 3} + a_{2, 2 }a_{1, 3}} a_{2, 3} -\lrb{a_{1, 1}a_{2, 2}+a_{1, 3}a_{3, 3}}),\\
\bigl(q^2+q^{-2}\bigr)a_{1,2}a_{2,3}a_{1,2}-a_{1,2}^2a_{2,3}-a_{2,3}a_{1,2}^2 \\
\qquad=\bigl(q-q^{-1}\bigr)^2 (a_{2, 3}+ \lrb{a_{1, 1}a_{3, 3} + a_{2, 2 }a_{1, 3}}a_{1, 2}-\lrb{a_{1, 1}a_{1, 3}+a_{2, 2}a_{3, 3}}).
\end{gather*}
\end{Remark}
Recall that the Askey--Wilson algebra ${\rm AW}_q$, introduced by Zhedanov in \cite{Zhe(1991)}, is presented with generators $K_0$, $K_1$ and relations
\begin{gather*}
\bigl(q^2+q^{-2}\bigr)K_1K_0K_1-K_1^2K_0-K_0K_1^2=BK_1+C_0K_0+D_0,\\
\bigl(q^2+q^{-2}\bigr)K_0K_1K_0-K_0^2K_1-K_1K_0^2=BK_0+C_1K_1+D_1,
\end{gather*}
where $B$, $C_0$, $C_1$, $D_0$, $D_1$ are the structural constants of the algebra.
If we set central elements
\begin{alignat*}{3}
& B=\bigl(q-q^{-1}\bigr)^2\lrb{a_{1, 1}a_{3, 3} + a_{2, 2 }a_{1, 3}}, \qquad && C_0=C_1=\bigl(q-q^{-1}\bigr)^2, & \\
& D_0=-\bigl(q-q^{-1}\bigr)^2\lrb{a_{1, 1}a_{2, 2}+a_{1, 3}a_{3, 3}}, \qquad&&
D_1=-\bigl(q-q^{-1}\bigr)^2\lrb{a_{1, 1}a_{1, 3}+a_{2, 2}a_{3, 3}}&
\end{alignat*}
are in the field $\K$, then the algebra $\A(3)$ is just a particular Askey--Wilson algebra ${\rm AW}_q$.

\begin{Example}\label{Example}
The algebra $\A(4)$ is generated by $a_{i,j}$ $( 1 \leq i \leq j \leq 4)$ with the relations as follows$:$
\begin{itemize}\itemsep=0pt
\item[\rm{R1:}] $a_{1,1}$, $a_{2,2}$, $a_{3,3}$, $a_{4,4}$, $a_{1,4}$ are in the center of $\A\lrb{4}$ and
\[
\lr{a_{1, 2}, a_{1, 3}}=\lr{a_{1, 2}, a_{3, 4}} =\lr{a_{1, 3}, a_{2, 3}}=\lr{a_{2, 3}, a_{2, 4}}=\lr{a_{2,4}, a_{3, 4}}=0;
 \]
\item[\rm{R2:}]
\begin{gather*}
[a_{2, 3}, [a_{1, 2}, a_{2, 3}]_q]_q=f\bigl(\A^{1,2,3}_{1,2,3}\bigr), \qquad
 [a_{1, 2}, [a_{2, 3}, a_{1, 2}]_q]_q=g\bigl(\A^{1,2,3}_{1,2,3}\bigr),\\
 [a_{3, 4}, [a_{2, 3}, a_{3, 4}]_q]_q=f\bigl(\A^{2,3,4}_{2,3,4}\bigr), \qquad
 [a_{2, 3}, [a_{3, 4}, a_{2, 3}]_q]_q=g\bigl(\A^{2,3,4}_{2,3,4}\bigr),\\
[a_{2, 4}, [a_{1, 3}, a_{2, 4}]_q]_q=f\bigl(\A^{1,3,4}_{1,2,4}\bigr), \qquad
 [a_{1, 3}, [a_{2, 4}, a_{1, 3}]_q]_q=g \bigl(\A^{1,3,4}_{1,2,4}\bigr),\\
[a_{2, 4}, [a_{1, 2}, a_{2, 4}]_q]_q=f\bigl(\A^{1,2,4}_{1,2,3}\bigr), \qquad
[a_{1, 2}, [a_{2, 4}, a_{1, 2}]_q]_q=g \bigl(\A^{1,2,4}_{1,2,3}\bigr),\\
[a_{3, 4}, [a_{1, 3}, a_{3, 4}]_q]_q=f\bigl(\A^{2,3,4}_{1,3,4}\bigr), \qquad
 [a_{1, 3}, [a_{3, 4}, a_{1, 3}]_q]_q=g \bigl(\A^{2,3,4}_{1,3,4}\bigr);
\end{gather*}
\item[\rm{R3:}]
\smash{$
{\det}_q\bigl(\A_{1,2,3}^{2,3,4}\bigr) ={\det}^q\bigl(\A_{1,2,3}^{2,3,4}\bigr)=0$}.
\end{itemize}
\end{Example}

The following relations are used to prove the main results.
\begin{Lemma}\label{lem:eqn-1}
For the submatrix \smash{$\A_{j,k,i+1}^{i,l,m}$} as $\eqref{eqn-A-2}$, the following relations hold in $\A(n)$:
\begin{gather}
 [\lr{a_{j,l}, a_{k,m}}_q, a_{j,i}]_q +a_{j,k-1}a_{i+1,l}a_{j,m} +\lr{a_{k,l}, a_{i+1,m}}_q \nonumber \\
 \qquad= a_{j,m}\lr{a_{k,l}, a_{j,i}}_q+a_{j,k-1}\lr{a_{j,l}, a_{i+1,m}}_q+a_{i+1,l}a_{k,m}, \tag{1.a} \label{eqn:1a}\\
 [\lr{a_{j,i}, a_{k,m}}_q, a_{j,l}]_q+a_{j,k-1}a_{i+1,l}a_{j,m}+\lr{a_{i+1,m}; a_{k,l}}_q \nonumber \\
 \qquad = a_{j,k-1}\lr{a_{i+1,m}, a_{j,l}}_q +a_{j,m}\lr{a_{j,i}, a_{k,l}}_q+a_{i+1,l}a_{k,m},\tag{1.b} \label{eqn:1b} \\
 [\lr{a_{k,m}, a_{j,l}}_q, a_{i+1,m}]_q+a_{k,i}a_{l+1,m}a_{j,m} +\lr{a_{k,l}, a_{j,i}}_q \nonumber \\
 \qquad = a_{j,m}\lr{a_{k,l}, a_{i+1,m}}_q+a_{l+1,m}\lr{a_{k,m}, a_{j,i}}_q +a_{k,i}a_{j,l}, \tag{2.a} \label{eqn:2a}\\
 [\lq{a_{i+1,m},a_{j,l}}, a_{k,m}]_q+a_{k,i}a_{l+1,m}a_{j,m} +\lr{a_{j,i}, a_{k,l}}_q\nonumber \\
 \qquad = \lr{a_{i+1,m}, a_{k,l}a_{j,m}}_q +a_{l+1,m}\lr{a_{j,i}, a_{k,m}}_q+a_{k,i}a_{j,l}, \tag{2.b} \label{eqn:2b}\\
 [\lr{a_{k,l}, a_{j,i}}_q, a_{k,m}]_q +a_{j,k-1}a_{k,i}a_{l+1,m}+\lr{a_{j,l}, a_{i+1,m}}_q \nonumber \\
 \qquad= a_{k,i}\lr{a_{j,l}, a_{k,m}}_q+a_{j,k-1}\lr{a_{k,l}, a_{i+1,m}}_q+a_{l+1,m}a_{j,i}, \tag{3.a} \label{eqn:3a}\\
 [\lr{a_{k,m}, a_{j,i}}_q, a_{k,l}]_q+a_{j,k-1}a_{k,i}a_{l+1,m}+\lr{a_{i+1,m}, a_{j,l}}_q \nonumber \\
 \qquad= a_{k,i}\lr{a_{k,m}, a_{j,l}}_q+a_{j,k-1}\lr{a_{i+1,m}, a_{k,l}}_q+a_{l+1,m}a_{j,i}, \tag{3.b} \label{eqn:3b}\\
 [\lq{a_{j,l},a_{i+1,m}}, a_{k,l}]_q+a_{j,k-1}a_{i+1,l}a_{l+1,m} +\lr{a_{j,i}, a_{k,m}}_q \nonumber \\
 \qquad= a_{i+1,l}\lr{a_{j,l}, a_{k,m}}_q +a_{l+1,m}\lr{a_{j,i}, a_{k,l}}_q +a_{j,k-1}a_{i+1,m},\tag{4.a} \\
 [\lq{a_{k,l},a_{i+1,m}}, a_{j,l}]_q +a_{j,k-1}a_{i+1,l}a_{l+1,m} +\lr{a_{k,m}, a_{j,i}}_q \nonumber \\
 \qquad= a_{i+1,l}\lr{a_{k,m}, a_{j,l}}_q +a_{l+1,m}\lr{a_{k,l}, a_{j,i}}_q +a_{j,k-1}a_{i+1,m}, \tag{4.b} \label{eqn:4b} \\
 [\lr{a_{i,l}, a_{j,i}}_q, a_{k,l}]_q+a_{k,i-1}a_{i+1,l}a_{j,l}+\lr{a_{j,i-1}, a_{k,i}}_q \nonumber \\
 \qquad = a_{i+1,l}\lr{a_{j,i-1}, a_{k,l}}_q +a_{j,l}\lr{a_{i,l}, a_{k,i}}_q+a_{k,i-1}a_{j,i}, \tag{5.a} \\
 [\lr{a_{k,l}, a_{j,i}}_q, a_{i,l}]_q+a_{k,i-1}a_{i+1,l}a_{j,l}+\lr{a_{k,i}, a_{j,i-1}}_q \nonumber \\
 \qquad = a_{i+1,l}\lr{a_{k,l}, a_{j,i-1}}_q+a_{j,l}\lr{a_{k,i}, a_{j,k}}_q+a_{k,i-1}a_{j,i}, \tag{5.b} \\
 [\lr{a_{i,l}, a_{j,i}}_q, a_{i,m}]_q+a_{j,i-1}a_{i,i}a_{l+1,m} +\lr{a_{j,l}, a_{i+1,m} }_q\nonumber \\
 \qquad = a_{i,i}\lr{a_{j,l}, a_{i,m}}_q +a_{j,i-1}\lr{a_{i,l}, a_{i+1,m}}_q+a_{j,i}a_{l+1,m}, \tag{6.a} \label{eqn:6a}\\
 [\lr{a_{i,m}, a_{j,i}}_q, a_{i,l}]_q+a_{j,i-1}a_{i,i}a_{l+1,m} +\lr{a_{i+1,m}, a_{j,l}}_q \nonumber \\
 \qquad = a_{i,i}\lr{a_{i,m}, a_{j,l}}_q+a_{j,i-1}\lr{a_{i+1,m}, a_{i,l}}_q+a_{j,i}a_{l+1,m}. \tag{6.b} \label{eqn:6b}
\end{gather}
\end{Lemma}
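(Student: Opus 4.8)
The plan is to derive each of the twelve identities (1.a)--(6.b) as formal consequences of the defining relations (R1)--(R3), specifically of relation (R3) applied to an appropriate $3\times 3$ submatrix $\A_{j,k,i+1}^{i,l,m}$, together with the $q$-Jacobi identities \eqref{eq:qjacobi1}--\eqref{eqn:comm3} and the commutation relations (R1). The key observation is that each identity has the shape ``(a triple $q$-bracket) + (one or two monomials) + (a single $q$-bracket) = (mirror terms)'', which is exactly the structure of the two vanishing quantities ${\det}_q$ and ${\det}^q$ in \eqref{eqn:D-1}--\eqref{eqn:D-2} once one expands some of the $q$-brackets using \eqref{equ:comm1} for the pairs of generators that (R1) forces to commute. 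So the first step is, for each line, to identify which entries of the chosen submatrix commute by (R1) -- e.g. in $\A_{j,k,i+1}^{i,l,m}$ the entries $a_{j,i}$ and $a_{k,l}$ need not commute, but $a_{j,i}$ and $a_{i+1,m}$, or $a_{j,k-1}$ and various others, typically do -- and then to rewrite ${\det}_q = 0$ (or ${\det}^q = 0$) accordingly.

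Concretely, I would proceed as follows. First, fix the indices so that $a_{j,i}\in\A$ and $a_{k,l}\in\A_{34}(i-j,n-i)$ in the sense of \eqref{eqn-A-2}, and write out ${\det}_q\bigl(\A_{j,k,i+1}^{i,l,m}\bigr)=0$ and ${\det}^q\bigl(\A_{j,k,i+1}^{i,l,m}\bigr)=0$ in full. Second, use (R1) to collapse each double $q$-bracket $[[a,b]_q,c]_q$ in which one of the three generators commutes with another: by \eqref{equ:comm1} a commuting factor can be pulled out of a $q$-bracket, and by \eqref{equ:comm2}--\eqref{eqn:comm3} nested $q$-brackets simplify to plain products when enough commutativity is present. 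After this reduction every term is either a genuine single $q$-bracket of two non-commuting generators, or an ordinary product of a central-type factor with a $q$-bracket, or a monomial; matching these against the six lines (and their $q \leftrightarrow q^{-1}$, i.e.\ $a\leftrightarrow b$, ``b''-variants) gives (1.a)--(6.b) after transposing terms across the equality. Identities (1.a)/(1.b), (3.a)/(3.b) should come from ${\det}_q$ with the roles of $l$ and $m$ (respectively $i$ and $j$) permuted, while (2.a)/(2.b), (4.a)/(4.b) arise from applying the same procedure to a shifted submatrix (replacing the index $i$ by $j$ or $l$ in the ``row/column'' positions), and (5.a)/(5.b), (6.a)/(6.b) are the degenerate cases where one of the ``corner'' entries is a diagonal generator $a_{i,i}$ (so extra commutativity from (R1) applies), exactly as in the $\A_{i,k,j+1}^{k-1,j,l}$ versus $\A_{i,k,j+1}^{j,l,m}$ distinction made in Section~\ref{sect:notations}.

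The main obstacle I expect is bookkeeping rather than conceptual depth: there is a real danger of sign and $q$-power errors when expanding the six-term $q$-determinants and when repeatedly invoking \eqref{eq:qjacobi1} (which carries the awkward factor $1/(q-q^{-1})^2$) and \eqref{eq:qjacobi3}. The cleanest way to control this is to never expand a $q$-bracket into $qAB - q^{-1}BA$ unless forced to, and instead work entirely at the level of the bracket identities \eqref{eq:jacobi}--\eqref{eqn:comm3}, treating ``$[\,\cdot\,,\,\cdot\,]_q$'' as an opaque bilinear operation with the listed properties; this keeps each of the twelve derivations to a handful of substitutions. A secondary subtlety is making sure that for each line the submatrix $\A_{j,k,i+1}^{i,l,m}$ one writes down genuinely satisfies the index constraints $j<k\le i<l<m\le n$ required for (R3) to apply, and that the asserted commutations are exactly those guaranteed by (R1) (no more); I would state these index ranges explicitly at the start of each case. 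Once the reduction dictionary between ${\det}_q/{\det}^q$ and the twelve lines is set up, the proof is a finite check, and the only thing worth writing out in detail in the paper is one representative case (say (1.a)) with the others following by the same recipe.
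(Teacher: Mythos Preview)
Your plan has a real gap: the identities (1.a)--(6.b) are not merely simplifications of ${\det}_q=0$ or ${\det}^q=0$ under (R1)-commutation. Look at the generators that actually occur. In (1.a) you see $a_{j,k-1}$; in (2.a), $a_{l+1,m}$; in (5.a), $a_{j,i-1}$ and $a_{k,i-1}$; in (6.a), $a_{i,i}$ and $a_{j,i-1}$. None of these is an entry of the submatrix $\A_{j,k,i+1}^{i,l,m}$ whose $q$-determinants you propose to expand, so no amount of collapsing $[\,\cdot\,,\,\cdot\,]_q$ to ordinary products via \eqref{equ:comm1}--\eqref{equ:comm3} can make them appear. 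Since each identity involves generators coming from four different columns (e.g.\ $k-1,i,l,m$ in (1.a)), it cannot be a reduction of any single $3\times3$ relation (R3).

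The missing ingredient is (R2). The paper's proof of (1.a) proceeds as follows: form $\mathrm{LHS}-\mathrm{RHS}$, then use (R3) once to rewrite $[a_{j,l},a_{k,m}]_q-a_{j,m}a_{k,l}$; this produces nested brackets such as $[[[a_{j,i},a_{k,l}]_q,a_{i+1,m}]_q,a_{j,i}]_q$ and $[[a_{j,i},a_{k,m}]_q,a_{j,i}]_q$ in which the \emph{same} generator $a_{j,i}$ sits both inside and outside. After a swap via \eqref{equ:comm2} (using $[a_{j,i},a_{i+1,m}]=0$) these become $[g(\A_{j,k,i+1}^{k-1,i,l}),a_{i+1,m}]_q$ and $g(\A_{j,k,i+1}^{k-1,i,m})$ by (R2), and it is precisely the function $g$ of \eqref{eqn:sharp} that introduces $a_{j,k-1}$. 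After expanding $g$, the remaining polynomial expression cancels to zero. So the correct skeleton is: one application of (R3), a reordering via \eqref{equ:comm2}, then (R2) to collapse repeated-generator triple brackets into $f$ or $g$, and finally \eqref{eqn:natural}--\eqref{eqn:sharp} to finish. Your bookkeeping instincts are sound, but the argument genuinely needs (R2) in the middle, not only (R3) at the start.
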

\begin{proof}
We only verify the relation \eqref{eqn:1a} here.
The proofs of the other relations are in a similar way.
We have
\begin{gather*}
 [\lr{a_{j,l},a_{k,m}}_q,a_{j,i}]_q +a_{j,k-1}a_{i+1,l}a_{j,m} +\lr{a_{k,l},a_{i+1,m}}_q \\
 \qquad {}-a_{j,m}\lr{a_{k,l},a_{j,i}}_q-a_{j,k-1}\lr{a_{j,l},a_{i+1,m}}_q-a_{i+1,l}a_{k,m} \\
\qquad\qquad \xlongequal{\quad} [\lr{a_{j,l},a_{k,m}}_q-a_{j,m}a_{k,l},a_{j,i}]_q +a_{j,k-1}a_{i+1,l}a_{j,m} -a_{j,k-1}\lr{a_{j,l},a_{i+1,m}}_q\\
 \phantom{\qquad\qquad\xlongequal{\quad}}{} +\lr{a_{k,l},a_{i+1,m}}_q -a_{i+1,l}a_{k,m} \\
\qquad\qquad \xlongequal{\eqref{rela-3}} {}-[[a_{j,i},\lr{a_{k,l},a_{i+1,m}}_q ]_q -a_{i+1,l}\lr{a_{j,i},a_{k,m} }_q -a_{k,i}\lr{a_{j,l},a_{i+1,m}}_q\\
 \phantom{\qquad\qquad\xlongequal{\qquad}}{}+a_{k,i}a_{i+1,l}a_{j,m},
 a_{j,i}]_q +a_{j,k-1}a_{i+1,l}a_{j,m} -a_{j,k-1}\lr{a_{j,l},a_{i+1,m}}_q\\
 \phantom{\qquad\qquad\xlongequal{\qquad}}{}
 +\lr{a_{k,l},a_{i+1,m}}_q -a_{i+1,l}a_{k,m} \\
\qquad\qquad \xlongequal{\quad} -[[\lr{a_{j,i},a_{k,l}}_q,a_{i+1,m} ]_q,a_{j,i}]_q +a_{i+1,l}[\lr{a_{j,i},a_{k,m}}_q, a_{j,i}]_q \\
 \phantom{\qquad\qquad\xlongequal{\quad}}{}+a_{k,i}[\lr{a_{j,l},a_{i+1,m}}_q, a_{j,i}]_q -a_{j,i}a_{k,i}a_{i+1,l}a_{j,m}+a_{j,k-1}a_{i+1,l}a_{j,m} \\
 \phantom{\qquad\qquad\xlongequal{\quad}}{}-a_{j,k-1}\lr{a_{j,l},a_{i+1,m}}_q+\lr{a_{k,l},a_{i+1,m}}_q -a_{i+1,l}a_{k,m} \\
 \qquad\qquad \xlongequal[\eqref{equ:comm2}]{\eqref{rela-2}} -\bigl[g\bigl(\A_{j,k,i+1}^{k-1,i,l}\bigr),a_{i+1,m}\bigr]_q +a_{i+1,l}g\bigl(\A_{j,k,i+1}^{k-1,i,m}\bigr) +a_{k,i}[\lr{a_{j,l},a_{i+1,m}}_q, a_{j,i}]_q \\
 \phantom{\qquad\qquad\xlongequal{\qquad}}{} -a_{j,i}a_{k,i}a_{i+1,l}a_{j,m} +a_{j,k-1}a_{i+1,l}a_{j,m} -a_{j,k-1}\lr{a_{j,l},a_{i+1,m}}_q\\
 \phantom{\qquad\qquad\xlongequal{\qquad}}{} +\lr{a_{k,l},a_{i+1,m}}_q -a_{i+1,l}a_{k,m} \\
 \qquad\qquad \xlongequal{\eqref{equ:comm2}} -\bigl[g\bigl(\A_{j,k,i+1}^{k-1,i,l}\bigr)+a_{j,k-1}a_{j,l}-a_{k,l},a_{i+1,m}\bigr]_q +a_{k,i}[\lr{a_{j,l},a_{i+1,m}}_q, a_{j,i}]_q \\
 \phantom{\qquad\qquad\xlongequal{\qquad}\!\! }{} +a_{i+1,l}\bigl(g\bigl(\A_{j,k,i+1}^{k-1,i,m}\bigr) +a_{j,k-1}a_{j,m}-a_{k,m}\bigr) -a_{j,i}a_{k,i}a_{i+1,l}a_{j,m} \\
 \qquad\qquad \xlongequal{\eqref{eqn:sharp}} \lrb{a_{k,i}a_{i+1,l} -a_{j,k-1}a_{j,i}a_{i+1,l}}a_{i+1,m} -a_{i+1,l}\lrb{a_{k,i}a_{i+1,m} -a_{j,i}a_{j,k-1}a_{i+1,m}}\\
 \qquad\qquad \xlongequal{\quad} 0.
\end{gather*}
This proof is finished.
\end{proof}

\begin{Lemma}\label{lem:eqn-2}
For the submatrix \smash{$\A_{j,k,i+1}^{i,l,m}$} as {\rm \eqref{eqn-A-2}}, the following relations hold in $\A(n)$:
\begin{gather}
 [a_{k,i}a_{j,l}-\lr{a_{j,i},a_{k,l}}_q, a_{i+1,l}a_{k,m}-\lr{a_{i+1,m},a_{k,l}}_q ]=0, \label{eqn:com1}\\
 [a_{k,i}a_{j,l}-\lr{a_{j,i},a_{k,l}}_q, a_{j,k-1}a_{i+1,m}-\lr{a_{j,i},a_{k,m}}_q]=0 , \\
 [a_{j,k-1}a_{i+1,m}-\lr{a_{j,i},a_{k,m}}_q, a_{k,l}a_{j,m}-\lr{a_{j,l},a_{k,m}}_q]=0, \label{eqn:com3}\\
 [a_{j,i}a_{l+1,m}-\lr{a_{j,l},a_{i+1,m}}_q,
 a_{i+1,l}a_{k,m}-[a_{k,l},a_{i+1,m}]_q]=0, \label{eqn:com4}\\
 [a_{j,i}a_{l+1,m}-\lr{a_{j,l},a_{i+1,m}}_q,
 a_{k,l}a_{j,m}-\lr{a_{j,l},a_{k,m}}_q]=0.
\end{gather}
\end{Lemma}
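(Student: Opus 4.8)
The plan is to establish each of the five identities by a direct computation in the spirit of the proof of Lemma~\ref{lem:eqn-1}, feeding in the relations (R1)--(R3), the relations of Lemma~\ref{lem:eqn-1} itself, and the $q$-Jacobi identities \eqref{eq:jacobi}--\eqref{eq:qjacobi3} together with \eqref{equ:comm1}--\eqref{equ:comm3}. I describe the argument for \eqref{eqn:com1}; the other four cases run in parallel after relabelling. Throughout, recall that the indices of the submatrix $\A_{j,k,i+1}^{i,l,m}$ satisfy $j<k\le i<l<m\le n$.

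Write $X=a_{k,i}a_{j,l}-[a_{j,i},a_{k,l}]_q$ and $Y=a_{i+1,l}a_{k,m}-[a_{i+1,m},a_{k,l}]_q$ and expand $[X,Y]$ bilinearly into four commutators. The first step is to read off from (R1) the commutativities $[a_{k,i},a_{j,l}]=[a_{k,i},a_{i+1,l}]=[a_{k,i},a_{k,m}]=[a_{k,i},a_{i+1,m}]=[a_{k,i},a_{k,l}]=0$, $[a_{j,l},a_{i+1,l}]=[a_{j,l},a_{k,l}]=0$, $[a_{j,i},a_{i+1,l}]=[a_{j,i},a_{i+1,m}]=0$ and $[a_{k,l},a_{i+1,l}]=[a_{k,l},a_{k,m}]=0$, while $a_{j,l}$ fails to commute with both $a_{k,m}$ and $a_{i+1,m}$, $a_{j,i}$ fails to commute with both $a_{k,m}$ and $a_{k,l}$, and $a_{k,l}$ fails to commute with $a_{i+1,m}$. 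Using these together with \eqref{equ:comm1}, \eqref{equ:comm2} and the easily checked rule $[A,[B,C]_q]=[[A,B],C]_q$ when $[A,C]=0$, three of the four commutators collapse, and one obtains
\begin{gather*}
 [X,Y] = a_{k,i}a_{i+1,l}[a_{j,l},a_{k,m}] - a_{k,i}[[a_{j,l},a_{i+1,m}],a_{k,l}]_q \\
 {} - a_{i+1,l}[[a_{j,i},a_{k,m}],a_{k,l}]_q + [[a_{j,i},a_{k,l}]_q,[a_{i+1,m},a_{k,l}]_q].
\end{gather*}

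The last commutator $[[a_{j,i},a_{k,l}]_q,[a_{i+1,m},a_{k,l}]_q]$ is the crux: since $a_{k,l}$ commutes with neither $a_{j,i}$ nor $a_{i+1,m}$, it is not simplified by (R1) alone. Expanding both $q$-brackets and using $[a_{j,i},a_{i+1,m}]=0$, which removes the cross term, rewrites it as a combination of $a_{i+1,m}[[a_{j,i},a_{k,l}],a_{k,l}]_q$, $a_{j,i}[[a_{i+1,m},a_{k,l}],a_{k,l}]_q$ and a single quadratic bracket term. The nested brackets $[[a_{j,i},a_{k,l}],a_{k,l}]_q$ and $[[a_{i+1,m},a_{k,l}],a_{k,l}]_q$ are then converted, via \eqref{eq:qjacobi0} and the relations (R2) for the submatrices $\A_{j,k,i+1}^{k-1,i,l}$ and $\A_{k,i+1,l+1}^{i,l,m}$, into the polynomials $f$ and $g$, while $[a_{j,l},a_{k,m}]$, $[[a_{j,l},a_{i+1,m}],a_{k,l}]_q$ and $[[a_{j,i},a_{k,m}],a_{k,l}]_q$ are re-expressed through (R3) and the relations of Lemma~\ref{lem:eqn-1}. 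Collecting all contributions, the $f$- and $g$-terms, the $\det_q$- and $\det^q$-terms and the quadratic bracket term should cancel in pairs, leaving $[X,Y]=0$. The remaining identities \eqref{eqn:com3}, \eqref{eqn:com4} and the two unnumbered ones follow exactly the same three steps: in each of the five cases the two inner $q$-brackets share a common generator, so the cross term of the analogue of the last commutator vanishes by (R1). The main obstacle is purely organisational — orienting every $q$-bracket correctly and matching each nested bracket that appears to the right relation among those of Lemma~\ref{lem:eqn-1}; once the (R1)-commutativities are in hand, the calculation is routine.
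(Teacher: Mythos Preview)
Your first reduction is fine and matches the paper in spirit: peeling off the commuting factors via (R1) to reach
\[
[X,Y]=a_{k,i}a_{i+1,l}[a_{j,l},a_{k,m}]-a_{k,i}[[a_{j,l},a_{i+1,m}],a_{k,l}]_q-a_{i+1,l}[[a_{j,i},a_{k,m}],a_{k,l}]_q+\bigl[[a_{j,i},a_{k,l}]_q,[a_{i+1,m},a_{k,l}]_q\bigr]
\]
is correct. The divergence, and the gap, is in how you treat the last term.

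You propose to ``expand both $q$-brackets'' and assert that, using $[a_{j,i},a_{i+1,m}]=0$, the commutator $\bigl[[a_{j,i},a_{k,l}]_q,[a_{i+1,m},a_{k,l}]_q\bigr]$ decomposes into $a_{i+1,m}[[a_{j,i},a_{k,l}],a_{k,l}]_q$, $a_{j,i}[[a_{i+1,m},a_{k,l}],a_{k,l}]_q$ and one residual piece. That identity is not a consequence of $[a_{j,i},a_{i+1,m}]=0$ alone; a direct expansion gives degree-four words that do not regroup in this shape. Moreover, even if it did, the inner brackets you produce are \emph{ordinary} commutators $[a_{j,i},a_{k,l}]$, whereas (R2) evaluates the nested $q$-commutators $[a_{k,l},[a_{j,i},a_{k,l}]_q]_q$; converting between the two via \eqref{eq:qjacobi0} introduces further terms you have not accounted for. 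So the sentence ``should cancel in pairs'' is where the proof actually lives, and it is missing.

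The paper avoids this difficulty by not isolating the double $q$-commutator at all. After splitting off $a_{i+1,l}[X,a_{k,m}]$, it applies the cyclic identity \eqref{eq:qjacobi3} with $A=X$, $B=a_{i+1,m}$, $C=a_{k,l}$ to obtain
\[
-\bigl[X,[a_{i+1,m},a_{k,l}]_q\bigr]=\bigl[a_{i+1,m},[a_{k,l},X]_q\bigr]+\bigl[a_{k,l},[X,a_{i+1,m}]_q\bigr].
\]
Now $[a_{k,l},X]_q=[a_{k,l},-[a_{j,i},a_{k,l}]_q+a_{k,i}a_{j,l}]_q$ is computed immediately from (R2) (it is $-f\bigl(\A_{j,k,i+1}^{k-1,i,l}\bigr)$ plus a commuting correction), and $[X,a_{i+1,m}]_q$ is computed from (R3) for $\A_{j,k,i+1}^{i,l,m}$. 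After one conversion through \eqref{eq:qjacobi0}, everything is expressed in terms that match exactly the left- and right-hand sides of \eqref{eqn:3a} and \eqref{eqn:3b} of Lemma~\ref{lem:eqn-1}, and the difference vanishes. If you want to rescue your four-term expansion, the correct move is the same: apply \eqref{eq:qjacobi3} to your fourth term (with $A=[a_{j,i},a_{k,l}]_q$), which feeds it back into (R2) rather than into a brute-force expansion.
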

\begin{proof}
Here we only verify the relation \eqref{eqn:com1}.
The proofs of the other relations are in a similar way.
We have
\begin{gather*}
 [- [a_{j,i}, a_{k,l} ]_q+a_{k,i}a_{j,l}, - [a_{i+1,m}, a_{k,l} ]_q+a_{i+1,l}a_{k,m} ]\\
\qquad \xlongequal{\quad} {}- [- [a_{j,i}, a_{k,l} ]_q+a_{k,i}a_{j,l}, [a_{i+1,m}, a_{k,l} ]_q ]+a_{i+1,l} [- [a_{j,i}, a_{k,l} ]_q+a_{k,i}a_{j,l}, a_{k,m} ]\\
\qquad \xlongequal{\eqref{eq:qjacobi3}} [a_{i+1,m}, [a_{k,l}, - [a_{j,i}, a_{k,l} ]_q+a_{k,i}a_{j,l} ]_q ] + [a_{k,l}, [- [a_{j,i}, a_{k,l} ]_q+a_{k,i}a_{j,l}, a_{i+1,m} ]_q ]\\
\phantom{\qquad\xlongequal{\qquad}}{} +a_{i+1,l} [- [a_{j,i}, a_{k,l} ]_q+a_{k,i}a_{j,l}, a_{k,m} ]\\
\qquad \xlongequal[\eqref{rela-3}]{\eqref{rela-2}} [a_{i+1,m}, -a_{j,i}+a_{j,k-1}a_{k,i}+a_{i+1,l}a_{j,l} ]+ [a_{k,l}, [a_{j,l}, a_{k,m} ]_q-a_{j,k-1}a_{l+1,m} \\
\phantom{\qquad\xlongequal{\qquad}}{} -a_{k,l}a_{j,m}+a_{j,k-1}a_{l+1,m} +a_{i+1,l} (- [a_{j,i}, a_{k,m} ]_q +a_{j,k-1}a_{i+1,m} +a_{k,i}a_{j,m} ) ]\\
\phantom{\qquad\xlongequal{\qquad}}{} +a_{i+1,l} [- [a_{j,i}, a_{k,l} ]_q+a_{k,i}a_{j,l}, a_{k,m} ]\\
\qquad \xlongequal{(\textrm R1)} a_{i+1,l} [a_{i+1,m}, a_{j,l} ]+a_{i+1,l} [a_{k,l}, - [a_{j,i}, a_{k,m} ]_q+a_{j,k-1}a_{i+1,m}+a_{k,i}a_{j,m} ]\\
\phantom{\qquad\xlongequal{\qquad}}{} +a_{i+1,l} [- [a_{j,i}, a_{k,l} ]_q+a_{k,i}a_{j,l}, a_{k,m} ]\\
\qquad \xlongequal{\quad} a_{i+1,l} ( [a_{i+1,m}, a_{j,l} ]- [a_{k,l}, [a_{j,i}, a_{k,m} ]_q-a_{j,k-1}a_{i+1,m}-a_{k,i}a_{j,m} ] \\
 \phantom{\qquad \xlongequal{\quad}}{} - [ [a_{j,i}, a_{k,l} ]_q-a_{k,i}a_{j,l}, a_{k,m} ] )\\
 \qquad\xlongequal{\eqref{eq:qjacobi0}} \frac{q-q^{-1}}{q+q^{-1}}a_{i+1,l} ( [a_{i+1,m}, a_{j,l} ]_q- [a_{j,l}, a_{i+1,m} ]_q \\
\phantom{\qquad\xlongequal{\qquad}}{} + [a_{k,l}, - [a_{j,i}, a_{k,m} ]_q+a_{j,k-1}a_{i+1,m}+a_{k,i}a_{j,m} ]_q \\
\phantom{\qquad\xlongequal{\qquad}}{} - [ - [a_{j,i}, a_{k,m} ]_q+a_{j,k-1}a_{i+1,m}+a_{k,i}a_{j,m}, a_{k,l} ]_q \\
\phantom{\qquad\xlongequal{\qquad}}{} + [- [a_{j,i}, a_{k,l} ]_q+a_{k,i}a_{j,l}, a_{k,m} ]_q- [ a_{k,m}, (- [a_{j,i}, a_{k,l} ]_q+a_{k,i}a_{j,l} ) ]_q )\\
\qquad \xlongequal{\quad} \frac{q-q^{-1}}{q+q^{-1}}a_{i+1,l} ( ( [a_{i+1,m}, a_{j,l} ]_q+ [ [a_{k,m}, a_{j,i} ]_q,a_{k,l} ]_q- a_{k,i} [a_{k,m}, a_{j,l} ]_q \\
 \phantom{\qquad \xlongequal{\quad}}{} -a_{j,k-1} [a_{i+1,m}, a_{k,l} ]_q )- ( [a_{j,l}, a_{i+1,m} ]_q+ [ [a_{k,l}, a_{j,i} ]_q,a_{k,m} ]_q
\\
 \phantom{\qquad \xlongequal{\quad}}{} -a_{k,i} [a_{j,l}, a_{k,m} ]_q-a_{j,k-1} [a_{k,l}, a_{i+1,m} ]_q ) )\\
\qquad \xlongequal[\eqref{eqn:3b}]{\eqref{eqn:3a}} 0.
\end{gather*}
This proof is finished.
\end{proof}
\begin{Proposition}\label{prop:3}
The sub-algebra $\mathcal{W}$ of $\A(n)$ generated by the entries of \smash{$\A_{i,k,j+1}^{k-1,j,l}$} as $\eqref{eqn-A-1}$ is isomorphic to $\A(3)$.
\end{Proposition}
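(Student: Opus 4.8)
The strategy is to produce homomorphisms $\phi\colon\A(3)\to\A(n)$ and $\psi\colon\A(n)\to\A(3)$ with $\psi\circ\phi=\id_{\A(3)}$ and whose composite $\phi$ has image exactly $\mathcal W$; then $\phi$ restricts to an isomorphism $\A(3)\xrightarrow{\,\sim\,}\mathcal W$. I would define $\phi$ by sending the generating matrix of $\A(3)$ onto the submatrix $\A_{i,k,j+1}^{k-1,j,l}$, i.e.
\[
a_{1,1}\mapsto a_{i,k-1},\quad a_{1,2}\mapsto a_{i,j},\quad a_{1,3}\mapsto a_{i,l},\quad a_{2,2}\mapsto a_{k,j},\quad a_{2,3}\mapsto a_{k,l},\quad a_{3,3}\mapsto a_{j+1,l}.
\]
That this respects the defining relations of $\A(3)$ takes little work: $\A(3)$ has no relation (R3); the two (R2)-relations of $\A(3)$ are by construction exactly the two (R2)-relations attached to $\A_{i,k,j+1}^{k-1,j,l}$ in $\A(n)$; and for (R1) one needs only that $a_{i,k-1}$, $a_{k,j}$, $a_{j+1,l}$, $a_{i,l}$ commute with the five remaining entries of $\A_{i,k,j+1}^{k-1,j,l}$, which is read off from the block partition in (R1) of $\A(n)$ (for instance $a_{i,l}$ avoids both non-commuting blocks of each of $a_{i,j}$, $a_{k,j}$, $a_{k,l}$, $a_{j+1,l}$). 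Hence $\phi$ is a homomorphism with image $\mathcal W$.

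For the retraction, set $A=\lbb{i,k-1}$, $B=\lbb{k,j}$, $C=\lbb{j+1,l}$, fix the anchors $i\in A$, $k\in B$, $j+1\in C$, write $S=\{i,k,j+1\}$ and relabel $S$ as $\{1,2,3\}$. For a generator $a_{p,q}$ of $\A(n)$ put $T_{p,q}=\lbb{p,q}\cap S$; as $i<k<j+1$ the set $T_{p,q}$ is always an interval of $\{1,2,3\}$ (the pair $\{1,3\}$ cannot occur), so one may define
\[
\psi(a_{p,q})=
\begin{cases}
a_{u,v}, & T_{p,q}=\lbb{u,v}\neq\emptyset,\\
1, & T_{p,q}=\emptyset,
\end{cases}
\]
and extend $\psi$ multiplicatively. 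By construction $\psi(\phi(a_{u,v}))=a_{u,v}$ for each of the six generators of $\A(3)$, so as soon as $\psi$ is shown to be well defined one gets $\psi\circ\phi=\id_{\A(3)}$, whence $\phi$ is injective and $\phi\colon\A(3)\xrightarrow{\,\sim\,}\mathcal W$ is the desired isomorphism.

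What remains, and where the bulk of the argument lies, is to verify that $\psi$ respects (R1)--(R3). Relation (R1) is easy: the only non-commuting pair of generators of $\A(3)$ is $\{a_{1,2},a_{2,3}\}$, and inspecting the sets $T$ shows that whenever $\psi(a_{p,q})=a_{1,2}$ and $\psi(a_{r,s})=a_{2,3}$, the entry $a_{r,s}$ already lies in the non-commuting block $\A_{34}(q-p,n-q)$ attached to $a_{p,q}$ (and symmetrically with the roles reversed), so (R1) of $\A(n)$ never imposes a commutation that $\psi$ would break. For (R2) and (R3) one runs through the positions of the relevant $3\times3$ submatrix $\A_{p,r,s+1}^{r-1,s,t}$ (resp.\ $\A_{p,r,s+1}^{s,t,u}$) relative to the blocks $A,B,C$. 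Because the entries map to a compatible $3\times3$ array of elements of $\A(3)$ (with some slots possibly equal to the scalar $1$), in each case the image of the relation either is one of the two (R2)-relations of $\A(3)$, or — using that $a_{1,1},a_{2,2},a_{3,3},a_{1,3}$ are central and that $\lq{1,X}=\lq{X,1}=X$ — simplifies to an expression whose terms telescope to $0$; the value $1$ is precisely what is needed for the degenerate forms of the (R2)-relations, and the vanishing of the images of the $q$-determinant relations (R3) is the $q$-analogue of a determinant with two equal columns being zero. I expect this case analysis to be the main obstacle: no single configuration is hard, but all of them must be treated and the telescoping confirmed throughout. (If $n\le3$ there is nothing to prove, since then either no such submatrix exists, or $\mathcal W$ is already all of $\A(3)$.)
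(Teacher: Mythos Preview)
Your approach is correct and in fact substantially more rigorous than what the paper provides. The paper's proof simply writes down the obvious bijection between the six entries of $\A_{i,k,j+1}^{k-1,j,l}$ and the generators of $\A(3)$, records which four are central in $\mathcal W$, and then asserts ``It is obvious that $\psi$ is an algebraic isomorphism.'' No argument is given for injectivity of the map $\A(3)\to\mathcal W$ --- i.e., nothing rules out that relations of $\A(n)$ involving \emph{other} generators might force additional relations among these six. Your retraction $\psi\colon\A(n)\to\A(3)$ is precisely what is needed to close that gap.

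Your construction via the anchor set $S=\{i,k,j+1\}$ is the right idea and does work. For (R2), the key observation is that $\psi(a_{p,s})=a_{1,2}$ and $\psi(a_{r,t})=a_{2,3}$ together force $T_{p,r-1}=\{1\}$, $T_{r,s}=\{2\}$, $T_{s+1,t}=\{3\}$, so one lands on the genuine (R2)-relation of $\A(3)$; in every other configuration the images all lie in the commutative subalgebra generated by $a_{1,1},a_{2,2},a_{3,3},a_{1,3}$ together with at most one of $a_{1,2},a_{2,3}$, and then both sides of the image relation equal $\psi(a_{r,t})^2\psi(a_{p,s})$ (resp.\ $\psi(a_{p,s})^2\psi(a_{r,t})$) by $[X,[Y,X]_q]_q=X^2Y$ for commuting $X,Y$ and a direct telescoping of $f,g$. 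For (R3), the four blocks $[p,r-1],[r,s],[s+1,t],[t+1,u]$ partition an interval carrying at most three anchors, so at least one block has empty $T$; this forces two ``rows'' or two ``columns'' of the image $3\times3$ array to coincide, and one checks in each of the four degenerations that the six terms of ${\det}_q$ (and of ${\det}^q$) cancel in pairs --- exactly your ``equal columns'' heuristic.

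So the difference between the two routes is that the paper takes the isomorphism for granted while you supply the missing half via an explicit retraction. Your route buys an actual proof, at the cost of a routine but non-negligible case check that the paper simply omits.
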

\begin{proof}
The algebra $\A(3)$ is described as in Remark \ref{remark}.
Set a map $\psi\colon\mathcal{W}$ to $\A(3)$ such that
\begin{gather*}
\psi\bigl(\A_{i,k,j+1}^{k-1,j,l}\bigr)=\psi\begin{pmatrix}
a_{i,k-1}& a_{i,j}&a_{i,l}\\
& a_{k,j}&a_{k,l}\\
& &a_{j+1,l}
\end{pmatrix}
=\begin{pmatrix}
a_{1,1}& a_{1,2}&a_{1,3}\\
& a_{2,2}&a_{2,3}\\
& &a_{3,3}
\end{pmatrix}.
\end{gather*}
It is obvious that $\psi$ is an algebraic isomorphism.
Finally, we note that $a_{i,k-1}$, $a_{k,j}$, $a_{j+1,l}$, $a_{i,j}$ are in the center of the subalgebra $\mathcal{W}$.
\end{proof}
\begin{Corollary}\label{cor:comm}
Given two generators $a_{i,j}, a_{k,l} \in \A$, then $[a_{i,j}, a_{k,l}]=0$ if one of the following holds:
\begin{enumerate}\itemsep=0pt
 \item[$(1)$] $i = k$;
 \item[$(2)$] $j = l$;
 \item[$(3)$] $a_{i,j}$, $a_{k,l}$ locate on the anti-diagonal of any $3\times 3$ submatrix of the matrix $\A.$
\end{enumerate}
\end{Corollary}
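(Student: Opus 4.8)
The plan is to read everything off of relation (R1) together with the precise location of its two ``exceptional'' blocks, so that no relation beyond (R1) is needed. The starting observation is that $\A_{12}(i-1,j-i)$ sits in rows $1,\dots,i-1$ and columns $i,\dots,j-1$ of $\A$, whereas $\A_{34}(j-i,n-j)$ sits in rows $i+1,\dots,j$ and columns $j+1,\dots,n$; hence an entry $a_{k,l}$ belongs to neither block whenever its row index exceeds $i$ or its column index is smaller than $j$ — in particular whenever it lies in row $i$ or in column $j$. By (R1), $a_{i,j}$ then commutes with $a_{k,l}$.

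For $(1)$ and $(2)$ this is immediate: if $i=k$ then $a_{k,l}$ lies in row $i$, and if $j=l$ then $a_{k,l}$ lies in column $j$, so in both cases (R1) gives $[a_{i,j},a_{k,l}]=0$.

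For $(3)$ I would write the $3\times 3$ submatrix through its row set $r_1<r_2<r_3$ and column set $c_1<c_2<c_3$, so that its anti-diagonal consists of $a_{r_1,c_3}$, $a_{r_2,c_2}$, $a_{r_3,c_1}$. If two of these coincide the claim is trivial, and entries below the main diagonal of $\A$ vanish, so we may take the two entries in question to be honest generators. Relabelling so that $a_{i,j}$ is the one in the earlier row, we get $a_{i,j}=a_{r_s,c_{4-s}}$ and $a_{k,l}=a_{r_t,c_{4-t}}$ with $s<t$, hence $k=r_t>r_s=i$ and $l=c_{4-t}<c_{4-s}=j$; so $a_{k,l}$ again misses both exceptional blocks of $a_{i,j}$ and (R1) applies. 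By \eqref{eq:jacobi} it does not matter which entry is named $a_{i,j}$.

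The main point requiring care is the index bookkeeping in $(3)$: one must notice that along an anti-diagonal the row index grows exactly when the column index shrinks, so such a pair always sits in the ``nested'' configuration $k>i$, $l<j$, and never in the ``crossing'' configuration $i<k\le j<l$ that is the sole case (R1) leaves open (and that is instead controlled by (R2)); beyond that no step should present a real difficulty. If one prefers, $(3)$ can also be obtained from Proposition \ref{prop:3}: the pair $a_{r_1,c_3}$, $a_{r_2,c_2}$ is the $(1,3)$- and $(2,2)$-entry of $\A_{r_1,r_2,c_2+1}^{r_2-1,c_2,c_3}$, hence is carried by the isomorphism $\mathcal{W}\cong\A(3)$ to central generators of $\A(3)$ (Remark \ref{remark}), and the other two pairs are handled identically.
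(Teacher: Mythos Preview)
Your argument is correct and matches the paper's one-line proof, which just cites Definition~\ref{defn:A(n)} and Proposition~\ref{prop:3}; your primary route through (R1) alone for case~(3) is in fact a touch more direct than invoking the $\A(3)$ isomorphism, though you also sketch that alternative.

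One wording issue worth fixing: the sentence ``an entry $a_{k,l}$ belongs to neither block whenever its row index exceeds $i$ or its column index is smaller than $j$'' is false as stated (for instance $a_{i+1,j+1}\in\A_{34}$ has row index exceeding $i$), and the ``in particular'' about row~$i$ and column~$j$ does not follow from it either, since row~$i$ has row index equal to $i$, not exceeding it. What you actually use, correctly, is that row~$i$ and column~$j$ lie in neither block (for (1) and (2)), and that $k>i$ together with $l<j$ excludes $\A_{12}$ and $\A_{34}$ respectively (for (3)); just state those facts directly and drop the faulty general claim.
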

\begin{proof}
It is obvious by Definition \ref{defn:A(n)} and Proposition \ref{prop:3}.
\end{proof}

We also have the following obvious results.
\begin{Corollary}
The generators $a_{i, i}$ $(i \in \lbb{1,n})$ and $ a_{1, n}$ are in the center of $\A(n)$.
\end{Corollary}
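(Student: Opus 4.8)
The plan is to read the claim off directly from relation (R1) of Definition \ref{defn:A(n)}, together with the block partition \eqref{eqn-A}. Recall that (R1) asserts that a generator $a_{i,j}$ commutes with every entry of $\A$ except those lying in the two blocks $\A_{12}(i-1,j-i)$ and $\A_{34}(j-i,n-j)$. Thus the whole argument reduces to checking that, for the generators in question, these two exceptional blocks are empty, so that (R1) forces the generator to commute with everything.

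First I would treat the diagonal generators $a_{i,i}$: here $j=i$, so $\A_{12}(i-1,j-i)=\A_{12}(i-1,0)$ is a matrix with zero columns and $\A_{34}(j-i,n-j)=\A_{34}(0,n-i)$ is a matrix with zero rows; both are empty. Hence (R1) leaves no exceptional generator and $a_{i,i}$ commutes with every entry of $\A$, so $a_{i,i}$ is central for each $i\in\lbb{1,n}$. Next I would treat $a_{1,n}$: here $i=1$ and $j=n$, so $\A_{12}(i-1,j-i)=\A_{12}(0,n-1)$ has zero rows and $\A_{34}(j-i,n-j)=\A_{34}(n-1,0)$ has zero columns, again both empty, so $a_{1,n}$ commutes with every entry of $\A$ and is therefore central.

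Since the $a_{k,l}$ with $1\le k\le l\le n$ generate $\A(n)$, commuting with all of them is exactly centrality, which finishes the proof. There is no genuine obstacle here; the only point requiring care is the bookkeeping of block dimensions in \eqref{eqn-A}, namely that a zero value of the relevant size parameter really does collapse the block to the empty matrix. (Alternatively, centrality of the $a_{i,i}$ and of $a_{1,n}$ can be extracted from Corollary \ref{cor:comm} and the remarks preceding it, but the direct reading of (R1) is the shortest route.)
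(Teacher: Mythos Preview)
Your argument is correct and is exactly the reading the paper has in mind: the corollary is stated without proof as an ``obvious'' consequence, and the obvious reason is precisely that for $a_{i,i}$ and $a_{1,n}$ the two exceptional blocks $\A_{12}$ and $\A_{34}$ in (R1) degenerate to empty matrices. There is nothing to add.
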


\begin{Proposition} We have the following filtration of algebras$:$
\begin{equation*}
\A(1) \subseteq \A(2) \subseteq \cdots \subseteq \A(n-1) \subseteq \A(n).
\end{equation*}
\end{Proposition}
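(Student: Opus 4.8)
The plan is to realise every inclusion $\A(n-1)\subseteq\A(n)$ as an injective algebra homomorphism $\iota_n\colon\A(n-1)\to\A(n)$ and then to compose these maps. The obvious candidate for $\iota_n$ sends the generator $a_{i,j}$ of $\A(n-1)$, for $1\le i\le j\le n-1$, to the generator $a_{i,j}$ of $\A(n)$; in other words $\A(n-1)$ should be identified with the subalgebra of $\A(n)$ generated by the entries of the top-left $(n-1)\times(n-1)$ block of the matrix $\A$. Thus there are two things to check: that $\iota_n$ is a well-defined algebra map (the defining relations of $\A(n-1)$ hold among the images), and that it is injective.

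For well-definedness I would simply compare the two presentations. Relation ${\rm (R1)}$ of $\A(n-1)$ says that $a_{i,j}$ commutes with every entry of the $(n-1)\times(n-1)$ block outside $\A_{12}(i-1,j-i)$ and outside the block $\A_{34}(j-i,n-1-j)$; but inside $\A(n)$ the element $a_{i,j}$ already commutes with everything outside $\A_{12}(i-1,j-i)$ and $\A_{34}(j-i,n-j)$, and the latter block, intersected with the columns $\le n-1$, is exactly $\A_{34}(j-i,n-1-j)$, while $\A_{12}(i-1,j-i)$ is unchanged; hence ${\rm (R1)}$ is inherited. For ${\rm (R2)}$ and ${\rm (R3)}$ one notes that each $3\times3$ submatrix $\A_{i,k,j+1}^{k-1,j,l}$ as in \eqref{eqn-A-1} (respectively $\A_{i,k,j+1}^{j,l,m}$ as in \eqref{eqn-A-2}) built from the first $n-1$ columns — i.e.\ with $l\le n-1$ (resp.\ $m\le n-1$) — is literally one of the submatrices occurring in the presentation of $\A(n)$, so the relations \eqref{rela-2} (resp.\ \eqref{rela-3}) imposed in $\A(n-1)$ are among those imposed in $\A(n)$. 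Therefore $\iota_n$ is a homomorphism onto the subalgebra generated by $\{a_{i,j}\colon 1\le i\le j\le n-1\}$.

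The real obstacle is injectivity, since no PBW basis of $\A(n)$ is yet available. My first attempt would be to build a retraction $\pi_n\colon\A(n)\to\A(n-1)$ with $\pi_n\circ\iota_n=\id$; this forces $\pi_n(a_{i,j})=a_{i,j}$ for $j\le n-1$ and leaves the values on the last column $a_{1,n},\dots,a_{n,n}$ to be chosen so as to respect ${\rm (R1)}$--${\rm (R3)}$ for every submatrix that involves a last-column index. Here one must be careful: sending all $a_{i,n}$ to $0$, or to central scalars, fails, because the relation $\bigl[a_{k,n},[a_{i,j},a_{k,n}]_q\bigr]_q=f\bigl(\A_{i,k,j+1}^{k-1,j,n}\bigr)$ would then collapse to $a_{i,j}=a_{i,k-1}a_{k,j}$, which does not hold in $\A(n-1)$. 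So the crux is to find admissible images for the last column — most naturally central elements of $\A(n-1)$ — and to verify ${\rm (R2)}$, ${\rm (R3)}$ for the mixed submatrices; I expect this to be organised by induction on $n$, using Proposition~\ref{prop:3} and Corollary~\ref{cor:comm} to handle the commuting entries. If such a retraction proves awkward to write down explicitly, injectivity of $\iota_n$ follows a posteriori from a PBW-type basis of $\A(n)$ in which the monomials supported on the first $n-1$ columns form a basis of the subalgebra, and also from the isomorphism $\A(n)\cong\aw(n)$ together with the analogous nesting on the $\aw(n)$ side.
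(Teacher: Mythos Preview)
The paper provides no proof of this proposition: it is listed immediately after the sentence ``We also have the following obvious results'' and is left unjustified. So there is nothing substantive to compare your attempt against beyond the implicit claim that the statement is evident.

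Your analysis is in fact more careful than the paper's treatment. Well-definedness of $\iota_n$ is routine for exactly the reason you give: every defining relation of $\A(n-1)$ involves only indices $\le n-1$ and is literally among the defining relations of $\A(n)$. Your worry about injectivity is legitimate, and the paper does not address it. You are correct that the naive retraction sending the last column to zero (or to scalars) kills the (R2) relations with $l=n$, since $f\bigl(\A_{i,k,j+1}^{k-1,j,n}\bigr)$ would collapse to $a_{i,j}-a_{i,k-1}a_{k,j}$; and the paper explicitly defers the PBW basis of $\A(n)$ to future work, so that route is unavailable here. The isomorphism $\A(n)\cong\aw(n)$ proved later does reduce the question to the corresponding embedding $\aw(n-1)\hookrightarrow\aw(n)$ on the Cramp\'e--Frappat--Poulain d'Andecy--Ragoucy side, but the present paper does not invoke that either.

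In short, you have not missed an argument in the paper: you have identified a gap that the paper simply does not fill, treating the filtration as self-evident.
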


\section[The relations between algebra A(n) and aw(n)]{The relations between algebra $\boldsymbol{\A(n)}$ and $\boldsymbol{\aw(n)}$}\label{sect:relations}
In this section, we first recall the concept of the higher-rank Askey--Wilson algebras $\aw(n)$ introduced in \cite{CFPR(2023)}.
Then we explore the relationship between algebras $\A(n)$ and $\aw(n)$.
In fact, we shall see that $\A(n)$ is just isomorphic to the algebra $\aw(n)$.
In other word, we get a~equivalent definition of the higher-rank Askey--Wilson algebras $\aw(n)$.

To see the fact, firstly, let us review the definition of $\aw(n)$ in \cite{CFPR(2023)}.

The following relevant notations and terminology on sets and subsets are used:
\begin{itemize}\itemsep=0pt
 \item For two subsets $I, J \subseteq \{1, \dots, n\}$, we say that $I < J
 $ if and only if
 \begin{equation*}
 i < j , \qquad \forall i \in I, \qquad j \in J.
 \end{equation*}

 \item A non-empty subset $I \subseteq \{1, \dots, n\}$ is said to be connected if it consists in a subset of consecutive integers.

 \item Two disjoint connected subsets $I, J \subseteq \{1, \dots, n\}$ are said to be adjacent if their union is connected.

 \item A hole $H$ between two disjoint connected subsets $I_1$ and $I_2$ means that $H$ consists in the connected subset between $I_1$ and $I_2$, as the picture:
 \begin{equation*}
 \begin{array}{ccccc}
 \dots, \bullet, & \underbrace{ \bullet, \dots, \bullet,} & \underbrace{\bullet, \dots, \bullet,} & \underbrace{\bullet, \dots, \bullet,} & \bullet, \dots\\
 & I_1 & H & I_2 &
 \end{array}
 \end{equation*}
 In this picture and in all the similar pictures below, the integers dotted by $\bullet$'s are ordered either from left to right or from right to left (depending on the respective positions of $I_1$ and $I_2$ in the natural order).
 So such a picture does not mean that $I_1 < I_2$, but $I_1$, $H$, $I_2$ are adjacent connected subsets.
 \item A sequence $(I_1, \dots, I_k)$ of non-empty connected subsets of $\{1, \dots, n\}$ is said monotonic if either $I_1<I_2<\cdots<I_k$ or $I_1> I_2 >\cdots>I_k$.
\end{itemize}

Let us consider two non-empty and disjoint connected subsets $I_1$, $I_2$ of $\{1, \dots, n\}$ with a~non-empty hole $H$ between them:
 \begin{equation*}
 \begin{array}{ccccc}
 \dots, \bullet, & \underbrace{ \bullet, \dots, \bullet,} & \underbrace{\bullet, \dots, \bullet,} & \underbrace{\bullet, \dots, \bullet,} & \bullet, \dots\\
 & I_1 & H & I_2 &
 \end{array}
 \end{equation*}
The element $C_{I_1I_2}$ is defined by
\begin{eqnarray}\label{eqn:C{I_1I_2}}
C_{I_1I_2}:= -{[C_{I_1H}, C_{HI_2}]}_q+C_{I_1}C_{I_2}+C_{H}C_{I_1HI_2}.
\end{eqnarray}

Now we can describe the definition of higher-rank Askey--Wilson algebra $\aw(n)$.

\begin{Definition}[{\cite{CFPR(2023)}}] \label{defn:aw(n)}
The algebra $\aw(n)$ is the unital associative algebra generated by the elements $C_I$, where $I$ is any non-empty connected subset of $\{1, \ldots, n\}$, satisfying the following relations:
\begin{itemize}\itemsep=0pt
 \item for any two connected subsets $I$ and $J$,
$[C_I, C_J]=0$ if $I \cap J=\varnothing$ or $I \subset J$;
 \item for any monotonic sequence of three adjacent non-empty connected subsets $(I_1, I_2, I_3)$,
\begin{equation*}
C_{I_1 I_2}=-[C_{I_2 I_3}, C_{I_1 I_3}]_q+C_{I_1} C_{I_2}+C_{I_3} C_{I_1 I_2 I_3},
\end{equation*}
where $C_{I_1 I_3}$ is defined by $\eqref{eqn:C{I_1I_2}};$
 \item for any monotonic sequence of four adjacent non-empty connected subsets $(I_1, I_2, I_3, I_4)$,
\begin{equation*}
C_{I_1 I_4}=-\left[C_{I_1 I_3}, C_{I_3 I_4}\right]_q+C_{I_1} C_{I_4}+C_{I_3} C_{I_1 I_3 I_4},
\end{equation*}
where $C_{I_1 I_3}$, $C_{I_1 I_4}$ and $C_{I_1 I_3 I_4}$ are defined by $\eqref{eqn:C{I_1I_2}}.$
\end{itemize}
\end{Definition}

\begin{Theorem}
The algebra $\A(n)$ is isomorphic to $\aw(n)$.
\end{Theorem}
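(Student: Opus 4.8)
The plan is to write down the isomorphism explicitly on generators. A non-empty connected subset $I\subseteq\{1,\dots,n\}$ is determined by its minimum and maximum, $I=\lbb{\min I,\max I}$, so sending $I$ to the pair $(\min I,\max I)$ matches the connected subsets of $\{1,\dots,n\}$ bijectively with the positions $(i,j)$, $1\le i\le j\le n$, of the upper-triangular matrix $\A$. Accordingly, define $\phi\colon\A(n)\to\aw(n)$ by $\phi(a_{i,j})=C_{\lbb{i,j}}$ and $\psi\colon\aw(n)\to\A(n)$ by $\psi(C_I)=a_{\min I,\max I}$. On generators these are mutually inverse bijections, and each algebra is generated by the elements just listed, so as soon as both $\phi$ and $\psi$ are shown to be well-defined algebra homomorphisms it follows formally that $\phi\circ\psi=\id$ and $\psi\circ\phi=\id$, whence $\A(n)\cong\aw(n)$. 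One preliminary remark makes the bookkeeping uniform: unwinding $\eqref{eqn:C{I_1I_2}}$ for connected subsets $I_1=\lbb{a,b}$, $H=\lbb{b+1,c}$, $I_2=\lbb{c+1,d}$ shows that $\psi$ carries the defined element $C_{I_1I_2}$ to $-[a_{a,c},a_{b+1,d}]_q+a_{a,b}a_{c+1,d}+a_{b+1,c}a_{a,d}\in\A(n)$, and likewise for the iterated elements $C_{I_1I_3}$, $C_{I_1I_3I_4}$, $C_{I_1I_4}$ appearing in Definition~\ref{defn:aw(n)}; these are exactly the ``$q$-minor'' expressions attached to the relevant $3\times3$ blocks of $\A$.

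To see that $\phi$ is well defined, one verifies that (R1)--(R3) hold for the elements $C_{\lbb{i,j}}$. For (R1): a position $a_{k,l}$ lies outside both $\A_{12}(i-1,j-i)$ and $\A_{34}(j-i,n-j)$ precisely when the intervals $\lbb{k,l}$ and $\lbb{i,j}$ are disjoint or nested, and in either case the first family of relations of $\aw(n)$ gives $[C_{\lbb{i,j}},C_{\lbb{k,l}}]=0$. For (R2) with $i<k\le j<l$: apply the three-subset relation of $\aw(n)$ to the monotonic adjacent triple $\bigl(\lbb{i,k-1},\lbb{k,j},\lbb{j+1,l}\bigr)$; since $C_{\lbb{k,l}}$ commutes with each of $C_{\lbb{i,k-1}}$, $C_{\lbb{k,j}}$, $C_{\lbb{j+1,l}}$, $C_{\lbb{i,l}}$ (all nested with or disjoint from $\lbb{k,l}$), substituting the expression for $C_{I_1I_3}$ from $\eqref{eqn:C{I_1I_2}}$ and rearranging gives exactly $\bigl[C_{\lbb{k,l}},[C_{\lbb{i,j}},C_{\lbb{k,l}}]_q\bigr]_q=\phi\bigl(f(\A_{i,k,j+1}^{k-1,j,l})\bigr)$, and the reversed (decreasing) triple yields the $g$-relation. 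For (R3) with $i<k\le j<l<m$: apply the four-subset relation of $\aw(n)$ to the monotonic adjacent quadruple $\bigl(\lbb{i,k-1},\lbb{k,j},\lbb{j+1,l},\lbb{l+1,m}\bigr)$ and to its reverse; expanding every $q$-commutator, cancelling the common term $C_{\lbb{i,k-1}}C_{\lbb{l+1,m}}$, and reducing by means of $\eqref{equ:comm1}$--$\eqref{equ:comm3}$, the $q$-Jacobi identities $\eqref{eq:jacobi}$--$\eqref{eq:qjacobi3}$, and the auxiliary identities of Lemmas~\ref{lem:eqn-1} and~\ref{lem:eqn-2}, the quadruple relation collapses to $\phi\bigl({\det}_q(\A_{i,k,j+1}^{j,l,m})\bigr)=\phi\bigl({\det}^q(\A_{i,k,j+1}^{j,l,m})\bigr)=0$.

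That $\psi$ is well defined amounts to checking the three families of relations of Definition~\ref{defn:aw(n)} for the generators $a_{\min I,\max I}$. The commutation relations follow at once from (R1) and Corollary~\ref{cor:comm}, since $I\cap J=\varnothing$ or $I\subseteq J$ forces $\lbb{\min I,\max I}$ and $\lbb{\min J,\max J}$ to be disjoint or nested. The three-subset relation, after substituting the $\psi$-image of $C_{I_1I_3}$ recorded above, becomes exactly the instance of (R2) attached to the triple $(I_1,I_2,I_3)$ --- this is the computation of the previous paragraph read in reverse, and it also reproves Proposition~\ref{prop:3}. The four-subset relation, after the same substitutions, becomes exactly the combination of (R1), (R2) and (R3) for the $3\times3$ blocks associated to the quadruple, again via Lemmas~\ref{lem:eqn-1} and~\ref{lem:eqn-2}. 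Since $\phi$ and $\psi$ fix all generators of the respective algebras, they are mutually inverse homomorphisms, and the theorem follows.

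The main obstacle is the block-wise equivalence between the single cubic four-subset identity of $\aw(n)$ and the pair of symmetric ``$q$-determinant'' identities ${\det}_q=0={\det}^q$; this is precisely the point that \cite{CFPR(2023)} settled by computer algebra. The two families of bookkeeping identities in Lemmas~\ref{lem:eqn-1} and~\ref{lem:eqn-2} are tailored to this reduction: each is an elementary consequence of (R1)--(R3) applied to a $3\times3$ submatrix of $\A$, and, fed into the expansion of the four-subset relation, they turn it into a telescoping sum that vanishes. The care required throughout is to track which entries commute --- via Corollary~\ref{cor:comm} --- so that the $q$-brackets may be redistributed using $\eqref{equ:comm1}$--$\eqref{equ:comm3}$; once the commuting pattern is fixed, what remains are applications of the $q$-Jacobi identities $\eqref{eq:jacobi}$--$\eqref{eq:qjacobi3}$.
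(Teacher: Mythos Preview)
Your overall architecture---define $\phi(a_{i,j})=C_{\lbb{i,j}}$ and $\psi(C_I)=a_{\min I,\max I}$, check both are homomorphisms, conclude they are mutually inverse---is exactly what the paper does, and your treatment of (R1) and (R2) matches the paper's.

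The discrepancy is in your handling of (R3). To show that $\phi$ respects (R3) you must verify that $\phi\bigl({\det}_q(\A_{i,k,j+1}^{j,l,m})\bigr)=0$ \emph{in $\aw(n)$}, i.e.\ using only the defining relations of Definition~\ref{defn:aw(n)}. You invoke Lemmas~\ref{lem:eqn-1} and~\ref{lem:eqn-2} here, but those lemmas are proved in $\A(n)$ using (R1)--(R3); they are not available as relations among the $C_I$ until the isomorphism is already established, so appealing to them at this stage is circular. The paper instead performs a short direct computation entirely inside $\aw(n)$: write each $[\,\cdot\,,\,\cdot\,]_q$ appearing in ${\det}_q$ in terms of $C_{I_1I_2},C_{I_1I_3},C_{I_1I_4},C_{I_1I_3I_4}$ via \eqref{eqn:C{I_1I_2}}, use the commutation relations of $\aw(n)$ to pull scalars through, and the expression telescopes in about six steps to $-[C_{I_1I_3},C_{I_3I_4}]_q-C_{I_1I_4}+C_{I_1}C_{I_4}+C_{I_3}C_{I_1I_3I_4}$, which is zero by the four-subset relation. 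The reversed quadruple handles ${\det}^q$. No auxiliary lemmas are needed.

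Relatedly, your final paragraph overstates the difficulty and misplaces the role of Lemmas~\ref{lem:eqn-1} and~\ref{lem:eqn-2}: in the paper they are used later, in Section~\ref{sect:automorphism}, to verify that the maps $\d_i$ are automorphisms; they play no part in the isomorphism $\A(n)\cong\aw(n)$. The equivalence between (R3) and the four-subset relation is not the step that \cite{CFPR(2023)} needed computer algebra for---the paper carries it out by hand in under a page.
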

\begin{proof}
To see this, we define a map $\phi\colon \A(n) \rightarrow \aw(n)$, which assigns $a_{ij}$ to $C_{\{i, i+1, \dots, j\}}$, where~${\{i, i+1, \dots, j\}\subseteq\{1, 2, \dots, n\}}$.

It is evident that the mapping $\phi$ is well-defined.
In the following, we show that $\phi$ is indeed an algebra homomorphism.

(a) $\phi$ keeps the relations (R1) in Definition \ref{defn:A(n)}.
For a given generator $a_{i,j}\in \A$, if there exists $a_{k,l}\in \A$ such that $\lr{a_{i,j}, a_{k,l}}=0$, then $a_{k,l}\not\in \A_{12}(i-1,j-i)\cup \A_{34}(j-i,n-j)$.
Now, we set
\begin{equation*}
I_1=\{i, i+1, \dots, j\}, \qquad I_2=\{k, k+1, \dots, l\}.
\end{equation*}
If $a_{k,l}$ belongs to $\A_{11}(n-j,n-j) \hbox{ or } \A_{44}(i-1,i-1)$, then
$
I_1\cap I_2=\varnothing$.
If $a_{k,l} \in \A_{14}(i-1,n-j)$, we have
$
I_1 \subseteq I_2$.
If $a_{k,l} \in \A_{32}(j-i,j-i)$, we have
$
I_2 \subseteq I_1$.
In addition, if $a_{k,l}$ is located on the same row or column as $a_{i,j}$, then
$
I_1 \subseteq I_2 \hbox{ or } I_2 \subseteq I_1$.
Therefore, in all the aforementioned cases, we have
\begin{gather*}
 \phi(a_{i, j})\phi(a_{k, l})-\phi(a_{k, l})\phi(a_{i, j})
 =C_{I_1}C_{I_2}-C_{I_2}C_{I_1}
 =0.
\end{gather*}

(b) $\phi$ keeps the relations (R2) in Definition \ref{defn:A(n)}.
Recall that \smash{$\A_{i,k,j+1}^{k-1,j,l}$} as in $\eqref{eqn-A-1}$, we have $ i < k \leq j < l $ and define
 \begin{equation*}
 I_1=\{i, i+1, \dots, k-1\}, \qquad I_2=\{k, k+1, \dots, j\}, \qquad I_3=\{j+1, \dots, l\}.
 \end{equation*}
The sequences $(I_1, I_2, I_3)$ and $(I_3,I_2,I_1)$ are two monotonic sequences.

Accordingly, we have
 \begin{gather*}
 [\phi(a_{k, l}), {\lr{\phi(a_{i, j}), \phi(a_{k, l})}}_q]_q\\
 \qquad=[ C_{I_{2}I_3}, \lr{ C_{I_{1}I_2} , C_{I_{2}I_3}}_q ]_q
 =\lr{ C_{I_{2}I_3}, -C_{I_1I_3}+C_{I_1}C_{I_3}+C_{I_2}C_{I_1I_2I_3} }_q\\
\qquad =-\lr{ C_{I_{2}I_3}, C_{I_1I_3}}_q+C_{I_{2}I_3} C_{I_1}C_{I_3} + C_{I_{2}I_3}C_{I_2}C_{I_1I_2I_3} \\
 \qquad= C_{I_1I_2}-C_{I_1}C_{I_2}-C_{I_3}C_{I_1I_2I_3}+C_{I_{2}I_3} C_{I_1}C_{I_3} + C_{I_{2}I_3}C_{I_2}C_{I_1I_2I_3} \\
\qquad =\phi\lrb{a_{i, j}}-\phi\lrb{a_{i, k-1}}\phi\lrb{a_{k, j}}-\phi\lrb{a_{i, l}}\phi\lrb{a_{j+1, l}}\\
\phantom{ \qquad=}{} + \phi\lrb{a_{k, l}} \lrb{\phi\lrb{a_{i, k-1}}\phi\lrb{a_{j+1, l}} +\phi\lrb{ a_{k, j }} \phi\lrb{a_{i, l}}},
 \end{gather*}
and
\begin{gather*}
[\phi\lrb{a_{i, j}}, {\lr{\phi\lrb{a_{k, l}}, \phi\lrb{a_{i, j}}}}_q]_q\\
 \qquad=[ C_{I_{2}I_1}, \lr{ C_{I_{3}I_2} , C_{I_{2}I_1}}_q ]_q
 =\lr{ C_{I_{2}I_1}, -C_{I_3I_1}+C_{I_3}C_{I_1}+C_{I_2}C_{I_3I_2I_1} }_q\\
\qquad =-\lr{ C_{I_{2}I_1}, C_{I_3I_1}}_q+C_{I_{2}I_1} C_{I_3}C_{I_1} + C_{I_{2}I_1}C_{I_2}C_{I_3I_2I_1} \\
\qquad = C_{I_3I_2}-C_{I_3}C_{I_2}-C_{I_1}C_{I_3I_2I_1} +C_{I_{2}I_1} C_{I_3}C_{I_1} + C_{I_{2}I_1}C_{I_2}C_{I_3I_2I_1} \\
 \qquad=\phi\lrb{a_{k, l}}-\phi\lrb{a_{i, k-1}}\phi\lrb{a_{i, l}}-\phi\lrb{a_{k, j}}\phi\lrb{a_{j+1, l}}\\
\phantom{ \qquad=}{} + \phi\lrb{a_{i, j}}\lrb{\phi\lrb{a_{i, k-1}}\phi\lrb{a_{j+1, l}} + \phi\lrb{a_{k, j }}\phi\lrb{a_{i, l}}}.
 \end{gather*}
Hence,
\begin{gather*}
[\phi\lrb{a_{k, l}}, {\lr{\phi\lrb{a_{i, j}}, \phi\lrb{a_{k,l}}}}_q]_q=f\bigl[\phi\bigl(\A_{i,k,j+1}^{k-1,j,l}\bigr)\bigr],\\
[\phi\lrb{a_{i, j}}, {\lr{\phi\lrb{a_{k, l}}, \phi\lrb{a_{i, j}}}}_q]_q=g\bigl[\phi\bigl(\A_{i,k,j+1}^{k-1,j,l}\bigr)\bigr].
\end{gather*}

(c) $\phi$ keeps the relations (R3) in Definition \ref{defn:A(n)}:

Let us consider \smash{$\A_{i,k,j+1}^{j,l,m}$} as in $\eqref{eqn-A-2}$, we must have
$ i < k \leq j < l < m $. Set
\begin{gather*}
I_1=\{i, i+1, \dots, k-1\}, \qquad I_2=\{k, k+1, \dots, j\},\\
 I_3=\{ j+1, \dots, l\},\qquad I_4=\{l+1, \dots, m\}.
\end{gather*}
Then, $(I_1, I_2, I_3, I_4)$ and $(I_4, I_3, I_2, I_1)$ are two monotonic sequences and we have
 \begin{gather*}
 [ [\phi(a_{i, j}), \phi(a_{k, l}) ]_q, \phi(a_{j+1, m}) ]_q+ [\phi(a_{i,l}),\phi(a_{k,m}) ]_q+ [ [\phi(a_{i, m}), \phi(a_{k, j}) ]_q, \phi(a_{j+1, l}) ]_q \\
 \qquad - [ [ \phi(a_{i, l }), \phi(a_{k,j}) ]_q, \phi(a_{j+1, m}) ]_q - [ [ \phi(a_{i, j}), \phi(a_{k, m}) ]_q, \phi(a_{j+1, l}) ]_q - [\phi(a_{i, m}), \phi(a_{k, l}) ]_q\\
\phantom{\qquad\qquad}{} = [ [C_{I_1I_2}, C_{I_2I_3} ]_q, C_{I_3I_4} ]_q+ [C_{I_1I_2I_3},C_{I_2I_3I_4} ]_q+ [ [C_{I_1I_2I_3I_4}, C_{I_2} ]_q, C_{I_3} ]_q \\
 \phantom{\qquad\qquad=}{} - [ [ C_{I_1I_2I_3}, C_{I_2} ]_q, C_{I_3I_4} ]_q - [ [ C_{I_1I_2}, C_{I_2I_3I_4} ]_q, C_{I_3} ]_q - [C_{I_1I_2I_3I_4}, C_{I_2I_3} ]_q\\
\phantom{\qquad\qquad}{} = [ [C_{I_1I_2}, C_{I_2I_3} ]_q-C_{I_2}C_{I_1I_2I_3}, C_{I_3I_4} ]_q
+ [C_{I_1I_2I_3},C_{I_2I_3I_4} ]_q \\
 \phantom{\qquad\qquad=}{} + C_{I_3} \lrb{ C_{I_2} C_{I_1I_2I_3I_4}- [ C_{I_1I_2}, C_{I_2I_3I_4} ]_q }- C_{I_1I_2I_3I_4} C_{I_2I_3}\\
 \phantom{\qquad\qquad}{} = [-C_{I_1I_3}+C_{I_1}C_{I_3},C_{I_3I_4} ]_q + [C_{I_1I_2I_3},C_{I_2I_3I_4} ]_q + C_{I_3} \lrb{C_{I_1I_3I_4} -C_{I_1} C_{I_3I_4} }\\
 \phantom{\qquad\qquad=}{}- C_{I_1I_2I_3I_4} C_{I_2I_3}\\
 \phantom{\qquad\qquad}{} = - [C_{I_1I_3},C_{I_3I_4} ]_q + [C_{I_1I_2I_3},C_{I_2I_3I_4} ]_q- C_{I_1I_2I_3I_4} C_{I_2I_3} + C_{I_3} C_{I_1I_3I_4} \\
 \phantom{\qquad\qquad}{} = - [C_{I_1I_3},C_{I_3I_4} ]_q -C_{I_1I_4}+C_{I_1} C_{I_4} + C_{I_3} C_{I_1I_3I_4}\\
 \phantom{\qquad\qquad}{} = 0.
\end{gather*}
We also have
 \begin{gather*}
 [ [\phi(a_{j+1, m}), \phi(a_{k, l}) ]_q, \phi(a_{i, j}) ]_q+ [\phi(a_{k,m}), \phi(a_{i,l}) ]_q + [ [\phi(a_{j+1, l}), \phi(a_{k, j}) ]_q, \phi(a_{i, m}) ]_q \\
 \qquad {}- [ [ \phi(a_{j+1, m}), \phi(a_{k, j}) ]_q, \phi(a_{i, l}) ]_q - [ [\phi(a_{j+1, l}), \phi(a_{k, m}) ]_q, \phi(a_{i, j}) ]_q- [\phi(a_{k, l}), \phi(a_{i, m}) ]_q\\
 \phantom{\qquad\qquad}{} = [ [C_{I_3I_4}, C_{I_2I_3} ]_q, C_{I_1I_2} ]_q+ [C_{I_2I_3I_4}, C_{I_1I_2I_3} ]_q + [ [C_{I_3}, C_{I_2} ]_q, C_{I_1I_2I_3I_4} ]_q \\
 \phantom{\qquad\qquad=}{} - [ [ C_{I_3I_4}, C_{I_2} ]_q, C_{I_1I_2I_3} ]_q - [ [C_{I_3}, C_{I_2I_3I_4} ]_q, C_{I_1I_2} ]_q- [C_{I_2I_3}, C_{I_1I_2I_3I_4} ]_q\\
 \phantom{\qquad\qquad}{} = [ [C_{I_3I_4}, C_{I_2I_3} ]_q-C_{I_3} C_{I_2I_3I_4}, C_{I_1I_2} ]_q+ [C_{I_2I_3I_4}, C_{I_1I_2I_3} ]_q \\
 \phantom{\qquad\qquad=}{} +C_{I_2}\lrb{C_{I_3}C_{I_1I_2I_3I_4}- [ C_{I_3I_4}, C_{I_1I_2I_3} ]_q }-C_{I_2I_3} C_{I_1I_2I_3I_4}\\
 \phantom{\qquad\qquad}{} = [-C_{I_4I_2}+C_{I_2} C_{I_4}, C_{I_1I_2} ]_q+ [C_{I_2I_3I_4}, C_{I_1I_2I_3} ]_q +C_{I_2}\lrb{C_{I_4I_2I_1}-C_{I_4}C_{I_1I_2}}\\
 \phantom{\qquad\qquad=}{} -C_{I_2I_3} C_{I_1I_2I_3I_4}\\
 \phantom{\qquad\qquad}{} = - [C_{I_4I_2}, C_{I_1I_2} ]_q +C_{I_2}C_{I_4I_2I_1}+ \lrb{ [C_{I_2I_3I_4}, C_{I_1I_2I_3} ]_q -C_{I_2I_3} C_{I_1I_2I_3I_4}}\\
 \phantom{\qquad\qquad}{} = - [C_{I_4I_2}, C_{I_1I_2} ]_q +C_{I_2}C_{I_4I_2I_1}- C_{I_4I_1}+C_{I_1}C_{I_4}\\
 \phantom{\qquad\qquad}{} = 0.
 \end{gather*}
Hence,
\begin{gather*}
{\det}_q\bigl(\phi\bigl(\A_{i,k,j+1}^{j,l,m}\bigr)\bigr)={\det}^q\bigl(\phi\bigl(\A_{i,k,j+1}^{j,l,m}\bigr)\bigr)=0.
\end{gather*}

The above statements mean that $\phi$ is indeed an algebra homomorphism.

Furthermore, we define a map $\varphi\colon \aw(n) \rightarrow \A(n)$, which sends generator $C_I$ of $\aw(n)$ to~$a_{i_I, j_I}$, where $i_I=\min I$ and $j_I=\max I$.

Now, we have $i \leq j$ and $I=\{i, i+1, \dots, j\}$ be a non-empty connected subset of $\{1, 2, \dots, n\}$.
Similarly, it is straightforward to see that $\varphi$ is an algebra homomorphism.
Also, $\varphi \phi\lrb{a_{i, j}}=\varphi\lrb{C_I}=a_{i, j}$ and $\phi \varphi \lrb{C_I}=\phi\lrb{a_{i, j}}=C_I$.
It follows that $\phi$ is an isomorphism of algebras.

Thus $\A(n) \cong \aw(n)$
and the proof is finished.
\end{proof}

\section[Automorphisms of A(n)]{Automorphisms of $\boldsymbol{\A(n)}$}\label{sect:automorphism}

In this section, we construct a series of automorphisms of $\A(n)$,
which coincide with those of~$\aw(n)$ in \cite{CFPR(2023)}.
The proofs of the results are sketch in \cite{CFPR(2023)} and slightly difficult to handle.
Here we give the proofs in detail.

For the upper triangular generating matrix $\A=(a_{ij})_{n\times n}$ of the algebra $\A(n)$ and the map $f\colon \A(n)\to \A(n)$, the notation $f\lrb{\A}$ is defined by the upper triangular matrix $(f\lrb{\A}_{i,j})_{n\times n}$, where $f\lrb{\A}_{i,j}:=f\lrb{a_{i,j}}$.
For convenience, we denote
\begin{gather}
 {a_{1,j}}^{\d_0}\overset{\bigtriangleup}{=} -\lq{a_{2,n}, a_{1,j}}+a_{j+1,n}a_{1,1}+a_{2,j}a_{1,n}, \label{eqn:r0} \\
 {a_{1,j}}^{\d_0'}\overset{\bigtriangleup}{=} -\lq{a_{1,j}, a_{2,n}}+a_{j+1,n}a_{1,1}+a_{2,j}a_{1,n}, \\
 {a_{k,i}}^{\d_i}\overset{\bigtriangleup}{=} -\lq{a_{i,i+1}, a_{k,i}}+a_{i+1,i+1}a_{k,i-1}+a_{i,i}a_{k,i+1}, \label{eqn:ri} \\
 {a_{i+1, l}}^{\d_i}\overset{\bigtriangleup}{=} -\lq{a_{i,i+1}, a_{i+1, l}}+a_{i,i}a_{i+2,l}+a_{i+1,i+1}a_{i,l}, \label{eqn:ri+1}\\
 {a_{k,i}}^{\d_i'}\overset{\bigtriangleup}{=} -\lq{a_{k,i}, a_{i,i+1}}+a_{i+1,i+1}a_{k,i-1}+a_{i,i}a_{k,i+1}, \\
 {a_{i+1, l}}^{\d_i'}\overset{\bigtriangleup}{=} -\lq{a_{i,i+1}, a_{i+1, l}} +a_{i,i}a_{i+2,l} +a_{i+1,i+1}a_{i,l},
\end{gather}
where $1\leq i\leq n-1$, $2\leq j\leq n-1$, $1 \leq k \leq i-1$ and $i+1<l\leq n-1.$

Now, let $\d_0, \d_0'\colon \A(n) \rightarrow \A(n)$ the maps be given by
\begin{gather*}
\d_0(\A)=
 \begin{pmatrix}
 a_{1,n} &{a_{1,2}}^{\d_0} & \cdots & {a_{1,j}}^{\d_0}& \cdots &{a_{1,n-1}}^{\d_0}& a_{1,1} \\[1pt]
 & a_{2,2} & \cdots & a_{2,j} & \cdots &a_{2,n-1} &a_{2,n}\\[1pt]
 & & \ddots & \vdots & \vdots & \vdots & \vdots \\[1pt]
 & & & a_{j,j} & \cdots &a_{j,n-1} &a_{j,n}\\[1pt]
 & & & & \ddots & \vdots & \vdots \\[1pt]
 & & & & & a_{n-1,n-1}& a_{n-1,n}\\[1pt]
 & & & & & & a_{n,n}
 \end{pmatrix}
, \\
\d_0'(\A)=
 \begin{pmatrix}
 a_{1,n} & {a_{1,2}}^{\d_0'} & \cdots & {a_{1,j}}^{\d_0'}& \cdots &{a_{1,n-1}}^{\d_0'} & a_{1,1} \\[1pt]
 & a_{2,2} & \cdots & a_{2,j} & \cdots &a_{2,n-1} &a_{2,n}\\[1pt]
 & & \ddots & \vdots & \vdots & \vdots & \vdots \\[1pt]
 & & & a_{j,j} & \cdots &a_{j,n-1} &a_{j,n}\\[1pt]
 & & & & \ddots & \vdots & \vdots \\[1pt]
 & & & & & a_{n-1,n-1}& a_{n-1,n}\\[1pt]
 & & & & & & a_{n,n}
 \end{pmatrix},
\end{gather*}
the map $\d_i, \d_i'\colon \A(n) \rightarrow \A(n)$ ($i \in \lbb{1,n-1}$) be given by
\begin{gather*}
\d_i(\A)=
\begin{pmatrix}
 a_{1,1} &a_{1,2} & \cdots & {a_{1,i}}^{\d_i} & a_{1,i+1} & a_{1,i+2}& \cdots & a_{1,n} \\
 &\ddots & &\vdots & \vdots & \vdots & \vdots & \vdots\\
 & & & {a_{i-1,i}}^{\d_i} & a_{i-1,i+1} & a_{1-1,i+2}& \cdots & a_{i-1,n} \\
 & & &a_{i+1,i+1}& a_{i,i+1} & a_{i,i+2} &\cdots & a_{i,n} \\
 & & & & a_{i,i} & {a_{i+1,i+2}}^{\d_i}& \cdots & {a_{i+1,n}}^{\d_i} \\
 & & & & & a_{i+2,i+2}& \cdots & a_{i+2,n}\\
 & & & & & &\ddots & \vdots\\
 & & & & & & & a_{n,n}
\end{pmatrix}
,\\
\d_i'(\A)=
\begin{pmatrix}
 a_{1,1} &a_{1,2} & \cdots & {a_{1,i}}^{\d'_i} & a_{1,i+1} & a_{1,i+2}& \cdots & a_{1,n} \\
 &\ddots & &\vdots & \vdots & \vdots & \vdots & \vdots\\
 & & & {a_{i-1,i}}^{\d'_i} & a_{i-1,i+1} & a_{1-1,i+2}& \cdots & a_{i-1,n} \\
 & & &a_{i+1,i+1}& a_{i,i+1} & a_{i,i+2} &\cdots & a_{i,n} \\
 & & & & a_{i,i} & {a_{i+1,i+2}}^{\d'_i}& \cdots & {a_{i+1,n}}^{\d'_i} \\
 & & & & & a_{i+2,i+2}& \cdots & a_{i+2,n}\\
 & & & & & &\ddots & \vdots\\
 & & & & & & & a_{n,n}
\end{pmatrix}
.
\end{gather*}

For the maps $\delta_0$, $ \delta_0'$, the first row $a_{1,j}$ $(2\leq j\leq n-1)$ of $\A$ are mapping to ${a_{1,j}}^{\delta_0}$ \big(resp.~${a_{1,j}}^{\delta_0'}$\big), $\delta_0(a_{1,1})=a_{1,n}$, $\delta_0(a_{1,n})=a_{1, 1}$, and the other generators are fixed.
Similarly, for the maps~$\d_i$,~$\d_i'$ $(i \in \lbb{1,n-1})$, the generators of $\A$ are fixed except those of the $(i+1)$-th row and the $i$-th column.

Subsequently, we aim to demonstrate that the maps~$\d_i$,~$\d_i'$ $(i \in \lbb{0,n-1}$) defined in this way are automorphisms and satisfy the braid group relations.
We begin by introducing two lemmas.

\begin{Lemma}\quad
\begin{enumerate}\itemsep=0pt
\item[$(1)$] For \smash{$\A_{1,k,i+1}^{i,j,m} $} as $\eqref{eqn-A-2}$, we have
\begin{gather}
\bigl[{a_{1,i}}^{\d_0},{a_{1,k-1}}^{\d_0}]_q =a_{1,i}a_{1,k-1}-a_{1,1}\lr{a_{2,i},a_{1,k-1}}_q -a_{i+1,n}a_{1,n}a_{1,k-1} \nonumber\\
\phantom{\bigl[{a_{1,i}}^{\d_0},{a_{1,k-1}}^{\d_0}\bigr]_q =}{}+a_{1,1}\bigl[{a_{1,i}}^{\d_0},a_{k,n}\bigr]_q
+a_{2,k-1}{a_{1,i}}^{\d_0}a_{1,n}\nonumber\\
\phantom{\bigl[{a_{1,i}}^{\d_0},{a_{1,k-1}}^{\d_0}\bigr]_q =}{}+\frac{1}{\bigl(q-q^{-1}\bigr)^2}\bigl[a_{2,n}, \bigl[a_{1,k-1}, {a_{1,i}}^{\d_0}\bigr]\bigr] , \label{r0-1}\\
\bigl[{a_{1,i}}^{\d_0},{a_{1,j}}^{\d_0} \bigr]_q =a_{1,i}a_{1,j} -a_{1,1}a_{2,i}a_{1,j} -a_{1,n}\lr{a_{i+1,n},a_{1,j}}_q +a_{1,1}{a_{1,i}}^{\d_0}a_{j+1,n} \nonumber\\
\phantom{\bigl[{a_{1,i}}^{\d_0},{a_{1,j}}^{\d_0} \bigr]_q =}{}
+a_{1,n}\bigl[{a_{1,i}}^{\d_0},a_{2,j}\bigr]_q+\frac{1}{\bigl(q-q^{-1}\bigr)^2}\bigl[a_{2,n}, \bigl[a_{1,j}, {a_{1,i}}^{\d_0}\bigr]\bigr],\label{r0-2}\\
\bigl[{a_{1,k-1}}^{\d_0},{a_{1,j}}^{\d_0}\bigr]_q =a_{1,k-1}a_{1,j}-a_{1,1}a_{2,k-1}a_{1,j} -a_{1,n}\lr{a_{k,n},a_{1,j}}_q +a_{1,1}{a_{1,k-1}}^{\d_0}a_{j+1,n} \nonumber\\
\phantom{\bigl[{a_{1,k-1}}^{\d_0},{a_{1,j}}^{\d_0}\bigr]_q =}{}
+a_{1,n}\bigl[{a_{1,k-1}}^{\d_0},a_{2,j}\bigr]_q+\frac{1}{\bigl(q-q^{-1}\bigr)^2}\bigl[a_{2,n}, \bigl[a_{1,j}, {a_{1,k-1}}^{\d_0}\bigr]\bigr], \label{r0-3}\\
\bigl[{a_{1,m}}^{\d_0},{a_{1,j}}^{\d_0}\bigr]_q
=a_{1,m}a_{1,j}- a_{1,1}\lr{a_{2,m},a_{1,j}}_q -a_{1,n}a_{m+1,n}a_{1,j} +a_{1,1}\bigl[{a_{1,m}}^{\d_0},a_{j+1,n}\bigr]_q \nonumber\\
\phantom{\bigl[{a_{1,m}}^{\d_0},{a_{1,j}}^{\d_0}\bigr]_q
=a}{}
+a_{1,n}{a_{1,m}}^{\d_0}a_{2,j}+\frac{1}{\bigl(q-q^{-1}\bigr)^2}\bigl[a_{2,n}, \bigl[a_{1,j}, {a_{1,m}}^{\d_0}\bigr]\bigr],\nonumber\\
\bigl[{a_{1,m}}^{\d_0},{a_{1,i}}^{\d_0}\bigr]_q
=a_{1,m}a_{1,i} -a_{1,1}\lr{a_{2,m},a_{1,i}}_q -a_{1,n}a_{m+1,n}a_{1,i} +a_{1,1}\bigl[{a_{1,m}}^{\d_0},a_{i+1,n}\bigr]_q \nonumber\\
\phantom{\bigl[{a_{1,m}}^{\d_0},{a_{1,i}}^{\d_0}\bigr]_q
=}{}
+a_{1,n}{a_{1,m}}^{\d_0}a_{2,i}+\frac{1}{\bigl(q-q^{-1}\bigr)^2}\bigl[a_{2,n}, \bigl[a_{1,i}, {a_{1,m}}^{\d_0}\bigr]\bigr].\nonumber
\end{gather}
\item[$(2)$] For \smash{$\A_{j,k,i+1}^{k-1,i,l}$} as $\eqref{eqn-A-1}$, we have
\begin{gather}
\bigl[{a_{k,i}}^{\d_i},{a_{i+1,l}}^{\d_i}\bigr]_q
=a_{k,i}a_{i+1,l}-a_{k,i-1}a_{i,i}a_{i+1,l} +{a_{k,i}}^{\d_i}a_{i,i}a_{i+2,l}\nonumber \\
\phantom{\bigl[{a_{k,i}}^{\d_i},{a_{i+1,l}}^{\d_i}\bigr]_q
=}{}-a_{i+1,i+1}\lr{a_{k,i+1},a_{i+1,l}}_q
+a_{i+1,i+1}\bigl[{a_{k,i}}^{\d_i},a_{i,l}\bigr]_q\nonumber \\
\phantom{\bigl[{a_{k,i}}^{\d_i},{a_{i+1,l}}^{\d_i}\bigr]_q
=}{} +\frac{1}{\bigl(q-q^{-1}\bigr)^2}\bigl[a_{i,i+1},\bigl[a_{i+1,l},{a_{k,i}}^{\d_i}\bigr]\bigr], \label{ri-1} \\
\bigl[{a_{j,i}}^{\d_i},{a_{k,i}}^{\d_i}\bigr]_q=a_{j,i}a_{k,i} -a_{i+1,i+1}a_{j,i+1}a_{k,i}-a_{i,i}\lr{a_{j,i-1},a_{k,i}}_q +{a_{j,i}}^{\d_i}a_{k,i-1}a_{i+1,i+1}\nonumber \\
\phantom{\bigl[{a_{j,i}}^{\d_i},{a_{k,i}}^{\d_i}\bigr]_q=}{}
+a_{i,i}\bigl[{a_{j,i}}^{\d_i}, a_{k,i+1}]_q +\frac{1}{\bigl(q-q^{-1}\bigr)^2}\bigl[a_{i,i+1},\bigl[a_{k,i},{a_{j,i}}^{\d_i}\bigr]\bigr], \label{ri-2} \\
\bigl[{a_{j,i}}^{\d_i}, {a_{i+1,l}}^{\d_i}\bigr]_q = a_{j,i}a_{i+1,l} -a_{j,i-1}a_{i,i}a_{i+1,l}+{a_{j,i}}^{\d_i}a_{i,i}a_{i+2,l} -a_{i+1,i+1}[a_{j,i+1},a_{i+1,l}]_q\nonumber \\
\phantom{\bigl[{a_{j,i}}^{\d_i}, {a_{i+1,l}}^{\d_i}\bigr]_q = }{}
+a_{i+1,i+1}\bigl[{a_{j,i}}^{\d_i},a_{i,l}\bigr]_q +\frac{1}{\bigl(q-q^{-1}\bigr)^2}\bigl[a_{i,i+1},\bigl[a_{i+1,l},{a_{j,i}}^{\d_i}\bigr]\bigr]. \label{ri-3}
\end{gather}
\item[$(3)$] For \smash{$\A_{j,k,i+1}^{i,l,m}$} as $\eqref{eqn-A-2}$, we have
\begin{gather}
\bigl[{a_{k,i}}^{\d_i},{a_{i+1,m}}^{\d_i} \bigr]_q=a_{k,i}a_{i+1,m} -a_{k,i-1}a_{i,i}a_{i+1,m}+{a_{k,i}}^{\d_i}a_{i,i}a_{i+2,m}\nonumber \\
\phantom{\bigl[{a_{k,i}}^{\d_i},{a_{i+1,m}}^{\d_i} \bigr]_q=}{}
 -a_{i+1,i+1}\lr{a_{k,i+1},a_{i+1,m}}_q
+a_{i+1,i+1}\bigl[{a_{k,i}}^{\d_i},a_{i,m}\bigr]_q
\nonumber \\
\phantom{\bigl[{a_{k,i}}^{\d_i},{a_{i+1,m}}^{\d_i} \bigr]_q=}{}
+\frac{1}{\bigl(q-q^{-1}\bigr)^2}\bigl[a_{i,i+1},\bigl[a_{i+1,m},{a_{k,i}}^{\d_i}\bigr]\bigr], \label{ri-4} \\
\bigl[{a_{j,i}}^{\d_i},{a_{i+1,m}}^{\d_i}\bigr]_q=a_{j,i}a_{i+1,m} -a_{j,i-1}a_{i,i}a_{i+1,m}+{a_{j,i}}^{\d_i}a_{i,i}a_{i+2,m}\nonumber \\
\phantom{\bigl[{a_{j,i}}^{\d_i},{a_{i+1,m}}^{\d_i}\bigr]_q=}{}
 -a_{i+1,i+1}\lr{a_{j,i+1},a_{i+1,m}}_q+a_{i+1,i+1}\bigl[{a_{j,i}}^{\d_i},a_{i,m}\bigr]_q
 \nonumber \\
\phantom{\bigl[{a_{j,i}}^{\d_i},{a_{i+1,m}}^{\d_i}\bigr]_q=}{}
+\frac{1}{\bigl(q-q^{-1}\bigr)^2}\bigl[a_{i,i+1},\bigl[a_{i+1,m},{a_{j,i}}^{\d_i}\bigr]\bigr].\nonumber
\end{gather}
\end{enumerate}
\end{Lemma}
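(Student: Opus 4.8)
The plan is to verify each of the identities by a direct computation inside $\A(n)$, relying only on the elementary $q$-Jacobi relations \eqref{eq:jacobi}--\eqref{equ:comm3}, on the commutation relations of Definition \ref{defn:A(n)} together with Corollary \ref{cor:comm} and the centrality of the $a_{s,s}$ and of $a_{1,n}$, and on the defining relations (R2), (R3). I would treat \eqref{r0-1} in full detail; all the remaining identities of parts (1)--(3) are then obtained by the same recipe, with $a_{2,n}$ replaced by the relevant ``hole'' generator (namely $a_{i,i+1}$ for the maps $\d_i$), and with the roles of the central elements $a_{1,1}$, $a_{1,n}$ taken over by $a_{i,i}$, $a_{i+1,i+1}$.

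First I would insert the definition \eqref{eqn:r0} of ${a_{1,k-1}}^{\d_0}$ into the left-hand side of \eqref{r0-1} and expand the $q$-bracket $[{a_{1,i}}^{\d_0},-]_q$ by $\K$-linearity into three summands. The summand arising from the term $a_{k,n}a_{1,1}$ becomes $a_{1,1}[{a_{1,i}}^{\d_0},a_{k,n}]_q$ after pulling out the central $a_{1,1}$ via \eqref{equ:comm1}; the summand arising from $a_{2,k-1}a_{1,n}$ becomes $a_{2,k-1}{a_{1,i}}^{\d_0}a_{1,n}$ once one checks, using Definition \ref{defn:A(n)} and Corollary \ref{cor:comm}, that $a_{2,k-1}$ commutes with every generator occurring in ${a_{1,i}}^{\d_0}$. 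Both terms already coincide with the corresponding terms on the right-hand side of \eqref{r0-1}.

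The remaining summand $-[{a_{1,i}}^{\d_0},[a_{2,n},a_{1,k-1}]_q]_q$ is the heart of the argument. I would apply \eqref{eq:qjacobi1} with $A={a_{1,i}}^{\d_0}$, $B=a_{2,n}$, $C=a_{1,k-1}$ to rewrite it as $-\bigl[[{a_{1,i}}^{\d_0},a_{2,n}]_q,a_{1,k-1}\bigr]_q+\frac{1}{(q-q^{-1})^2}\bigl[a_{2,n},[a_{1,k-1},{a_{1,i}}^{\d_0}]\bigr]$, the second summand of which is exactly the correction term on the right-hand side of \eqref{r0-1}. It then remains to evaluate $[{a_{1,i}}^{\d_0},a_{2,n}]_q$: inserting \eqref{eqn:r0} once more, the nested piece $-\bigl[[a_{2,n},a_{1,i}]_q,a_{2,n}\bigr]_q$ becomes $-\bigl[a_{2,n},[a_{1,i},a_{2,n}]_q\bigr]_q$ by \eqref{equ:comm2} (taken with $A=B=a_{2,n}$), which is an instance of (R2) for the sub-block $\A_{1,2,i+1}^{1,i,n}$ and so equals $-f\bigl(\A_{1,2,i+1}^{1,i,n}\bigr)$; after the product terms cancel against one another using centrality of $a_{1,1}$, $a_{1,n}$ and the fact that $a_{i+1,n}$, $a_{2,i}$ commute with $a_{2,n}$, one is left with the compact formula $[{a_{1,i}}^{\d_0},a_{2,n}]_q=a_{1,1}a_{2,i}+a_{1,n}a_{i+1,n}-a_{1,i}$. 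Taking the $q$-bracket of this with $a_{1,k-1}$, once more pulling out $a_{1,1}$ and $a_{1,n}$ and using $[a_{i+1,n},a_{1,k-1}]=[a_{1,i},a_{1,k-1}]=0$, delivers precisely the remaining terms $a_{1,i}a_{1,k-1}-a_{1,1}[a_{2,i},a_{1,k-1}]_q-a_{i+1,n}a_{1,n}a_{1,k-1}$, which completes \eqref{r0-1}.

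For the rest of part (1) the same scheme applies with $a_{1,k-1}$, $a_{1,j}$, $a_{1,m}$ played off against one another; for parts (2), (3) one replaces $a_{2,n}$ by $a_{i,i+1}$ and $a_{1,1}$, $a_{1,n}$ by $a_{i,i}$, $a_{i+1,i+1}$, the relevant one-hole evaluation being $[{a_{k,i}}^{\d_i},a_{i,i+1}]_q=a_{i,i}a_{k,i-1}+a_{i+1,i+1}a_{k,i+1}-a_{k,i}$, obtained from (R2) applied to $\A_{k,i,i+1}^{i-1,i,i+1}$ (and similarly for the $a_{i+1,l}$-variant). The three-column blocks occurring in part (3) may in addition require (R3) and the auxiliary identities of Lemma \ref{lem:eqn-1}. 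I expect the main obstacle to be bookkeeping rather than conceptual: one must attach each $q$-commutator to the precise $3\times3$ sub-block whose index constraints it satisfies, propagate the powers of $q$ correctly through \eqref{eq:qjacobi1}--\eqref{equ:comm3}, and --- in parts (2), (3) --- recognise those leftover brackets (such as $[a_{k,i+1},a_{i+1,l}]_q$) that do not collapse to products of generators but instead reappear verbatim on the right-hand side. Once the one-hole evaluations $[{a_{1,j}}^{\d_0},a_{2,n}]_q$ and $[{a_{k,i}}^{\d_i},a_{i,i+1}]_q$ are established in general, each of the remaining identities reduces to a routine, if lengthy, collection of terms.
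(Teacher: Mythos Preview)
Your proposal is correct and follows essentially the same line as the paper's own proof: expand ${a_{1,k-1}}^{\d_0}$ via \eqref{eqn:r0}, split the $q$-bracket by linearity, apply \eqref{eq:qjacobi1} to the nested term, then use (R2) for the block $\A_{1,2,i+1}^{1,i,n}$ to reduce $\bigl[{a_{1,i}}^{\d_0},a_{2,n}\bigr]_q$ to $a_{1,1}a_{2,i}+a_{1,n}a_{i+1,n}-a_{1,i}$, after which the remaining $q$-bracket with $a_{1,k-1}$ unpacks directly. One small remark: your hedge that part~(3) ``may in addition require (R3) and the auxiliary identities of Lemma~\ref{lem:eqn-1}'' is unnecessary --- those identities follow by the identical scheme, with no appeal to (R3) or Lemma~\ref{lem:eqn-1}.
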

\begin{proof}
We only focus on proving only one of them, the others are similar and straightforward.
For example,
\begin{align*}
\bigl[{a_{1,i}}^{\d_0},{a_{1,k-1}}^{\d_0}\bigr]_q&
\xlongequal{\eqref{eqn:r0}}\bigl[{a_{1,i}}^{\d_0},-\lr{a_{2,n},a_{1,k-1}}_q+a_{1,1}a_{k,n}+a_{2,k-1}a_{1,n}\bigr]_q\\
&\xlongequal{\quad}-\bigl[{a_{1,i}}^{\d_0},\lr{a_{2,n},a_{1,k-1}}_q\bigr]_q +a_{1,1}\bigl[{a_{1,i}}^{\d_0},a_{k,n}\bigr]_q +a_{2,k-1}{a_{1,i}}^{\d_0}a_{1,n}\\
&\xlongequal{\eqref{eq:qjacobi1}}-\bigl[\bigl[{a_{1,i}}^{\d_0},a_{2,n}\bigr]_q,a_{1,k-1}\bigr]_q+a_{1,1}\bigl[{a_{1,i}}^{\d_0},a_{k,n}\bigr]_q
+a_{2,k-1}{a_{1,i}}^{\d_0}a_{1,n}\\
&\phantom{\xlongequal{\qquad}}{}+\frac{1}{\bigl(q-q^{-1}\bigr)^2}\bigl[a_{2,n}, \bigl[a_{1,k-1}, {a_{1,i}}^{\d_0}\bigr]\bigr]\\
&\xlongequal[\eqref{rela-2}]{\eqref{eqn:r0}}\lr{a_{1,i}-a_{1,1}a_{2,i}-a_{i+1,n}a_{1,n},a_{1,k-1}}_q+a_{1,1}\bigl[{a_{1,i}}^{\d_0},a_{k,n}\bigr]_q \\
&\phantom{\xlongequal{\qquad}}{}+a_{2,k-1}{a_{1,i}}^{\d_0}a_{1,n}+\frac{1}{\bigl(q-q^{-1}\bigr)^2}\bigl[a_{2,n}, \bigl[a_{1,k-1}, {a_{1,i}}^{\d_0}\bigr] \bigr]\\
&\xlongequal{\quad }a_{1,i}a_{1,k-1}-a_{1,1}\lr{a_{2,i},a_{1,k-1}}_q-a_{i+1,n}a_{1,n}a_{1,k-1} +a_{1,1}\bigl[{a_{1,i}}^{\d_0},a_{k,n}\bigr]_q\\
&\phantom{\xlongequal{\quad }}{}+a_{2,k-1}{a_{1,i}}^{\d_0}a_{1,n}+\frac{1}{\bigl(q-q^{-1}\bigr)^2}\bigl[a_{2,n}, \bigl[a_{1,k-1}, {a_{1,i}}^{\d_0}\bigr]\bigr].
\end{align*}

This proof is finished.
\end{proof}

\begin{Lemma}\qquad\samepage
\begin{enumerate}\itemsep=0pt
 \item[$(1)$] For \smash{$\A_{1,k,i+1}^{k-1,i,j}$} as $\eqref{eqn-A-1}$, we have
\begin{gather}
[a_{2,i}, \lr{a_{k,j},a_{1,i}}_q -a_{i+1,j}a_{1,k-1}-a_{k,i}a_{1,j}]_q+a_{2,k-1}a_{1,j} \nonumber \\
\qquad=\bigl[{a_{1,i}}^{\d_0},\lr{a_{k,j},a_{i+1,n}}_q -a_{i+1,j}a_{k,n} -a_{k,i}a_{j+1,n}\bigr]_q +\bigl[{a_{1,k-1}}^{\d_0},a_{j+1,n}\bigr]_q, \label{eqn:01}\\
[a_{i+1,n}, \lr{a_{k,j},a_{1,i}}_q -a_{i+1,j}a_{1,k-1}-a_{k,i}a_{1,j}]_q+\lr{a_{k,n},a_{1,j}}_q \nonumber \\
\qquad=\bigl[{a_{1,i}}^{\d_0},\lr{a_{k,j},a_{2,i}}_q -a_{k,i}a_{2,j}-a_{2,k-1}a_{i+1,j}\bigr]_q +\bigl[{a_{1,k-1}}^{\d_0},a_{2,j}\bigr]_q. \label{eqn:02}
\end{gather}
 \item[$(2)$] For \smash{$\A_{j,k,i+1}^{k-1,i,l}$} as $\eqref{eqn-A-1}$, we have
\begin{gather}
[a_{i+2,l},\lr{a_{k,l},a_{j,i-1}}_q-a_{j,k-1}a_{i,l}-a_{k,i-1}a_{j,l}]_q+a_{k,i-1}a_{j,i+1} \nonumber \\
\qquad=\bigl[\lr{a_{k,l},a_{j,i}}_q-a_{j,k-1}a_{i+1,l}-a_{k,i}a_{j,l}, {a_{i+1,l}}^{\d_i}\bigr]_q +\bigl[a_{k,i}, {a_{j,i}}^{\d_i}\bigr]_q, \label{eqn:i1} \\
[\lr{a_{k,l},a_{j,i-1}}_q-a_{j,k-1}a_{i,l}-a_{k,i-1}a_{j,l},a_{k,i+1}]_q+\lr{a_{i,l},a_{j,i+1}}_q \nonumber \\
\qquad=\bigl[\lr{a_{k,l},a_{j,i}}_q-a_{j,k-1}a_{i+1,l} -a_{k,i}a_{j,l}, {a_{k,i}}^{\d_i}\bigr]_q+\bigl[a_{i+1,l}, {a_{j,i}}^{\d_i}\bigr]_q. \label{eqn:i2}
\end{gather}
 \item[$(3)$] For \smash{$\A_{j,k,i+1}^{i,l,m}$} as $\eqref{eqn-A-2}$, we have
\begin{gather}
a_{k,i-1}(\lr{a_{j,l}, a_{i+1,m}}_q-a_{k,i-1}a_{i+1,l}a_{j,m})+[a_{j,i-1}, \lr{a_{k,l},a_{i+1,m}}_q-a_{i+1,l}a_{k,m}]_q \nonumber \\
\qquad
={a_{k,i}}^{\d_i}(\lr{a_{j,l},a_{i+2,m}}_q-a_{i+2,l}a_{j,m})+\bigl[{a_{j,i}}^{\d_i}, a_{i+2,l}a_{k,m}-\lr{a_{k,l},a_{i+2,m}}_q\bigr]_q, \label{eqn:j1}\\
[a_{j,i+1}, a_{i+1,l}a_{k,m} -\lr{a_{k,l},a_{i+1,m}}_q]_q +[a_{k,i+1},\lr{a_{j,l},a_{i+1,m}}_q-a_{i+1,l}a_{j,m}]_q
 \nonumber \\
\qquad
=\bigl[{a_{j,i}}^{\d_i},a_{i,l}a_{k,m}-\lr{a_{k,l},a_{i,m}}_q\bigr]_q +\bigl[{a_{k,i}}^{\d_i},\lr{a_{j,l},a_{i,m}}_q-a_{i,l}a_{j,m}\bigr]_q, \label{eqn:j2}\\
{a_{i+1,l}}^{\d_i}(\lr{a_{k,m},a_{j,i-1}}_q-a_{k,i-1}a_{j,m})+\bigl[{a_{i+1,m}}^{\d_i}, a_{k,i-1}a_{j,l}-\lr{a_{k,l},a_{j,i-1}}_q\bigr]_q \nonumber \\
\qquad
=a_{i+2,l}(\lr{a_{k,m}, a_{j,i}}_q -a_{i+2,l}a_{k,i}a_{j,m})+[a_{i+2,m}, \lr{a_{k,l},a_{j,i}}_q-a_{k,i}a_{j,l}]_q, \label{eqn:j3}\\
\bigl[{a_{i+1,m}}^{\d_i},a_{k,i+1}a_{j,l}-\lr{a_{k,l},a_{j,i+1}}_q\bigr]_q +\bigl[{a_{i+1,l}}^{\d_i},\lr{a_{k,m},a_{j,i+1}}_q-a_{k,i+1}a_{j,m}\bigr]_q \nonumber \\
\qquad
=[a_{i,m},a_{k,i}a_{j,l} -\lr{a_{k,l},a_{j,i}}_q]_q +[a_{i,l},\lr{a_{k,m},a_{j,i}}_q-a_{k,i}a_{j,m}]_q. \label{eqn:j4}
\end{gather}
\end{enumerate}
\end{Lemma}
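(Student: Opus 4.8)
The plan is to establish each of the identities \eqref{eqn:01}--\eqref{eqn:j4} by the mechanical reduction already used in the proofs of Lemmas~\ref{lem:eqn-1} and~\ref{lem:eqn-2} and of the preceding Lemma: substitute the definitions \eqref{eqn:r0}, \eqref{eqn:ri}, \eqref{eqn:ri+1} of the twisted generators ${a_{1,j}}^{\d_0}$, ${a_{k,i}}^{\d_i}$, ${a_{i+1,l}}^{\d_i}$ into one side of the claimed identity and transform it, term by term, into the other. The only tools needed are: the $q$-Jacobi identities \eqref{eq:jacobi}, \eqref{eq:qjacobi0}, \eqref{eq:qjacobi1}, \eqref{eq:qjacobi3} together with the extraction rules \eqref{equ:comm1}--\eqref{equ:comm3} that govern when a commuting factor may be pulled out of a $q$-bracket; the defining relations (R1)--(R3) of Definition~\ref{defn:A(n)}, where \eqref{rela-2} (with \eqref{eqn:sharp}) resolves every nested bracket of shape $[a,[b,a]_q]_q$ into an $f$- or $g$-term and \eqref{rela-3} lets one trade ${\det}_q$ for ${\det}^q$, combined with Corollary~\ref{cor:comm} and the centrality of all $a_{i,i}$ and of $a_{1,n}$ (both immediate from (R1)), which is exactly what decides which of the recurring factors $a_{1,1}$, $a_{1,n}$, $a_{i+1,n}$, $a_{j+1,n}$, $a_{k,n}$, $a_{i,i}$, $a_{i+1,i+1},\dots$ may be moved through a bracket; and the auxiliary relations \eqref{eqn:1a}--\eqref{eqn:6b} of Lemma~\ref{lem:eqn-1} together with the vanishing commutators \eqref{eqn:com1}--\eqref{eqn:com4} of Lemma~\ref{lem:eqn-2}, which encode precisely the closure relations that surface when a twisted generator meets an untwisted one. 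The formulas \eqref{r0-1}--\eqref{r0-3} and \eqref{ri-1}--\eqref{ri-4} of the preceding Lemma, which already compute $q$-brackets of two twisted generators, may be quoted to shorten several of the steps; following the pattern of those proofs, I would write out one representative identity in full and note that the rest go through verbatim.

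For part~(1) I would start from the left-hand side of \eqref{eqn:01}, use (R2) (with \eqref{eqn:sharp}) to rewrite the nested brackets involving $a_{1,i}$ and the inner expression $\lr{a_{k,j},a_{1,i}}_q-a_{i+1,j}a_{1,k-1}-a_{k,i}a_{1,j}$ in terms of $f$- and $g$-terms of the block $\A_{1,k,i+1}^{k-1,i,j}$, push the outer $q$-bracket inside via \eqref{eq:qjacobi1} and \eqref{eq:qjacobi3}, reassemble ${a_{1,i}}^{\d_0}$ and ${a_{1,k-1}}^{\d_0}$ on the right through \eqref{eqn:r0} (moving the central $a_{1,1}$, $a_{1,n}$ freely), and cancel the leftover terms with the help of Lemma~\ref{lem:eqn-1}. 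Identity \eqref{eqn:02} is the same argument with $[a_{i+1,n},-]_q$ replacing $[a_{2,i},-]_q$. Part~(2) is the mirror computation for the twist $\d_i$, which alters only the $(i+1)$-th row and the $i$-th column of $\A$; here the central factors are $a_{i,i}$ and $a_{i+1,i+1}$, and the relevant inputs from Lemma~\ref{lem:eqn-1} are \eqref{eqn:3a}, \eqref{eqn:3b}, \eqref{eqn:4b} and their counterparts.

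Part~(3) is the most delicate. The four identities \eqref{eqn:j1}--\eqref{eqn:j4} are quadratic: both sides carry honest triple products such as $a_{k,i-1}a_{i+1,l}a_{j,m}$, not merely $q$-brackets, so rebalancing them forces one to split commuting factors repeatedly via \eqref{equ:comm1}--\eqref{equ:comm3} and, above all, to invoke the vanishing commutators \eqref{eqn:com1}--\eqref{eqn:com4} in order to reorder the composite elements $a_{k,i}a_{j,l}-\lr{a_{j,i},a_{k,l}}_q$, $a_{i+1,l}a_{k,m}-\lr{a_{k,l},a_{i+1,m}}_q$ and the like. My plan is to bring each side into a normal form in which every such composite block occurs exactly once --- eliminating one of the two $\det$-expressions by (R3) --- and then compare coefficients. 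I expect the main obstacle to be purely combinatorial bookkeeping, namely running half a dozen simultaneous commutations without losing a sign or a $q$-weight, rather than any conceptual difficulty; the one genuine subtlety is to check, in part~(3), that every product which has to be transposed is actually covered by one of \eqref{eqn:com1}--\eqref{eqn:com4}, since those are the only instances where commutativity of non-generator elements is available to us.
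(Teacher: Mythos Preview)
Your plan is essentially the paper's own approach: the paper proves only \eqref{eqn:01} in detail and declares the rest ``similar and straightforward,'' and its computation runs through exactly the toolkit you list --- specifically, it leads with the Lemma~\ref{lem:eqn-1} relations \eqref{eqn:4b} and \eqref{eqn:2a} to unpack the left-hand side, then applies \eqref{rela-2} and \eqref{rela-3} before recognising the twisted generators via \eqref{eqn:r0}. The only minor adjustment to your narrative for part~(1) is that the opening move is not (R2) directly (there is no $[a,[b,a]_q]_q$ shape in the raw left-hand side) but rather one of the precomputed identities from Lemma~\ref{lem:eqn-1}; once that is substituted, the rest proceeds exactly as you describe.
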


\begin{proof}
We only focus on proving one of them, the others are similar and straightforward. For example,
\begin{gather*}
 [a_{2,i}, [a_{k,j},a_{1,i} ]_q -a_{i+1,j}a_{1,k-1} ]_q-a_{k,i}a_{2,i}a_{1,j}+a_{2,k-1}a_{1,j}\\
\qquad \xlongequal{\eqref{eqn:4b}} (a_{k,i} [a_{2,j},a_{1,i} ]_q-a_{1,1}a_{k,i}a_{i+1,j} - [a_{2,j},a_{1,k-1} ]_q+a_{1,1}a_{k,j} ) \\
\phantom{\qquad \xlongequal{\qquad} }{}
+a_{k,i}a_{2,i}a_{1,j}+a_{2,k-1}a_{1,j}\\
\qquad \xlongequal{\quad} (- [a_{2,j},a_{1,k-1} ]_q+a_{2,k-1}a_{1,j} )
-a_{k,i} (- [a_{2,j},a_{1,i} ]_q+a_{2,i}a_{1,j} )\\
\qquad\phantom{ \xlongequal{\quad} }{}
 +a_{1,1}a_{k,j}-a_{1,1}a_{k,i}a_{i+1,j}\\
\qquad \xlongequal{\eqref{eqn:2a}} ( [a_{2,n}, [a_{1,j},a_{k,n} ]_q ]_q-a_{1,n} [a_{2,j},a_{k,n} ]_q +a_{2,k-1}a_{j+1,n}a_{1,n}-a_{j+1,n} [a_{2,n},a_{1,k-1} ]_q )\\
\phantom{\qquad \xlongequal{\qquad} }{} -a_{k,i} ( [a_{2,n}, [a_{1,j},a_{i+1,n} ]_q -a_{j+1,n} [a_{2,n},a_{1,i} ]_q +a_{j+1,n}a_{2,i}a_{1,n}\\
\phantom{\qquad \xlongequal{\qquad} }{} -a_{1,n} [a_{2,j},a_{i+1,n} ]_q ]_q )+a_{1,1}a_{k,j}-a_{1,1}a_{k,i}a_{i+1,j}\\
\qquad \xlongequal{\quad} [a_{2,n}, [a_{1,j},a_{k,n} ]_q -a_{k,i} ( [a_{1,j},a_{i+1,n} ]_q +a_{j+1,n}a_{1,i} ) ]_q +a_{1,1} (a_{k,j}-a_{k,i}a_{i+1,j} ) \\
\phantom{\qquad \xlongequal{\quad} }{} -a_{1,n} ( [a_{2,j},a_{k,n} ]_q-a_{2,k-1}a_{j+1,n} +a_{k,i} ( [a_{2,j},a_{i+1,n} ]_q-a_{j+1,n}a_{2,i} ) )\\
 \phantom{\qquad \xlongequal{\quad} }{} -a_{j+1,n} [a_{2,n},a_{1,k-1} ]_q\\
 \qquad\xlongequal[\eqref{rela-3}]{\eqref{rela-2}} [ [a_{2,n},a_{1,i} ]_q , - [a_{k,j},a_{i+1,n} ]_q+a_{k,i}a_{j+1,n}+a_{i+1,j}a_{k,n} ]_q -a_{j+1,n} [a_{2,n},a_{1,k-1} ]_q\\
\phantom{\qquad \xlongequal{\qquad} }{} -a_{1,1} ( [a_{i+1,n},- [a_{k,j},a_{i+1,n} ]_q+a_{k,i}a_{j+1,n}+a_{i+1,j}a_{k,n} ]_q -a_{j+1,n}a_{k,n} ) \\
\phantom{\qquad \xlongequal{\qquad} }{} -a_{1,n} ( [a_{2,i},- [a_{k,j},a_{i+1,n} ]_q+a_{k,i}a_{j+1,n}+a_{i+1,j}a_{k,n} ]_q +a_{2,k-1}a_{j+1,n} )\\
\qquad \xlongequal{\quad} \bigl[{a_{1,i}}^{\d_0}, [a_{k,j},a_{i+1,n} ]_q -a_{i+1,j}a_{k,n}-a_{k,i}a_{j+1,n} \bigr]_q+ \bigl[{a_{1,k-1}}^{\d_0},a_{j+1,n} \bigr]_q.
\end{gather*}
This proof is finished.
\end{proof}

\begin{Proposition}
$\d_0$, $\d_0' $ are automorphisms of algebra $\A(n)$ and $\d_0\d_0'=\d_0'\d_0=\id$.
\end{Proposition}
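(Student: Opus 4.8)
The plan is to verify directly that $\d_0$ and $\d_0'$ preserve each of the defining relations (R1)--(R3), so that they are well-defined algebra endomorphisms, and then to check that $\d_0$ and $\d_0'$ are mutually inverse, which forces both to be automorphisms. Since $\d_0$ fixes every generator except those in the first row, and swaps $a_{1,1}\leftrightarrow a_{1,n}$, relations involving only fixed generators are preserved trivially; the work is concentrated in relations that involve some entry $a_{1,j}$ with $2\le j\le n-1$.

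First I would handle (R1). The generators $a_{1,1}$ and $a_{1,n}$ remain central after applying $\d_0$ (they are swapped, and each was central), and for a first-row generator $a_{1,j}$ one must check that ${a_{1,j}}^{\d_0}$ still commutes with all generators outside $\A_{12}(0,j-1)$ (which is empty) and $\A_{34}(j-1,n-j)$. Using the explicit formula \eqref{eqn:r0} for ${a_{1,j}}^{\d_0}$ together with Corollary~\ref{cor:comm} and the commutation identities \eqref{equ:comm1}--\eqref{equ:comm3}, each such commutator reduces to commutators among generators known to commute. Next, for (R2): the relations \eqref{rela-2} attached to a submatrix $\A_{i,k,j+1}^{k-1,j,l}$ are affected only when $i=1$, i.e.\ for submatrices $\A_{1,k,j+1}^{k-1,j,l}$; here one substitutes ${a_{1,j}}^{\d_0}$ and ${a_{1,l}}^{\d_0}$ (when $l\le n-1$; the case $l=n$ uses $\d_0(a_{1,n})=a_{1,1}$) and computes both sides. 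The key computational inputs are precisely Lemma~\ref{lem:eqn-1} (items (2.a), (2.b), (4.b) and their analogues) and the two preceding technical lemmas, whose formulas \eqref{r0-1}--\eqref{r0-3} and \eqref{eqn:01}--\eqref{eqn:02} were tailored exactly to rewrite expressions such as $\bigl[{a_{1,i}}^{\d_0},{a_{1,j}}^{\d_0}\bigr]_q$ and the $q$-Jacobi terms $\frac{1}{(q-q^{-1})^2}[a_{2,n},[a_{1,j},{a_{1,i}}^{\d_0}]]$ in a form where the target relation \eqref{rela-2} (read in the original algebra) can be applied. For (R3), the relations \eqref{rela-3} for $\A_{i,k,j+1}^{j,l,m}$ with $i\ge 2$ are untouched; when $i=1$ one expands ${\det}_q$ and ${\det}^q$ of the image matrix, again feeding in Lemma~\ref{lem:eqn-1}(6.a),(6.b) and the Jacobi relations \eqref{eq:qjacobi0}--\eqref{eq:qjacobi3} to collapse everything to zero.

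Having shown $\d_0$ (and symmetrically $\d_0'$, whose formulas differ only by replacing $[a_{2,n},a_{1,j}]_q$ with $[a_{1,j},a_{2,n}]_q$) is an endomorphism, I would finish by computing $\d_0'\d_0$ on generators. On $a_{1,1}$ and $a_{1,n}$ this is the composite of two transpositions, giving the identity. On a first-row generator $a_{1,j}$ one must show $({a_{1,j}}^{\d_0})^{\d_0'}=a_{1,j}$: apply $\d_0'$ to \eqref{eqn:r0}, noting that $\d_0'$ sends $a_{1,1}\mapsto a_{1,n}$, $a_{1,n}\mapsto a_{1,1}$, fixes $a_{2,n},a_{2,j},a_{j+1,n}$, and sends $a_{1,j}\mapsto {a_{1,j}}^{\d_0'}$; using the identity \eqref{eq:qjacobi1} to interchange the order of the two maps' twists, the cross terms cancel and one recovers $a_{1,j}$. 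By symmetry $\d_0\d_0'=\id$ as well, so $\d_0$ and $\d_0'$ are inverse automorphisms.

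\textbf{Main obstacle.} The hard part is not conceptual but the bookkeeping in step (R2): correctly tracking which of the six identities of Lemma~\ref{lem:eqn-1} and which rewriting from the two preparatory lemmas applies to each term, keeping the $q$- versus $q^{-1}$-brackets and the $\frac{1}{(q-q^{-1})^2}$-Jacobi corrections straight, and verifying that the boundary cases ($k=2$, so $a_{1,k-1}=a_{1,1}$ is swapped; $l=n$, so $a_{1,l}=a_{1,n}$ is swapped; $j+1=n$) are all consistent. The preparatory lemmas were engineered so that every term lands in the domain of \eqref{rela-2} or \eqref{rela-3}, so once the matching is done the cancellation is forced, but assembling it cleanly is where the care is needed.
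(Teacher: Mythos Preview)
Your plan matches the paper's: verify that $\d_0$ (and symmetrically $\d_0'$) preserve (R1)--(R3) directly using the preparatory lemmas, then check on generators that they are mutual inverses. The tools you name---formulas \eqref{r0-1}--\eqref{r0-3} and \eqref{eqn:01}--\eqref{eqn:02} for the (R2) check, and \eqref{eqn:6a}--\eqref{eqn:6b} from Lemma~\ref{lem:eqn-1} for the (R3) check---are exactly the ones the paper uses, and the inverse computation is the same (the paper recognises $[[a_{2,n},a_{1,i}]_q,a_{2,n}]_q=f\bigl(\A_{1,2,i+1}^{1,i,n}\bigr)$ via \eqref{rela-2}).

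One step is underestimated. In your (R1) paragraph you claim that each commutator ``reduces to commutators among generators known to commute'' via Corollary~\ref{cor:comm} and \eqref{equ:comm1}--\eqref{equ:comm3}. That is true when the second generator is fixed by $\d_0$, but not when both lie in the first row: establishing $[\d_0(a_{1,i}),\d_0(a_{1,k})]=0$ for $2\le i<k\le n-1$ is not a routine reduction, since expanding both ${a_{1,i}}^{\d_0}$ and ${a_{1,k}}^{\d_0}$ produces terms like $\bigl[[a_{2,n},a_{1,i}]_q,[a_{2,n},a_{1,k}]_q\bigr]$ that do not simplify under (R1) alone. The paper handles this case by first rewriting ${a_{1,k}}^{\d_0}$ via \eqref{rela-3} so as to expose $[a_{i+1,n},\,\lr{a_{2,k},a_{1,i}}_q-a_{2,i}a_{1,k}]_q$, then applying the Jacobi-type identity \eqref{eq:qjacobi3} and, crucially, relation \eqref{eqn:com4} from Lemma~\ref{lem:eqn-2}; only then does the expression collapse. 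So Lemma~\ref{lem:eqn-2} is already needed at the (R1) stage, not just later. With that correction, your proposal is complete and coincides with the paper's argument.
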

\begin{proof}
To prove that $\d_0$ is an algebra homomorphism, it is sufficient to demonstrate that $\d_0$ keeps the relations of $\A(n)$ associating with $a_{1,i}$, $i \in \lbb{1, n}.$

The relations (R1):
If $i=1$ or $i=n$, $a_{1,1}$ and $a_{1,n}$ are in the centers of $\A(n)$.
By the definition of $\d_0$, $\d_0(a_{1,1})=a_{1,n}$, $\d_0(a_{1,n})=a_{1,1}$, which are also in the center of $\A(n)$.
We suppose that $i \in \lbb{2,n-1}$.
If there exists $a_{j,k}$ such that $[a_{1,i}, a_{k,l}]=0$, we consider two cases:

When $j=1$ and $ k\neq n$, let us assume that $i<k$, we have
\begin{gather*}
 \lr{\d_0\lrb{a_{1,i}},\d_0\lrb{a_{1,k}}}\\
\qquad \xlongequal[(\d_0)]{\lrb{\ref{eqn:r0}}} \bigl[{a_{1,i}}^{\d_0}, -\lr{a_{2,n}, a_{1,k}}_q+a_{k+1,n}a_{1,1}+a_{2,k}a_{1,n}\bigr]\\
\qquad\xlongequal{\lrb{\textrm{R1}}} \bigl[{a_{1,i}}^{\d_0}, -\lr{a_{2,n}, a_{1,k}}_q+a_{2,k}a_{1,n}\bigr]\\
\qquad \xlongequal{\lrb{\ref{rela-3}}} \bigl[{a_{1,i}}^{\d_0}, [\lr{a_{i+1, n}, a_{2, k}}_q, a_{1, i}]_q -a_{2, i}\lr{a_{i+1, n}, a_{1, k}}_q +a_{i+1, k}(a_{2, i} a_{1, n}-\lr{ a_{2, n}, a_{1, i}}_q)\bigr]\\
\qquad \xlongequal{\lrb{\ref{eqn:r0}}} \bigl[{a_{1,i}}^{\d_0}, [\lr{a_{i+1, n}, a_{2, k}}_q, a_{1, i}]_q -a_{2, i}\lr{a_{i+1, n}, a_{1, k}}_q +a_{i+1, k}({a_{1,i}}^{\d_0}-a_{i+1,n}a_{1,1})\bigr]\\
\qquad \xlongequal{\lrb{\textrm{R1}}} \bigl[{a_{1,i}}^{\d_0}, [\lr{a_{i+1, n}, a_{2, k}}_q, a_{1, i}]_q -a_{2, i}\lr{a_{i+1, n}, a_{1, k}}_q \bigr]\\
\qquad \xlongequal{\lrb{\ref{equ:comm2}}} \bigl[{a_{1,i}}^{\d_0}, [a_{i+1, n}, \lr{a_{2, k}, a_{1, i}}_q]_q -a_{2, i}\lr{a_{i+1, n}, a_{1, k}}_q \bigr]\\
 \qquad\xlongequal{\quad} \bigl[{a_{1,i}}^{\d_0}, \lr{a_{i+1, n}, \lr{a_{2, k}, a_{1, i}}_q-a_{2, i}a_{1, k}}_q \bigr]\\
\qquad \xlongequal{\lrb{\ref{eq:qjacobi3}}} -\bigl[a_{i+1, n}, \bigl[\lr{a_{2, k}, a_{1, i}}_q -a_{2, i}a_{1, k}, {a_{1,i}}^{\d_0} \bigr]_q \bigr] \\
\phantom{\qquad \xlongequal{\qquad}\!\! }{} - \bigl[\lr{a_{2, k}, a_{1, i}}_q-a_{2, i}a_{1, k},
 \bigl[{a_{1,i}}^{\d_0}, a_{i+1,n}\bigr]_q \bigr]\\
\qquad \xlongequal[\eqref{eqn:com4}]{\eqref{equ:comm3}} -{a_{1,i}}^{\d_0}[_{i+1, n}, \lr{a_{2, k}, a_{1, i}}_q-a_{2, i}a_{1, k} ]-{a_{1,i}}^{\d_0}[ \lr{a_{2, k}, a_{1, i}}_q-a_{2, i}a_{1, k} , a_{i+1,n}]\\
 \qquad\xlongequal{\eqref{eq:jacobi}} 0.
\end{gather*}
When $\d_0(a_{k,l})=a_{k,l}$, we have
\begin{align*}
[\d_0(a_{1,i}),\d_0(a_{j,k})]&\xlongequal{(\d_0)}\bigl[a_{j,i}^{\d_0}, a_{j,k}\bigr]\xlongequal{\eqref{eqn:r0}}[a_{j,k},-\lr{a_{2,n},a_{1,i}}_q+a_{i+1,n}a_{1,1}+a_{2,i}a_{1,n}]\\
&\xlongequal{\quad}-[\lr{a_{2,n}, a_{1,i}}_q, a_{j,k}]+\lr{a_{i+1,n}a_{1,1}, a_{j,k}}+\lr{a_{2,i}a_{1,n}, a_{j,k}}\\
 &\xlongequal{(\textrm{R1})}0.
 \end{align*}
Consequently, if $\lr{a_{i,j}, a_{k,l}}=0$, then $\bigl[{a_{i,j}}^{\d_0},{a_{k,l}}^{\d_0}\bigr]=0$, which implies that
\begin{equation*}
\bigl[{a_{i,j}}^{\d_0},{a_{k,l}}^{\d_0}\bigr]_q\xlongequal{\eqref{equ:comm1}} {a_{i,j}}^{\d_0}{a_{k,l}}^{\d_0}.
\end{equation*}
In the subsequent, we directly use this fact without further explanation.

The relations (R2): If we have chosen the submatrix \smash{$\A_{1,k,i+1}^{k-1,i,j}$} as $\eqref{eqn-A-1}$, the proof deduces to the simpler case when $k=2$ or $j=n$ and straightforward.
In more tedious calculations, we concentrate on exploring the general cases where $k\neq 2$ and $j \neq n$ and have
\begin{gather*}
 [ [\d_0 (a_{k,j} ),\d_0 (a_{1,i} ) ]_q, \d_0 (a_{k,j} ) ]_q\\
\qquad \xlongequal[\eqref{equ:comm2}]{\eqref{eqn:r0}} - [a_{2,n}, [ [a_{k,j}, a_{1,i} ]_q, a_{k,j} ]_q ]_q +a_{1,1} [ [a_{k,j}, a_{i+1,n} ]_q, a_{k,j} ]_q +a_{1,n} [ [a_{k,j}, a_{2,i} ]_q, a_{k,j} ]_q\\
\qquad \xlongequal{\eqref{rela-2}} - \bigl[a_{2,n},f\bigl(\A_{1,k,i+1}^{k-1,i,j}\bigr) \bigr]_q
+a_{1,1}g\bigl(\A_{k,i+1,n}^{i,j,n}\bigr) +a_{1,n}f\bigl(\A_{2,k,i+1}^{k-1,i,j}\bigr)\\
 \qquad\xlongequal[\eqref{eqn:sharp}]{\eqref{eqn:natural}} - [a_{2,n}, a_{1,i} ]_q+a_{1,1}a_{i+1,n}+a_{2,i}a_{1,n} \\
\phantom{\qquad \xlongequal{\qquad}\!\! }{} +a_{i+1,j}a_{k,j} (- [a_{2,n}, a_{1,k-1} ]_q+a_{1,1}a_{k,n} +a_{2,k-1}a_{1,n} ) \\
\phantom{\qquad \xlongequal{\qquad}\!\! }{} +a_{k,i}a_{k,j} (- [a_{2,n}, a_{1,j} ]_q+a_{1,1}a_{n,n}+a_{2,j}a_{1,n} ) \\
\phantom{\qquad \xlongequal{\qquad}\!\! }{} -a_{k,i} (- [a_{2,n},a_{1,k-1} ]_q+a_{1,1}a_{k,n}+a_{2,k-1}a_{1,n} ) \\
\phantom{\qquad \xlongequal{\qquad}\!\! }{} -a_{i+1,j} (- [a_{2,n}, a_{1,j} ]_q+a_{1,1}a_{n,n}+a_{2,j}a_{1,n} ) \\
 \qquad\xlongequal{\eqref{eqn:r0}} a_{1,i}^{\d_0}+ a_{i+1,j}a_{k,j} a_{1,k-1}^{\d_0} +a_{k,i}a_{k,j}a_{1,j}^{\d_0}-a_{k,i}a_{1,k-1}^{\d_0}-a_{i+1,j}a_{1,j}^{\d_0}\\
 \qquad\xlongequal{(\d_0)} \d_0\lrb{a_{1,i}}+\d_0\lrb{a_{1,k-1}}\d_0\lrb{a_{i+1,j}}\d_0\lrb{a_{k,j}} +\d_0\lrb{a_{k,i}}\d_0\lrb{a_{1,j}}\d_0\lrb{a_{k,j}}\\
 \phantom{ \qquad\xlongequal{\qquad}\!\! }{} -\d_0\lrb{a_{1,k-1}}\d_0\lrb{a_{k,i}} -\d_0\lrb{a_{i+1,j}}\d_0\lrb{a_{1,j}}
\end{gather*}
and
\begin{gather*}
 \lr{\d_0\lrb{a_{1,i}},\lr{\d_0\lrb{a_{k,j}},\d_0\lrb{a_{1,i}}}_q}_q \\
\qquad \xlongequal[(\d_0)]{\eqref{eqn:r0}} - \bigl[{a_{1,i}}^{\d_0}, [a_{2,n}, [a_{k,j},a_{1,i} ]_q ]_q +a_{1,1} [a_{k,j},a_{i+1,n} ]_q +a_{1,n} [a_{k,j},a_{2,i} ]_q \bigr]_q\\
\qquad \xlongequal{\eqref{eq:qjacobi1}} \left(- \bigl[ \bigl[{a_{1,i}}^{\d_0},a_{2,n} \bigr]_q, [a_{k,j},a_{1,i} ]_q \bigr]_q+\frac{1}{ \bigl(q-q^{-1}\bigr)^2} \bigl[a_{2,n}, \bigl[ [a_{k,j},a_{1,i} ]_q, {a_{1,i}}^{\d_0} \bigr] \bigr] \right)\\
 \phantom{\qquad \xlongequal{\qquad}}{} +a_{1,1} \bigl[{a_{1,i}}^{\d_0}, [a_{k,j},a_{i+1,n} ]_q \bigr]_q +a_{1,n} \bigl[{a_{1,i}}^{\d_0}, [a_{k,j},a_{2,i} ]_q \bigr]_q\\
\qquad \xlongequal[\eqref{rela-2}]{\eqref{eqn:r0}} [a_{1,i}, [a_{k,j},a_{1,i} ]_q ]_q-a_{1,1} [a_{2,i}, [a_{k,j},a_{1,i} ]_q ]_q -a_{1,n} [a_{i+1,n}, [a_{k,j},a_{1,i} ]_q ]_q \\
 \phantom{\qquad \xlongequal{\qquad}}{} +a_{1,1} \bigl[{a_{1,i}}^{\d_0}, [a_{k,j},a_{i+1,n} ]_q \bigr]_q+a_{1,n} \bigl[{a_{1,i}}^{\d_0}, [a_{k,j},a_{2,i} ]_q \bigr]_q \\
 \phantom{\qquad \xlongequal{\qquad}}{} +\frac{1}{ \bigl(q-q^{-1}\bigr)^2} \bigl[a_{2,n}, [ [a_{k,j},a_{1,i} ]_q, {a_{1,i}}^{\d_0} ] \bigr]\\
 \qquad\xlongequal{\eqref{rela-2}} g\bigl(\A_{1,k,i+1}^{k-1,i,j}\bigr)-a_{1,1} [a_{2,i}, [a_{k,j},a_{1,i} ]_q ]_q -a_{1,n} [a_{i+1,n}, [a_{k,j},a_{1,i} ]_q ]_q \\
 \phantom{\qquad \xlongequal{\qquad}}{} +a_{1,1} \bigl[{a_{1,i}}^{\d_0}, [a_{k,j},a_{i+1,n} ]_q \bigr]_q +a_{1,n} \bigl[{a_{1,i}}^{\d_0}, [a_{k,j},a_{2,i} ]_q \bigr]_q\\
 \phantom{\qquad \xlongequal{\qquad}}{} +\frac{1}{ \bigl(q-q^{-1}\bigr)^2} \bigl[a_{2,n}, \bigl[ [a_{k,j},a_{1,i} ]_q, {a_{1,i}}^{\d_0} \bigr] \bigr]\\
\qquad \xlongequal{\eqref{eqn:sharp}} a_{k,j}-a_{2,i}a_{i+1,j} -a_{1,1} [a_{2,i}, [a_{k,j},a_{1,i} ]_q ]_q -a_{1,n} [a_{i+1,n}, [a_{k,j},a_{1,i} ]_q ]_q \\
 \phantom{\qquad \xlongequal{\qquad}}{} +a_{1,1} \bigl[{a_{1,i}}^{\d_0}, [a_{k,j},a_{i+1,n} ]_q \bigr]_q +a_{1,n} \bigl[{a_{1,i}}^{\d_0}, [a_{k,j},a_{2,i} ]_q \bigr]_q+a_{1,i}a_{1,k-1}a_{i+1,j}\\
 \phantom{\qquad \xlongequal{\qquad}}{} +a_{1,i}a_{k,i}a_{1,j} -a_{1,k-1}a_{1,j}+\frac{1}{ \bigl(q-q^{-1}\bigr)^2} \bigl[a_{2,n}, \bigl[ [a_{k,j},a_{1,i} ]_q, {a_{1,i}}^{\d_0} \bigr] \bigr]\\
 \qquad\xlongequal[\eqref{r0-3}]{\eqref{r0-1}\eqref{r0-2} } a_{k,j}-a_{2,i}a_{i+1,j} -a_{1,1} [a_{2,i}, [a_{k,j},a_{1,i} ]_q ]_q -a_{1,n} [a_{i+1,n}, [a_{k,j},a_{1,i} ]_q ]_q \\
 \phantom{\qquad \xlongequal{\qquad}\quad\ }{} +a_{1,1} \bigl[{a_{1,i}}^{\d_0}, [a_{k,j},a_{i+1,n} ]_q \bigr]_q +a_{1,n} \bigl[{a_{1,i}}^{\d_0}, [a_{k,j},a_{2,i} ]_q \bigr]_q \\
 \phantom{\qquad \xlongequal{\qquad}\quad\ }{} +a_{i+1,j} \bigl({a_{1,i}}^{\d_0}{a_{1,k-1}}^{\d_0}+a_{1,1} [a_{2,i},a_{1,k-1} ]_q\\
 \phantom{\qquad \xlongequal{\qquad}\quad\ }{}
+a_{i+1,n}a_{1,n}a_{1,k-1} -a_{1,1} \bigl[{a_{1,i}}^{\d_0},a_{k,n} \bigr]_q \\
 \phantom{\qquad \xlongequal{\qquad}\quad\ }{} -a_{2,k-1}{a_{1,i}}^{\d_0}a_{1,n} \bigr)+a_{k,i} \bigl({a_{1,i}}^{\d_0}{a_{1,j}}^{\d_0}+a_{1,1}a_{2,i}a_{1,j} +a_{1,n} [a_{i+1,n},a_{1,j} ]_q \\
 \phantom{\qquad \xlongequal{\qquad}\quad\ }{} -a_{1,1}{a_{1,i}}^{\d_0}a_{j+1,n} -a_{1,n} \bigl[{a_{1,i}}^{\d_0},a_{2,j} \bigr]_q \bigr)- \bigl({a_{1,k-1}}^{\d_0}{a_{1,j}}^{\d_0} +a_{1,1}a_{2,k-1}a_{1,j}\\
 \phantom{\qquad \xlongequal{\qquad}\quad\ }{}
+a_{1,n} [a_{k,n},a_{1,j} ]_q -a_{1,1}{a_{1,k-1}}^{\d_0}a_{j+1,n} -a_{1,n} \bigl[{a_{1,k-1}}^{\d_0},a_{2,j} \bigr]_q \bigr)\\
 \phantom{\qquad \xlongequal{\qquad}\quad\ }{} +\frac{1}{ \bigl(q-q^{-1}\bigr)^2} \bigl( \bigl[a_{2,n}, \bigl[ [a_{k,j},a_{1,i} ]_q-a_{i+1,j}a_{1,k-1} -a_{k,i}a_{1,j}, {a_{1,i}}^{\d_0} \bigr]\\
 \phantom{\qquad \xlongequal{\qquad}\quad\ }{} + \bigl[a_{1,j}, {a_{1,k-1}}^{\d_0} \bigr] \bigr] \bigr)\\
 \qquad\xlongequal{\eqref{eq:qjacobi0}} a_{k,j}-a_{2,i}a_{i+1,j} +a_{i+1,j}{a_{1,i}}^{\d_0} {a_{1,k-1}}^{\d_0} +a_{k,i}{a_{1,i}}^{\d_0}{a_{1,j}}^{\d_0}-{a_{1,k-1}}^{\d_0}{a_{1,j}}^{\d_0}\\
 \phantom{\qquad \xlongequal{\qquad}}{} -a_{1,1} ( [a_{2,i}, [a_{k,j},a_{1,i} ]_q -a_{i+1,j}a_{1,k-1}-a_{k,i}a_{1,j} ]_q+{a_{1,k-1}}^{\d_0}a_{j+1,n} )\\
 \phantom{\qquad \xlongequal{\qquad}}{} +a_{1,1} \bigl( \bigl[{a_{1,i}}^{\d_0}, [a_{k,j},a_{i+1,n} ]_q -a_{i+1,j}a_{k,n} -a_{k,i}a_{j+1,n} \bigr]_q+a_{2,k-1}a_{1,j} \bigr) \\
 \phantom{\qquad \xlongequal{\qquad}}{} -a_{1,n} ( [a_{i+1,n}, [a_{k,j},a_{1,i} ]_q -a_{i+1,j}a_{1,k-1}-a_{k,i}a_{1,j} ]_q + [a_{k,n},a_{1,j} ]_q )\\
 \phantom{\qquad \xlongequal{\qquad}}{} +a_{1,n} \bigl( [{a_{1,i}}^{\d_0}, [a_{k,j},a_{2,i} ]_q -a_{k,i}a_{2,j} -a_{2,k-1}a_{i+1,j} ]_q + \bigl[{a_{1,k-1}}^{\d_0},a_{2,j} \bigr]_q \bigr)\\
 \qquad\xlongequal[\eqref{eqn:02}]{\eqref{eqn:01}}
 a_{k,j}-a_{2,i}a_{i+1,j}+a_{i+1,j}a_{1,i}^{\d_0}a_{1,k-1}^{\d_0} +a_{k,i} a_{1,i}^{\d_0}a_{1,j}^{\d_0} - a_{1,k-1}^{\d_0}a_{1,j}^{\d_0}\\
 \qquad\xlongequal{\quad} \d_0\lrb{a_{k,j}}-\d_0\lrb{a_{2,i}}\d_0\lrb{a_{i+1,j}} +\d_0\lrb{a_{i+1,j}}\d_0\lrb{a_{1,i}}\d_0\lrb{a_{1,k-1}}\\
 \phantom{ \qquad\xlongequal{\quad}}{} +\d_0\lrb{a_{k,i}}\d_0\lrb{a_{1,i}}\d_0\lrb{a_{1,j}} -\d_0\lrb{a_{1,k-1}}\d_0\lrb{a_{1,j}}.
\end{gather*}
Hence,
\begin{gather*}
f\bigl(\d_0\bigl(\A_{1,k,i+1}^{k-1,i,j}\bigr)\bigr)= [\d_0\lrb{a_{k, j}}, {\lr{\d_0\lrb{a_{1, i}}, \d_0\lrb{a_{k, j}}}}_q]_q,\\
 g\bigl(\d_0\bigl(\A_{1,k,i+1}^{k-1,i,j}\bigr)\bigr)=[\d_0\lrb{a_{1, i}}, {\lr{\d_0\lrb{a_{k, j}}, \d_0\lrb{ a_{1,i}}}}_q]_q.
\end{gather*}

The relations (R3):
Choosing the submatrix \smash{$\A_{1,k,i+1}^{i,j,l}(l\neq n)$}, we have
 \begin{gather*}
 [\lr{\d_0\lrb{a_{i+1,l}}, \d_0\lrb{a_{k,j}}}_q, \d_0\lrb{a_{1,i}}]_q+ \lr{\d_0\lrb{a_{k,l}}, \d_0\lrb{a_{1,j}}}_q +\d_0\lrb{a_{i+1,j}}\d_0\lrb{a_{k,i}} \d_0\lrb{a_{1,l}} \\
\qquad -\d_0\lrb{a_{k,i}}\lr{\d_0\lrb{a_{i+1,l}}, \d_0\lrb{a_{1,j}}}_q -\d_0\lrb{a_{i+1,j}}\lr{\d_0\lrb{a_{k,l}}, \d_0\lrb{a_{1,i}}}_q -\d_0\lrb{a_{k,j}} \d_0\lrb{a_{1,l}}\\
\qquad\qquad\xlongequal[(\d_0)]{\eqref{eqn:r0}} [ [a_{i+1,l}, a_{k,j} ]_q, - [a_{2,n},a_{1,i} ]_q+a_{1,1}a_{i+1,n}+a_{2,i}a_{1,n} ]_q\\
\phantom{\qquad\qquad\qquad\ }{} + [a_{k,l}, - [a_{2,n},a_{1,j} ]_q+a_{1,1}a_{j+1,n}+a_{2,j}a_{1,n} ]_q\\
\phantom{\qquad\qquad\qquad\ }{} +a_{i+1,j}a_{k,i} (- [a_{2,n},a_{1,l} ]_q+a_{1,1}a_{l+1,n}+a_{2,l}a_{1,n} ) \\
\phantom{\qquad\qquad\qquad\ }{} -a_{k,i} [a_{i+1,l}, - [a_{2,n},a_{1,j} ]_q+a_{1,1}a_{j+1,n}+a_{2,j}a_{1,n} ]_q \\
\phantom{\qquad\qquad\qquad\ }{}-a_{i+1,j} [a_{k,l},- [a_{2,n},a_{1,i} ]_q+a_{1,1}a_{i+1,n}+a_{2,i}a_{1,n} ]_q \\
\phantom{\qquad\qquad\qquad\ }{} -a_{k,j} (- [a_{2,n},a_{1,l} ]_q+a_{1,1}a_{l+1,n}+a_{2,l}a_{1,n} )\\
\qquad\qquad \xlongequal{\eqref{equ:comm2}} - [a_{2,n}, [ [a_{i+1,l}, a_{k,j} ]_q,a_{1,i} ]_q ]_q +a_{1,1} [ [a_{i+1,l}, a_{k,j} ]_q, a_{i+1,n} ]_q \\
 \phantom{\qquad\qquad \qquad\ }{} +a_{1,n} [ [a_{i+1,l}, a_{k,j} ]_q, a_{2,i} ]_q - [a_{2,n}, [a_{k,l},a_{1,j} ]_q ]_q +a_{1,1} [a_{k,l},a_{j+1,n} ]_q\\
 \phantom{\qquad\qquad \qquad\ }{} +a_{1,n} [a_{k,l}, a_{2,j} ]_q - [a_{2,n},a_{i+1,j}a_{k,i}a_{1,l} ]_q +a_{1,1}a_{i+1,j}a_{k,i}a_{l+1,n} \\
 \phantom{\qquad\qquad \qquad\ }{}
 +a_{i+1,j}a_{k,i}a_{2,l}a_{1,n} \\
 \phantom{\qquad\qquad \qquad\ }{} + [a_{2,n},a_{k,i} [a_{i+1,l},a_{1,j} ]_q ]_q-a_{1,1}a_{k,i} [a_{i+1,l}, a_{j+1,n} ]_q -a_{k,i}a_{1,n} [a_{i+1,l},a_{2,j} ]_q\\
 \phantom{\qquad\qquad \xlongequal{\eqref{equ:comm2}} }{} + [a_{2,n},a_{i+1,j} [a_{k,l},a_{1,i} ]_q ]_q -a_{1,1}a_{i+1,j} [a_{k,l},a_{i+1,n} ]_q -a_{i+1,j}a_{1,n} [a_{k,l},a_{2,i} ]_q \\
 \phantom{\qquad\qquad \xlongequal{\eqref{equ:comm2}} }{}+ [a_{2,n},a_{k,j}a_{1,l} ]_q-a_{1,1}a_{l+1,n}a_{k,j}-a_{k,j}a_{2,l}a_{1,n}\\
 \qquad\qquad \xlongequal{\eqref{eqn:D-2}} - \bigl[a_{2,n},{\det}^q \bigl(\A_{1,k,i+1}^{i,j,l} \bigr) \bigr]_q +a_{1,1} ( [a_{i+1,l}, [a_{k,j}, a_{i+1,n} ]_q ]_q +a_{i+1,j}a_{k,i}a_{l+1,n} \\
 \phantom{\qquad\qquad \xlongequal{\eqref{eqn:D-2}}}{} + [a_{k,l},a_{j+1,n} ]_q-a_{k,i} [a_{i+1,l}, a_{j+1,n} ]_q -a_{i+1,j} [a_{k,l},a_{i+1,n} ]_q -a_{l+1,n}a_{k,j} )\\
 \phantom{\qquad\qquad \xlongequal{\eqref{eqn:D-2}}}{}+a_{1,n}{\det}^q \bigl(\A_{2,k,i+1}^{i,j,l} \bigr)\\
\qquad\qquad \xlongequal[\eqref{eqn:6a}]{\eqref{rela-3}} 0
\end{gather*}
and
\begin{gather*}
 [\lr{\d_0\lrb{a_{1,i}}, \d_0\lrb{a_{k,j}}}_q, \d_0\lrb{a_{i+1,l}}]_q+ \lr{\d_0\lrb{a_{1,j}}, \d_0\lrb{a_{k,l}}}_q +\d_0\lrb{a_{k,i}}\lr{\d_0\lrb{a_{i+1,j}}, \d_0\lrb{a_{1,l}}}_q \\
 \qquad -\d_0\lrb{a_{i+1,j}}\lr{\d_0\lrb{a_{1,i}}, \d_0\lrb{a_{k,l}}}_q -\d_0\lrb{a_{k,i}}\lr{\d_0\lrb{a_{1,j}}, \d_0\lrb{a_{i+1,l}}}_q-\d_0\lrb{a_{k,j}} \d_0\lrb{a_{1,l}}\\
 \qquad\qquad\xlongequal[(\d_0)]{\eqref{eqn:r0}} [ - [a_{2,n},a_{1,i} ]_q +a_{1,1}a_{i+1,n}+a_{2,i}a_{1,n}, [a_{k,j}, a_{i+1,l} ]_q ]_q\\
\phantom{\qquad\qquad\qquad\ \,}{} + [ - [a_{2,n},a_{1,j} ]_q+a_{1,1}a_{j+1,n}+a_{2,j}a_{1,n}, a_{k,l} ]_q \\
\phantom{\qquad\qquad\qquad\ \,}{} +a_{k,i}a_{i+1,j} (- [a_{2,n},a_{1,l} ]_q+a_{1,1}a_{l+1,n}+a_{2,l}a_{1,n} ) \\
 \phantom{\qquad\qquad\qquad\ \,}{} -a_{i+1,j} [ - [a_{2,n},a_{1,i} ]_q+a_{1,1}a_{i+1,n}+a_{2,i}a_{1,n}, a_{k,l} ]_q \\
 \phantom{\qquad\qquad\qquad\ \, }{} -a_{k,i} [- [a_{2,n},a_{1,j} ]_q+a_{1,1}a_{j+1,n}+a_{2,j}a_{1,n}, a_{i+1,l} ]_q\\
 \phantom{\qquad\qquad\qquad\ \, }{} -a_{k,j} (- [a_{2,n},a_{1,l} ]_q+a_{1,1}a_{l+1,n}+a_{2,l}a_{1,n} ) \\
\qquad\qquad \xlongequal{\eqref{equ:comm2}} - [a_{2,n}, [a_{1,i}, [a_{k,j}, a_{i+1,l} ]_q ]_q ]_q +a_{1,1} [a_{i+1,n}, [a_{k,j}, a_{i+1,l} ]_q ]_q \\
 \phantom{\qquad\qquad\qquad\ \,}{} +a_{1,n} [a_{2,i}, [a_{k,j}, a_{i+1,l} ]_q ]_q - [a_{2,n}, [a_{1,j}, a_{k,l} ]_q ]_q +a_{1,1} [a_{j+1,n},a_{k,l} ]_q\\
 \phantom{\qquad\qquad\qquad\ \,}{} +a_{1,n} [a_{2,j}, a_{k,l} ]_q - [a_{2,n},a_{k,i}a_{i+1,j}a_{1,l} ]_q +a_{1,1}a_{k,i}a_{i+1,j}a_{l+1,n} \\
 \phantom{\qquad\qquad\qquad\ \,}{} +a_{k,i}a_{i+1,j}a_{2,l}a_{1,n} + [a_{2,n},a_{i+1,j} [a_{1,i}, a_{k,l} ]_q ]_q -a_{i+1,j}a_{1,1} [a_{i+1,n}, a_{k,l} ]_q\\
 \phantom{\qquad\qquad\qquad\ \,}{} -a_{i+1,j}a_{1,n} [a_{2,i}, a_{k,l} ]_q + [a_{2,n},a_{k,i} [a_{1,j}, a_{i+1,l} ]_q ]_q-a_{k,i}a_{1,1} [a_{j+1,n}, a_{i+1,l} ]_q \\
 \phantom{\qquad\qquad\qquad\ \,}{} -a_{k,i}a_{1,n} [a_{2,j}, a_{i+1,l} ]_q + [a_{2,n},a_{k,j}a_{1,l} ]_q-a_{1,1}a_{l+1,n}a_{k,j}-a_{k,j}a_{2,l}a_{1,n}\\
\qquad\qquad \xlongequal{\eqref{eqn:D-1}} - \bigl[a_{2,n},{\det}_q \bigl(\A_{1,k,i+1}^{i,j,l} \bigr) \bigr]_q +a_{1,1} ( [a_{i+1,n}, [a_{k,j}, a_{i+1,l} ]_q ]_q + [a_{j+1,n},a_{k,l} ]_q \\
 \phantom{\qquad\qquad\qquad\ \, \ }{} +a_{k,i}a_{i+1,j}a_{l+1,n} -a_{i+1,j} [a_{i+1,n}, a_{k,l} ]_q-a_{k,i} [a_{j+1,n}, a_{i+1,l} ]_q\\
 \phantom{\qquad\qquad\qquad\ \,\ }{} -a_{1,1}a_{l+1,n}a_{k,j} ) +a_{1,n}{\det}_q \bigl(\A_{2,k,i+1}^{i,j,l} \bigr)\\
\qquad\qquad \xlongequal[\eqref{eqn:6b}]{\eqref{rela-3}}0.
\end{gather*}
Hence,
\begin{gather*}
{\det}^q \bigl(\d_0 \bigl(\A_{1,k,i+1}^{i,j,l}\bigr) \bigr) ={\det}_q \bigl(\d_0 \bigl(\A_{1,k,i+1}^{i,j,l}\bigr) \bigr)=0.
\end{gather*}
In particular, when $l=n$, we have
\begin{gather*}
{\det}^q \bigl(\d_0 \bigl(\A_{1,k,i+1}^{i,j,n} \bigr) \bigr) ={\det}_q \bigl(\d_0 \bigl(\A_{1,k,i+1}^{i,j,n} \bigr) \bigr)=0.
\end{gather*}
Therefore, $\d_0$ is an algebra homomorphism of $\A(n)$.
Similarly, $\d_0'$ is also an algebra homomorphism.
In fact, we also have
\begin{gather*}
\d_0\d_0'\lrb{a_{1,1}} = \d_0\lrb{a_{1,n}}=a_{1,1},\qquad
\d_0\d_0'\lrb{a_{1,n}} = \d_0\lrb{a_{1,1}}=a_{1,n},\\
\d_0\d_0'\lrb{a_{1,i}},
 = \d_0\bigl({a_{1,i}}^{\d_0'}\bigr)=a_{1,i}+[[a_{2,n}, a_{1,i}]_q, a_{2,n}]_q -f\bigl(\A_{1,2,i+1}^{1,i,n}\bigr)
=a_{1,i},\\
\d_0\d_0'\lrb{a_{i,l}} = \d_0\lrb{a_{i,l}}=a_{i,l},
\end{gather*}
where $i \neq 1$ and $l \in \lbb{1,n}$.
This means that $\d_0\d_0'={\rm id}$.
Similarly, $\d_0'\d_0={\rm id}$.

 The proof is finished.
\end{proof}

\begin{Proposition}
$\d_i$, $\d_i'$, $i \in \lbb{1,n-1}$ are the automorphisms of algebra $\A(n)$ and $\d_i\d_i'=\d_i'\d_i\allowbreak=\id$.
\end{Proposition}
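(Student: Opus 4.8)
The plan is to mimic the proof of the previous proposition (concerning $\d_0$, $\d_0'$), but now working with the local transformation attached to the $i$-th column and $(i+1)$-th row. First I would observe that, just as for $\d_0$, it suffices to check that $\d_i$ preserves all defining relations (R1)--(R3) of $\A(n)$ that involve a generator $a_{k,i}$ with $k\le i-1$ or a generator $a_{i+1,l}$ with $l\ge i+2$; every other relation is preserved trivially because $\d_i$ fixes all generators outside the $(i+1)$-th row and $i$-th column. So the work splits into three blocks mirroring the earlier argument.

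For (R1): for each generator $a_{k,i}$ or $a_{i+1,l}$ I would run through the list of generators it commutes with (as dictated by Corollary~\ref{cor:comm} and R1) and show that the images still commute. The key trick, used already in the $\d_0$ case, is that once $[\,{a_{k,i}}^{\d_i},{a_{m,l}}^{\d_i}\,]=0$ is established we may freely replace $[\cdot,\cdot]_q$ by the plain product via \eqref{equ:comm1}. The computations reduce to the identities \eqref{eqn:r0}--\eqref{eqn:ri+1}, the $q$-Jacobi relations \eqref{eq:qjacobi0}--\eqref{eq:qjacobi3}, the commutator identities \eqref{equ:comm1}--\eqref{equ:comm3}, together with the "commuting pair" identities of Lemma~\ref{lem:eqn-2} (e.g.\ \eqref{eqn:com4}) applied to the appropriate submatrix $\A_{j,k,i+1}^{i,l,m}$. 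For (R2): given a submatrix $\A_{j,k,i+1}^{k-1,i,l}$ as in \eqref{eqn-A-1} with $k\ne i+1$ and $l\ne n$ (the boundary cases $k=i+1$ or $l=n$ being simpler and checked separately), I would compute $[\d_i(a_{j,i}),[\d_i(a_{i+1,l}),\d_i(a_{j,i})]_q]_q$ and its companion, expand using \eqref{eqn:ri}--\eqref{eqn:ri+1}, apply relations (R2), (R3) to the subexpressions, then use Lemma~\ref{lem:eqn-1}(1)--(2) and Lemma (the one with \eqref{ri-1}--\eqref{ri-3}, \eqref{eqn:i1}--\eqref{eqn:i2}) to collapse everything to $f$ (resp.\ $g$) of the transformed submatrix $\d_i(\A_{j,k,i+1}^{k-1,i,l})$. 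For (R3): given $\A_{j,k,i+1}^{i,l,m}$, I would expand ${\det}_q(\d_i(\cdots))$ and ${\det}^q(\d_i(\cdots))$, push the $\d_i$-images through using \eqref{eqn:ri}--\eqref{eqn:ri+1} and \eqref{eqn:D-1}--\eqref{eqn:D-2}, invoke (R3) on the two genuinely three-indexed sub-determinants that appear, and finish with the relations \eqref{eqn:j1}--\eqref{eqn:j4} of the last lemma (the $j$-labelled identities), which are exactly the identities designed to kill the leftover terms. This shows $\d_i$ is an algebra endomorphism; the same argument gives $\d_i'$.

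Finally, to get $\d_i\d_i'=\d_i'\d_i=\id$ I would check it on generators. Outside the $(i+1)$-th row and $i$-th column both maps are the identity, so there is nothing to do there; on $a_{k,i}$ (resp.\ $a_{i+1,l}$) one computes $\d_i\d_i'(a_{k,i})=\d_i({a_{k,i}}^{\d_i'})$ and, using \eqref{eqn:ri}, \eqref{eqn:ri+1} together with relations (R2) applied to the relevant $\A_{k,i,i+1}^{i-1,i,l}$-type submatrix, sees the correction terms cancel exactly as in the displayed $\d_0\d_0'$ computation (the identity $a_{1,i}+[[a_{2,n},a_{1,i}]_q,a_{2,n}]_q-f(\cdots)=a_{1,i}$ there is the template). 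Hence $\d_i\d_i'=\id$, and symmetrically $\d_i'\d_i=\id$, so each $\d_i$ is an automorphism with inverse $\d_i'$.

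I expect the main obstacle to be the (R2) and (R3) verifications: the expansions are long, and the delicate point is organising the intermediate expressions so that the auxiliary identities in Lemma~\ref{lem:eqn-1}, Lemma~\ref{lem:eqn-2}, and the two final lemmas apply verbatim to the correctly-indexed submatrices. In particular one must be careful that the submatrix attached to a generator of the $i$-th column/$(i+1)$-th row is $\A_{j,k,i+1}^{k-1,i,l}$ or $\A_{j,k,i+1}^{i,l,m}$ with the index $i$ playing the structural role that $1$ (via the column $a_{1,\ast}$ and the "far corner" $n$) played in the $\d_0$ proof; matching up the roles of $a_{i,i}$, $a_{i+1,i+1}$, $a_{i,i+1}$ here against $a_{1,1}$, $a_{1,n}$, $a_{2,n}$ there is the bookkeeping that makes the whole argument go through. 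Since all the needed auxiliary identities have been assembled in the preceding lemmas precisely for this purpose, the proof is a (lengthy but) routine adaptation, and I would present only the representative $\d_i$ computation in each of the three cases, remarking that the remaining ones and the $\d_i'$ statements follow identically.
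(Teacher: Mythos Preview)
Your proposal is correct and follows essentially the same route as the paper: reduce to the relations involving the $i$-th column and $(i+1)$-th row, verify (R1) via the commutator identities of Lemma~\ref{lem:eqn-2}, verify (R2) for $\A_{j,k,i+1}^{k-1,i,l}$ using \eqref{ri-1}--\eqref{ri-3} and \eqref{eqn:i1}--\eqref{eqn:i2}, verify (R3) for $\A_{j,k,i+1}^{i,l,m}$ using \eqref{eqn:j1}--\eqref{eqn:j4}, and then check $\d_i\d_i'=\id$ on generators via (R2). Two small slips to fix before writing it up: in the (R2) step the triple you must compute is $[\d_i(a_{j,i}),[\d_i(a_{k,l}),\d_i(a_{j,i})]_q]_q$ (and its $f$-companion), not one involving $a_{i+1,l}$ in the middle slot; and the ``boundary'' cases for that submatrix are $k=i$ or $l=i+1$ (not $k=i+1$ or $l=n$), matching the paper's treatment.
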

\begin{proof}
To prove that $\d_i$ is an algebra homomorphism, it is sufficient to demonstrate that $\d_i$ keeps the relations of $\A(n).$
It is similar for $\d_i'$.
By definition of $\d_i$, we only need to show that~$\d_i$ keeps those relations that concern with the $(i+1)$-th row's generators $a_{i+1, k}$ and $i$-column's generators $a_{j,i}$.

The relations (R1):
Noting that $a_{i,i}$ and $a_{i+1,i+1}$ belong to the center of $\A(n)$, we easily see that
$\d_i\lrb{a_{i,i}}=a_{i+1,i+1}, \d_i\lrb{a_{i+1,i+1}}=a_{i,i}.$
 Consequently, $\d_i$ must keep the relations associated with these elements.

If $\lr{a_{j,i}, a_{k,l}}=0$ and $k>j$, we have for $l=i$ that
 \begin{align*}
 [\d_i (a_{j,i}), \d_i(a_{k,i} ) ] &
 \xlongequal[\quad]{\eqref{eqn:ri}} \bigl[- [a_{i,i+1}, a_{j,i} ]_q+a_{i+1,i+1}a_{j,i-1}, {a_{k,i}}^{\d_i} \bigr] \\
 &\xlongequal{\eqref{eqn:3b}} \bigl[ [a_{k,i+1}, [a_{j,i-1}, a_{k,i} ]_q ]_q -a_{k,i-1} [a_{k,i+1}, a_{j,i} ]_q \\
 &\qquad \quad -a_{j,k-1} ( [a_{i,i+1}, a_{k,i} ]_q-a_{i+1,i+1}a_{k,i-1} ) , {a_{k,i}}^{\d_i} \bigr]\\
& \xlongequal{\eqref{eqn:ri}} \bigl[ [a_{k,i+1}, [a_{j,i-1}, a_{k,i} ]_q ]_q-a_{k,i-1} [a_{k,i+1}, a_{j,i} ]_q \\
 &\qquad \quad +a_{j,k-1} \bigl({a_{k,i}}^{\d_i}-a_{i,i}a_{k,i+1} \bigr) , {a_{k,i}}^{\d_i} \bigr] \\
 &\xlongequal{(\textrm{R1})} \bigl[ [a_{k,i+1}, [a_{j,i-1}, a_{k,i} ]_q ]_q-a_{k,i-1} [a_{k,i+1}, a_{j,i} ]_q, {a_{k,i}}^{\d_i} \bigr] \\
 & \xlongequal{\quad} \bigl[ [a_{k,i+1}, [a_{j,i-1}, a_{k,i} ]_q-a_{k,i-1}a_{j,i} ]_q, {a_{k,i}}^{\d_i} \bigr] \\
 &\xlongequal{\eqref{eq:qjacobi3}} - \bigl[ \bigl[ [a_{j,i-1}, a_{k,i} ]_q- a_{k,i-1}a_{j,i}, {a_{k,i}}^{\d_i} \bigr]_q, a_{k,i+1} \bigr] \\
 &\qquad\ \ - \bigl[ \bigl[{a_{k,i}}^{\d_i}, a_{k,i+1} \bigr]_q, [a_{j,i-1}, a_{k,i} ]_q-a_{k,i-1}a_{j,i} \bigr] \\
 &\xlongequal[\eqref{eqn:com1}]{\eqref{equ:comm3}} -{a_{k,i}}^{\d_i} [ [a_{j,i-1}, a_{k,i} ]_q-a_{k,i-1}a_{j,i}, a_{k,i+1} ] \\
 &\qquad\quad \, -{a_{k,i}}^{\d_i} [a_{k,i+1}, [a_{j,i-1}, a_{k,i} ]_q-a_{k,i-1}a_{j,i} ] \\
 &\xlongequal{\eqref{eq:jacobi}} 0.
 \end{align*}
Similarly, we can prove that
$[\d_i(a_{i+1,j}), \d_i(a_{i+1,k}) ]=0$.
If $k=i+1$, we have
 \begin{align*}
 [\d_i(a_{j,i}), \d_i(a_{i+1, l}) ]&
 \xlongequal[\quad]{\eqref{eqn:ri}} \bigl[- [a_{i,i+1}, a_{j,i} ]_q+a_{i,i}a_{j,i+1}, {a_{i+1,l}}^{\d_i} \bigr]\\
 &\xlongequal{\eqref{eqn:2a}} \bigl[ [a_{i,l}, [a_{j,i+1}, a_{i+1,l} ]_q ]_q-a_{i+2,l} [a_{i,l}, a_{j,i} ]_q \\
 &\qquad\ \ -a_{j,l} ( [a_{i,i+1}, a_{i+1,l} ]_q+a_{i,i}a_{i+2,l} ), {a_{i+1,l}}^{\d_i} \bigr] \\
 &\xlongequal{\eqref{eqn:ri}} \bigl[ [a_{i,l}, [a_{j,i+1}, a_{i+1,l} ]_q ]_q-a_{i+2,l} [a_{i,l}, a_{j,i} ]_q\\
&\qquad \ \ +a_{j,l} ({a_{i+1,l}}^{\d_i}-a_{i+1,i+1}a_{i,l} ), {a_{i+1,l}}^{\d_i} \bigr] \\
& \xlongequal{(R1)} \bigl[ [a_{i,l}, [a_{j,i+1}, a_{i+1,l} ]_q ]_q-a_{i+2,l} [a_{i,l}, a_{j,i} ]_q, {a_{i+1,l}}^{\d_i} \bigr] \\
 & \xlongequal{\quad} \bigl[ [a_{i,l}, [a_{j,i+1}, a_{i+1,l} ]_q -a_{i+2,l}a_{j,i} ]_q, {a_{i+1,l}}^{\d_i} \bigr] \\
& \xlongequal{\eqref{eq:qjacobi3}} - \bigl[ \bigl[ [a_{j,i+1}, a_{i+1,l} ]_q -a_{i+2,l}a_{j,i}, {a_{i+1,l}}^{\d_i} \bigr]_q, a_{i,l} \bigr] \\
& \qquad\ \ - \bigl[ \bigl[{a_{i+1,l}}^{\d_i} , a_{i,l} \bigr]_q, [a_{j,i+1}, a_{i+1,l} ]_q -a_{i+2,l}a_{j,i} \bigr] \\
 &\xlongequal[\eqref{eqn:com3}]{\eqref{equ:comm3}} -{a_{i+1,l}}^{\d_i} [ [a_{j,i+1}, a_{i+1,l} ]_q -a_{i+2,l}a_{j,i}, a_{i,l} ]\\
 &\qquad\quad\, -{a_{i+1,l}}^{\d_i} [a_{i,l}, [a_{j,i+1}, a_{i+1,l} ]_q -a_{i+2,l}a_{j,i} ] \\
& \xlongequal{\eqref{eq:jacobi}} 0.
 \end{align*}
 If $\d_i(a_{k,l})=a_{k,l}$, we have
\begin{eqnarray*}
\lr{\d_i\lrb{a_{j,i}}, \d_i\lrb{a_{k,l}}}
=-[\lr{a_{i,i+1}, a_{j,i}}_q, a_{k,l}]+\lr{a_{i+1,i+1}a_{j,i-1}, a_{k,l}}+\lr{a_{i,i}a_{j,i+1}, a_{k,l}}=0.
 \end{eqnarray*}
Similarly, we can prove that
$
\lr{\d_i\lrb{a_{i+1,j}}, \d_i\lrb{a_{k,l}}}=0$.

The relations (R2):
If we have chosen the submatrix \smash{$\A_{j,k,i+1}^{k-1,i,l}$} as $\eqref{eqn-A-1}$, the proof deduce to the simpler case when $k=i$ or $l=i+1$ and straightforward.
In more tedious calculations, we concentrate on exploring the general cases, where $k \neq i$ and $l \neq i+1$,
\begin{gather*}
 [\d_i (a_{k, l} ), [\d_i (a_{j, i} ),\d_i (a_{k, l} ) ]_q ]_q\\
\qquad \xlongequal{\eqref{eqn:ri}} - [ a_{k, l} , [ [a_{i, i+1}, a_{j, i} ]_q , a_{k, l} ]_q ]_q +a_{i+1, i+1} [ a_{k, l} , [a_{j, i-1} , a_{k, l} ]_q ]_q\\
 \qquad\qquad \quad{} +a_{i,i} [ a_{k, l} , [a_{j, i+1} , a_{k, l} ]_q ]_q\\
\qquad \xlongequal{\eqref{rela-2}} {}- [ a_{k, l} , [ [a_{i, i+1}, a_{j, i} ]_q , a_{k, l} ]_q ]_q +a_{i+1, i+1}f\bigl(\A_{j,k,i}^{k-1,i-1,l}\bigr)+a_{i,i}f\bigl(\A_{j,k,i+2}^{k-1,i+1,l}\bigr)\\
\qquad \xlongequal{\eqref{equ:comm2}} - [a_{i, i+1}, [a_{k, l}, [a_{j, i}, a_{k, l} ]_q ]_q ]_q +a_{i+1, i+1}f\bigl(\A_{j,k,i}^{k-1,i-1,l}\bigr) +a_{i,i}f\bigl(\A_{j,k,i+2}^{k-1,i+1,l}\bigr)\\
\qquad \xlongequal{\eqref{rela-2}} - \bigl[a_{i, i+1},f \bigl(\A_{j,k,i+1}^{k-1,i,l} \bigr) \bigr]_q +a_{i+1, i+1}f\bigl(\A_{j,k,i}^{k-1,i-1,l}\bigr) +a_{i,i}f \bigl(\A_{j,k,i+2}^{k-1,i+1,l} \bigr)\\
 \qquad\xlongequal{\eqref{eqn:natural}} - [a_{i, i+1}, a_{j, i} ]_q+a_{j, i-1}a_{i+1, i+1}+a_{i,i}a_{j, i+1} \\
 \qquad\qquad \quad {} +a_{j,k-1} a_{k, l} (- [a_{i, i+1}, a_{i+1, l} ]_q+a_{i,i}a_{i+2,l}+a_{i+1, i+1}a_{i, l} ) \\
 \qquad\qquad \quad{} + a_{j, l}a_{k, l} (- [a_{i, i+1}, a_{k,i} ]_q+a_{k, i-1}a_{i+1, i+1}+a_{i,i}a_{k, i+1} ) \\
 \qquad\qquad\quad{} -a_{j,k-1} (- [a_{i, i+1}, a_{k,i} ]_q+a_{k, i-1}a_{i+1, i+1}+a_{i,i}a_{k, i+1} )\\
 \qquad\qquad \quad{} -a_{j, l} (- [a_{i, i+1}, a_{i+1, l} ]_q+a_{i,i}a_{i+2,l}+a_{i+1, i+1}a_{i, l} ) \\
\qquad \xlongequal{\eqref{eqn:ri}} {a_{j, i}}^{\d_i} +a_{j,k-1}a_{k, l}{a_{i+1, l}}^{\d_i}+a_{j, l}a_{k,l} {a_{k,i}}^{\d_i}-a_{j,k-1}{a_{k,i}}^{\d_i} -a_{j, l}{a_{i+1, l}}^{\d_i}\\
 \qquad\xlongequal{\quad} \d_i (a_{j, i} ) +\d_i (a_{j,k-1} )\d_i (a_{i+1,l} ) \d_i (a_{k, l} )+\d_i (a_{k,i} )\d_i (a_{j, l} )\d_i (a_{k, l} ) \\
\qquad\qquad {}-\d_i (a_{j,k-1} )\d_i (a_{k,i} ) -\d_i (a_{j, l} ) \d_i (a_{i+1, l} )
\end{gather*}
and
\begin{gather*}
 [\d_i(a_{j,i}), [\d_i (a_{k,l} ),\d_i(a_{j,i}) ]_q ]_q \\
 \qquad\xlongequal[\quad]{\eqref{eqn:ri}} - \bigl[{a_{j,i}}^{\d_i}, [a_{i,i+1}, [a_{k,l},a_{j,i} ]_q ]_q \bigr]_q + a_{i+1,i+1} \bigl[{a_{j,i}}^{\d_i}, [a_{k,l},a_{j,i-1} ]_q ]_q \\
\qquad\qquad\ \ {}+ a_{i,i} \bigl[{a_{j,i}}^{\d_i}, [a_{k,l},a_{j,i+1} ]_q \bigr]_q \\
 \qquad\xlongequal{\eqref{eq:qjacobi1}} - \bigl[ \bigl[{a_{j,i}}^{\d_i},a_{i,i+1} \bigr]_q, [a_{k,l},a_{j,i} ]_q \bigr]_q +\frac{1}{\bigl(q-q^{-1}\bigr)^2} \bigl[a_{i,i+1}, \bigl[ [a_{k,l},a_{j,i} ]_q, {a_{j,i}}^{\d_i} \bigr] \bigr]\\
\qquad\qquad \ \ {}+ a_{i+1,i+1} \bigl[{a_{j,i}}^{\d_i}, [a_{k,l},a_{j,i-1} ]_q \bigr]_q + a_{i,i} \bigl[{a_{j,i}}^{\d_i}, [a_{k,l},a_{j,i+1} ]_q \bigr]_q \\
\qquad \xlongequal[\eqref{rela-2}]{\eqref{eqn:ri}} [a_{j,i}-a_{j,i-1}a_{i,i}-a_{i+1,i+1}a_{j,i+1}, [a_{k,l},a_{j,i} ]_q ]_q + a_{i+1,i+1} \bigl[{a_{j,i}}^{\d_i}, [a_{k,l},a_{j,i-1} \bigr]_q ]_q \\
\qquad\qquad \quad {}+ a_{i,i} \bigl[{a_{j,i}}^{\d_i}, [a_{k,l},a_{j,i+1} ]_q \bigr]_q +\frac{1}{\bigl(q-q^{-1}\bigr)^2} \bigl[a_{i,i+1}, \bigl[ [a_{k,l},a_{j,i} ]_q, {a_{j,i}}^{\d_i} \bigr] \bigr]\\
\qquad \xlongequal{\quad} [a_{j,i}, [a_{k,l},a_{j,i} ]_q ]_q -a_{i,i} [a_{j,i-1}, [a_{k,l},a_{j,i} ]_q ]_q-a_{i+1,i+1} [a_{j,i+1}, [a_{k,l},a_{j,i} ]_q ]_q\\
 \qquad\qquad {}+ a_{i+1,i+1} \bigl[{a_{j,i}}^{\d_i}, [a_{k,l},a_{j,i-1} ]_q \bigr]_q + a_{i,i} \bigl[{a_{j,i}}^{\d_i}, [a_{k,l},a_{j,i+1} ]_q \bigr]_q\\
 \qquad\qquad {}+\frac{1}{\bigl(q-q^{-1}\bigr)^2} \bigl[a_{i,i+1}, \bigl[ [a_{k,l},a_{j,i} ]_q,{a_{j,i}}^{\d_i} \bigr] \bigr] \\
\qquad \xlongequal[\eqref{rela-2}]{\eqref{eqn:sharp}} a_{k,l}-a_{j,k-1}a_{j,l}-a_{i,i} [a_{j,i-1}, [a_{k,l},a_{j,i} ]_q ]_q+ a_{i,i} \bigl[{a_{j,i}}^{\d_i}, [a_{k,l},a_{j,i+1} ]_q \bigr]_q \\
 \qquad\qquad\quad {}-a_{i+1,i+1} [a_{j,i+1}, [a_{k,l},a_{j,i} ]_q ]_q+ a_{i+1,i+1} \bigl[{a_{j,i}}^{\d_i}, [a_{k,l},a_{j,i-1} ]_q \bigr]_q-a_{k,i}a_{i+1,l}\\
 \qquad\qquad\quad {}+a_{j,i}a_{k,i}a_{j,l}+a_{j,i}a_{j,k-1}a_{i+1,l} +\frac{1}{\bigl(q-q^{-1}\bigr)^2} \bigl[a_{i,i+1}, \bigl[ [a_{k,l},a_{j,i} ]_q, {a_{j,i}}^{\d_i} \bigr] \bigr]\\
\qquad \xlongequal[\eqref{ri-3}]{\substack{{\eqref{ri-1}\eqref{ri-2}}}} a_{k,l} -a_{j,k-1}a_{j,l}-a_{i,i} [a_{j,i-1}, [a_{k,l},a_{j,i} ]_q ]_q+ a_{i,i} \bigl[{a_{j,i}}^{\d_i}, [a_{k,l},a_{j,i+1} ]_q \bigr]_q \\
 \qquad\qquad\qquad {}-a_{i+1,i+1} [a_{j,i+1}, [a_{k,l},a_{j,i} ]_q ]_q +a_{i+1,i+1} \bigl[{a_{j,i}}^{\d_i}, [a_{k,l},a_{j,i-1} ]_q \bigr]_q \\
 \qquad\qquad \qquad{}- \bigl({a_{k,i}}^{\d_i}{a_{i+1,l}}^{\d_i} +a_{k,i-1}a_{i,i}a_{i+1,l}-{a_{k,i}}^{\d_i}a_{i,i}a_{i+2,l} +a_{i+1,i+1} [a_{k,i+1},a_{i+1,l} ]_q \\
 \qquad\qquad\qquad {}-a_{i+1,i+1} \bigl[{a_{k,i}}^{\d_i},a_{i,l} \bigr]_q \bigr) + \bigl({a_{j,i}}^{\d_i}{a_{k,i}}^{\d_i}+a_{i+1,i+1}a_{j,i+1}a_{k,i} +a_{i,i} [a_{j,i-1},a_{k,i} ]_q \\
 \qquad\qquad \qquad {}-{a_{j,i}}^{\d_i}a_{k,i-1}a_{i+1,i+1} -a_{i,i} \bigl[{a_{j,i}}^{\d_i}, a_{k,i+1} \bigr]_q \bigr)a_{j,l} +a_{j,k-1} \bigl({a_{j,i}}^{\d_i}{a_{i+1,l}}^{\d_i} \\
 \qquad\qquad \qquad {}+a_{j,i-1}a_{i,i}a_{i+1,l}+{a_{j,i}}^{\d_i}a_{i,i}a_{i+2,l} +a_{i+1,i+1} [a_{j,i+1},a_{i+1,l} ]_q\\
 \qquad\qquad \qquad{}-a_{i+1,i+1} \bigl[{a_{j,i}}^{\d_i},a_{i,l} \bigr]_q \bigr) +\frac{1}{\bigl(q-q^{-1}\bigr)^2} \bigl( \bigl[a_{i,i+1}, [a_{k,l},a_{j,i} ]_q -a_{j,l}a_{k,i}\\
 \qquad\qquad \qquad{}-a_{j,k-1}a_{i+1,l} + \bigl[a_{i+1,l},{a_{k,i}}^{\d_i} \bigr] \bigr] \bigr)\\
\qquad \xlongequal{\eqref{eq:qjacobi0} } a_{k,l}-a_{j,k-1}a_{j,l}-{a_{k,i}}^{\d_i}{a_{i+1,l}}^{\d_i} +{a_{j,i}}^{\d_i}{a_{k,i}}^{\d_i}a_{j,l}+{a_{j,i}}^{\d_i}a_{j,k-1}{a_{i+1,l}}^{\d_i}\\
 \qquad\qquad\ \ {}+a_{i,i} \bigl( [{a_{j,i}}^{\d_i}, [a_{k,l},a_{j,i+1} ]_q -a_{j,l}a_{k,i+1} -a_{j,k-1}a_{i+2,l} ]_q+{a_{k,i}}^{\d_i}a_{i+2,l} \bigr)\\
 \qquad\qquad\ \ {}-a_{i,i} ( [a_{j,i-1}, [a_{k,l},a_{j,i} ]_q -a_{j,k-1}a_{i+1,l} -a_{j,l}a_{k,i} ]_q+a_{k,i-1}a_{i+1,l} )\\
 \qquad\qquad\ \ {}+a_{i+1,i+1} \bigl( \bigl[{a_{j,i}}^{\d_i}, [a_{k,l},a_{j,i-1} ]_q -a_{k,i-1}a_{j,l}-a_{j,k-1}a_{i,l} \bigr]_q+ \bigl[{a_{k,i}}^{\d_i},a_{i,l} \bigr]_q \bigr)\\
\qquad\qquad \ \ {}-a_{i+1,i+1} \bigl( [a_{j,i+1}, [a_{k,l},a_{j,i} ]_q -a_{j,k-1}a_{i+1,l}-a_{k,i}a_{j,l} ]_q + [a_{k,i+1},a_{i+1,l} ]_q \bigr)\\
 \qquad\xlongequal[\eqref{eqn:i2}]{\eqref{eqn:i1}} a_{k,l}-a_{j,k-1}a_{j,l} -{a_{k,i}}^{\d_i}{a_{i+1,l}}^{\d_i} +{a_{j,i}}^{\d_i}{a_{k,i}}^{\d_i}a_{j,l} +{a_{j,i}}^{\d_i}a_{j,k-1}{a_{i+1,l}}^{\d_i}\\
\qquad \xlongequal{\quad} \d_i \lrb{a_{k,l} } -\d_i \lrb{a_{j,k-1}}\d_i\lrb{a_{j,l}} -\d_i\lrb{a_{k,i}}\d_i\lrb{a_{i+1,l}} +\d_i\lrb{a_{j,i}}\d_i\lrb{a_{k,i}}\d_i\lrb{a_{j,l}}\\
\qquad\qquad{}+\d_i\lrb{a_{j,i}}\d_i\lrb{a_{j,k-1}}\d_i\lrb{a_{i+1,l}},
\end{gather*}
we have
\begin{gather*}
[\d_i\lrb{a_{k, l}},\lr{\d_i\lrb{a_{j, i}}, \d_i\lrb{a_{k, l}}}_q]_q
=f\bigl(\d_i\bigl(\A_{j,k,i+1}^{k-1,i,l}\bigr)\bigr), \\
[\d_i{a_{j,i}},\lr{\d_i\lrb{a_{k,l}}, \d_i\lrb{a_{j,i}}}_q]_q
= g\bigl(\d_i\bigl(\A_{j,k,i+1}^{k-1,i,l}\bigr)\bigr).
\end{gather*}

Similarly, we can get that
\begin{gather*}
[\d_i\lrb{a_{i+1, l}},\lr{\d_i\lrb{a_{j, k}},\d_i\lrb{a_{i+1, l}}}_q]_q
=f\bigl(\d_i\bigl(\A_{j,i+1,k+1}^{i,k,l}\bigr)\bigr), \\
[\d_i\lrb{a_{j,k}},\lr{\d_i\lrb{a_{i+1,l}},\d_i\lrb{a_{j,k}}}_q]_q
= g\bigl(\d_i\bigl(\A_{j,i+1,k+1}^{i,k,l}\bigr)\bigr),\\
[\d_i\lrb{a_{k,i}},\lr{\d_i\lrb{a_{j,l}},\d_i\lrb{a_{k,i}}}_q]_q
=f\bigl(\d_i\bigl(\A_{j,k,l+1}^{k-1,l,i}\bigr)\bigr), \\
[\d_i\lrb{a_{j, l}},\lr{\d_i\lrb{a_{k, i}},\d_i\lrb{a_{j, l}}}_q]_q
= g\bigl(\d_i\bigl(\A_{j,k,l+1}^{k-1,l,i}\bigr)\bigr),\\
[\d_i\lrb{a_{k, l}},\lr{\d_i\lrb{a_{i+1, j}},\d_i\lrb{a_{k, l}}}_q]_q
=f\bigl(\d_i\bigl(\A_{i+1,k,j+1}^{k-1,j,l}\bigr)\bigr), \\
[\d_i\lrb{a_{i+1,j}},\lr{\d_i\lrb{a_{k,l}},\d_i\lrb{a_{i+1,j}}}_q]_q
= g\bigl(\d_i\bigl(\A_{i+1,k,j+1}^{k-1,j,l}\bigr)\bigr).
\end{gather*}

The relations (R3): Choosing the submatrix \smash{$\A_{j,k,i+1}^{i,l,m}$} as $\eqref{eqn-A-2}$, we explore the cases when $k\neq i$ and $l \neq i+1$ and have that
\begin{gather*}
 \lr{\d_i\lrb{a_{j,l}},\d_i\lrb{a_{k,m}}}_q +[\lr{\d_i\lrb{a_{j,i}},\d_i\lrb{a_{k,l}}}_q, \d_i\lrb{a_{i+1,m}}]_q+\d_i\lrb{a_{k,i}}\d_i\lrb{a_{i+1,l}}\d_i\lrb{a_{j,m}} \\
 \quad {}-\d_i\lrb{a_{k,i}}\lr{\d_i\lrb{a_{j,l}}, \d_i\lrb{a_{i+1,m}}}_q -\d_i\lrb{a_{i+1,l}}\lr{\d_i\lrb{a_{j,i}},\d_i\lrb{a_{k,m}}}_q -\d_i\lrb{a_{k,l}}\d_i\lrb{a_{j,m}}\\
\quad\qquad \xlongequal[\eqref{eqn:ri+1}]{\eqref{equ:comm2}} [a_{j,l},a_{k,m} ]_q - \bigl[{a_{j,i}}^{\d_i}, [a_{i,i+1}, [a_{k,l}, a_{i+1, m} ]_q ]_q ]_q+ [ [{a_{j,i}}^{\d_i}, a_{k,l} ]_q,a_{i,i}a_{i+1,m} ]_q\\
 \qquad\qquad\qquad\ \ {}+ \bigl[ \bigl[{a_{j,i}}^{\d_i},a_{k,l} \bigr]_q, a_{i+2,i+1}a_{i,m} \bigr]_q+a_{j,m}{a_{k,i}}^{\d_i}{a_{i+1,l}}^{\d_i} -{a_{k,i}}^{\d_i} [a_{j,l},{a_{i+1,m}}^{\d_i} ]_q \\
\quad\qquad\qquad \ \ {}-{a_{i+1,l}}^{\d_i} \bigl[{a_{j,i}}^{\d_i},a_{k,m} \bigr]_q -a_{k,l}a_{j,m} \\
\quad\qquad \xlongequal{\eqref{eq:qjacobi1}} [a_{j,l},a_{k,m} ]_q-a_{k,l}a_{j,m} - \bigl[ \bigl[{a_{j,i}}^{\d_i},a_{i,i+1} \bigr]_q, [a_{k,l},a_{i+1,m} ]_q \bigr]_q\\
\quad\qquad\qquad \ \ {}-\frac{1}{\bigl(q-q^{-1}\bigr)^2} \bigl[a_{i,i+1}, \bigl[ [a_{k,l},a_{i+1,m} ]_q,{a_{j,i}}^{\d_i} \bigr] \bigr] +a_{i,i} \bigl[{a_{j,i}}^{\d_i}, [a_{k,l},a_{i+2,m} ]_q \bigr]_q\\
\quad\qquad\qquad \ \ {}+a_{i+1,i+1} \bigl[{a_{j,i}}^{\d_i}, [a_{k,l},a_{i,m} ]_q \bigr]_q - \bigl[a_{j,l},{a_{k,i}}^{\d_i}{a_{i+1,m}}^{\d_i} \bigr]_q\\
 \quad\qquad\qquad\ \ {}- \bigl[{a_{i+1,l}}^{\d_i} {a_{j,i}}^{\d_i},a_{k,m} \bigr]_q +a_{j,m}{a_{k,i}}^{\d_i}{a_{i+1,l}}^{\d_i} \\
\quad\qquad \xlongequal[\substack{{\eqref{ri-3}\eqref{ri-4}}}] {\substack{{\eqref{rela-2}\eqref{ri-1}}}} [a_{j,l},a_{k,m} ]_q-a_{k,l}a_{j,m} + [a_{j,i}, [a_{k,l},a_{i+1,m} ]_q ]_q \\
\quad\qquad\qquad \qquad\ \ {}-a_{i,i} [a_{j,i-1}, [a_{k,l},a_{i+1,m} ]_q ]_q -a_{i+1,i+1} [a_{j,i+1}, [a_{k,l},a_{i+1,m} ]_q ]_q\\
\qquad\qquad\qquad \qquad\ \ {}-\frac{1}{\bigl(q-q^{-1}\bigr)^2} \bigl[a_{i,i+1}, \bigl[ [a_{k,l},a_{i+1,m} ]_q, {a_{j,i}}^{\d_i} \bigr] \bigr]+a_{i,i} \bigl[{a_{j,i}}^{\d_i}, [a_{k,l},a_{i+2,m} ]_q \bigr]_q \!\\
 \quad\qquad\qquad\qquad\ \ {}+ a_{i+1,i+1} \bigl[{a_{j,i}}^{\d_i}, [a_{k,l},a_{i,m} ]_q \bigr]_q- \bigl[a_{j,l},a_{k,i}a_{i+1,m}-a_{k,i-1}a_{i,i}a_{i+1,m} \\
 \quad\qquad\qquad\qquad\ \ {}+ {a_{k,i}}^{\d_i}a_{i,i}a_{i+2,m} -a_{i+1,i+1} [a_{k,i+1},a_{i+1,m} ]_q + a_{i+1,i+1} \bigl[{a_{k,i}}^{\d_i},a_{i,m} \bigr]_q \\
\quad\qquad\qquad \qquad\ \ {}-\frac{1}{\bigl(q-q^{-1}\bigr)^2} \bigl[a_{i,i+1}, \bigl[a_{i+1,m}, {a_{k,i}}^{\d_i} \bigr] \bigr] \bigr]_q - \bigl[a_{j,i}a_{i+1,l} -a_{j,i-1}a_{i,i}a_{i+1,l} \\
 \quad\qquad\qquad\qquad\ \ {}+{a_{j,i}}^{\d_i}a_{i,i}a_{i+2,l} -a_{i+1,i+1} [a_{j,i+1},a_{i+1,l} ]_q +a_{i+1,i+1} \bigl[{a_{j,i}}^{\d_i},a_{i,l} \bigr]_q \\
\quad\qquad\qquad \qquad\ \ {}-\frac{1}{\bigl(q-q^{-1}\bigr)^2} \bigl[a_{i,i+1}, \bigl[a_{i+1,l},{a_{j,i}}^{\d_i} \bigr] \bigr],a_{k,m} \bigr]_q +a_{j,m} (a_{k,i}a_{i+1,l} \\
 \quad\qquad\qquad\qquad\ \ {}-a_{k,i-1}a_{i,i}a_{i+1,l}+{a_{k,i}}^{\d_i}a_{i,i}a_{i+2,l} -a_{i+1,i+1} [a_{k,i+1},a_{i+1,l} ]_q\\
\quad\qquad\qquad\qquad\ \ {}+a_{i+1,i+1} \bigl[{a_{k,i}}^{\d_i},a_{i,l} \bigr]_q +\bigl[a_{i,i+1}, \bigl[a_{i+1,l},{a_{k,i}}^{\d_i} \bigr] \bigr] )\\
\quad\qquad \xlongequal[\eqref{eqn:2b}\eqref{eq:qjacobi0}]{\eqref{rela-3}} {\det}_q\bigl(\A_{j,k,i+1}^{i,l,m}\bigr)-a_{i,i} \bigl( [a_{j,i-1}, [a_{k,l}, a_{i+1,m} ]_q -a_{i+1,l}a_{k,m} ]_q \\
 \quad\qquad\qquad\qquad\ \ {}-a_{k,i-1} ( [a_{j,l},a_{i+1,m} ]_q- a_{i+1,l}a_{j,m} )- \bigl[{a_{j,i}}^{\d_i} [a_{k,l},a_{i+2,m} ]_q +a_{i+2,l}a_{k,m} \bigr]_q\!\\
 \quad\qquad\qquad\qquad\ \ {}+{a_{k,i}}^{\d_i} ( [a_{j,l},a_{i+2,m} ]_q + a_{i+2,l}a_{j,m} ) \bigr)-a_{i+1,i+1} \bigl( [a_{j,i+1}, [a_{k,l},a_{i+1,m} ]_q\\
 \quad\qquad\qquad\qquad\ \ {}- [a_{i+1,l}, a_{k,m} ]_q ]_q- \bigl[{a_{j,i}}^{\d_i} [a_{k,l},a_{i,m} ]_q - [a_{i,l},a_{k,m} ]_q \bigr]_q
\\
 \quad\qquad\qquad\qquad\ \ {}- [a_{k,i+1}, [a_{j,l},a_{i+1,m} ]_q-a_{j,m}a_{i+1,l} ]_q + \bigl[{a_{k,i}}^{\d_i}a_{j,m}a_{i,l} - [a_{j,l},a_{i,m} ]_q \bigr]_q \bigr)\\
 \quad\qquad\qquad\qquad\ \ {}+a_{i,i}a_{l+1,m} \bigl(- \bigl[{a_{j,i}}^{\d_i}a_{k,i+1} \bigr]_q+{a_{k,i}}^{\d_i}a_{j,i+1} + [a_{j,i-1},a_{k,i} ]_q-a_{j,i}a_{k,i-1} \bigr)\\
 \quad\qquad\qquad\qquad\ \ {} -a_{i+1,i+1}a_{l+1,m} \bigl(a_{k,i}a_{j,i+1}-a_{k,i-1}{a_{j,i}}^{\d_i}- [a_{k,i+1},a_{j,i} ]_q+ \bigl[{a_{k,i}}^{\d_i} a_{j,i-1} \bigr]_q \bigr)\\
 \quad\qquad\xlongequal{\eqref{rela-3}} a_{i,i} \bigl( \bigl[{a_{j,i}}^{\d_i} [a_{k,l},a_{i+2,m} ]_q -a_{k,i+1}a_{l+1,m}-a_{i+2,l}a_{k,m} \bigr]_q \\[-0.5mm]
\quad\qquad\qquad\quad {} -{a_{k,i}}^{\d_i} ( [a_{j,l},a_{i+2,m} ]_q
-a_{j,i+1}a_{l+1,m} {}-a_{i+2,l}a_{j,m} ) \bigr)\\
 \quad\qquad\qquad\quad {}-a_{i,i} ( [a_{j,i-1}, [a_{k,l},a_{i+1,m} ]_q
-a_{k,i}a_{l+1,m} -a_{i+1,l}a_{k,m} ]_q \\
\quad\qquad\qquad\quad {}-a_{k,i-1} ( [a_{j,l},a_{i+1,m} ]_q-a_{j,i}a_{l+1,m} -a_{i+1,l}a_{j,m} ) )\\
 \quad\qquad\qquad\quad {}-a_{i+1,i+1} ( [a_{j,i+1}, [a_{k,l},a_{i+1,m} ]_q-a_{k,i} a_{l+1,m}-a_{i+1,l}a_{k,m} ]_q \\
\quad\qquad\qquad \quad {}- [a_{k,i+1}, [a_{j,l},a_{i+1,m} ]_q-a_{j,i}a_{l+1,m} -a_{i+1,l}a_{j,m} ]_q )\\
 \quad\qquad\qquad \quad {}+a_{i+1,i+1} \bigl( \bigl[{a_{k,i}}^{\d_i} [a_{j,l},a_{i,m} ]_q -a_{j,i-1}a_{l+1,m}-a_{i,l}a_{j,m} \bigr]_q \\
\quad\qquad\qquad\quad {}- \bigl[{a_{j,i}}^{\d_i} [a_{k,l},a_{i,m} ]_q-a_{k,i-1}a_{l+1,m} -a_{i,l}a_{k,m} \bigr]_q \bigr)\\[-0.5mm]
\quad\qquad \xlongequal[\eqref{rela-3}]{\eqref{eqn:2b}} - a_{i,i} (a_{k,i-1}(\lr{a_{j,l}, a_{i+1,m}}_q-a_{k,i-1}a_{i+1,l}a_{j,m})+[a_{j,i-1}, \lr{a_{k,l},a_{i+1,m}}_q\\[-0.5mm]
\quad\qquad\qquad \quad {}-a_{i+1,l}a_{k,m}]_q -{a_{k,i}}^{\d_i}(\lr{a_{j,l},a_{i+2,m}}_q-a_{i+2,l}a_{j,m})-\bigl[{a_{j,i}}^{\d_i}, a_{i+2,l}a_{k,m}\\
 \quad\qquad\qquad\quad {}-\lr{a_{k,l},a_{i+2,m}}_q\bigr]_q \bigr)+a_{i+1,i+1} \bigl([a_{j,i+1}, a_{i+1,l}a_{k,m} -\lr{a_{k,l},a_{i+1,m}}_q]_q \\
\quad\qquad\qquad\quad {}+[a_{k,i+1},\lr{a_{j,l},a_{i+1,m}}_q -a_{i+1,l}a_{j,m}]_q -\bigl[{a_{j,i}}^{\d_i},a_{i,l}a_{k,m}-\lr{a_{k,l},a_{i,m}}_q\bigr]_q \\
\quad\qquad\qquad\quad {}-\bigl[{a_{k,i}}^{\d_i},\lr{a_{j,l},a_{i,m}}_q-a_{i,l}a_{j,m}\bigr]_q \bigr)\\[-0.5mm]
\quad\qquad \xlongequal[\eqref{eqn:j2}]{\eqref{eqn:j1}} 0
\end{gather*}
and
\begin{gather*}
 \lr{\d_i\lrb{a_{k,m}},\d_i\lrb{a_{j,l}}}_q +[\lr{\d_i\lrb{a_{i+1,m}},\d_i\lrb{a_{k,l}}}_q,\d_i\lrb{a_{j,i}}]_q -\d_i\lrb{a_{i+1,l}}\lr{\d_i\lrb{a_{k,m}},\d_i\lrb{a_{j,i}}}_q\\
 \quad {}-\d_i\lrb{a_{k,i}}\lr{\d_i\lrb{a_{i+1,m}},\d_i\lrb{a_{j,l}}}_q +\d_i\lrb{a_{i+1,l}}\d_i\lrb{a_{k,i}}\d_i\lrb{a_{j,m}}-\d_i\lrb{a_{k,l}}\d_i\lrb{a_{j,m}} \\[-0.5mm]
\quad\qquad\xlongequal[\eqref{eqn:ri}]{\eqref{equ:comm2}} [a_{k,m},a_{j,l} ]_q
- \bigl[ \bigl[ \bigl[{a_{i+1,m}}^{\d_i} ,a_{i,i+1} \bigr]_q,a_{k,l} \bigr]_q ,a_{j,i} \bigr]_q
\\[-0.5mm]
 \quad\qquad\qquad\quad {}+ \bigl[ \bigl[{a_{i+1,m}}^{\d_i} ,a_{k,l} \bigr]_q, a_{i+1,i+1}a_{j,i-1} \bigr]_q+ \bigl[ \bigl[{a_{i+1,m}}^{\d_i} ,a_{k,l} \bigr]_q, a_{i,i}a_{j,i+1} \bigr]_q
\\
 \quad\qquad\qquad\quad {}-{a_{i+1,l}}^{\d_i} \bigl[a_{k,m},{a_{j,i}}^{\d_i} \bigr]_q -{a_{k,i}}^{\d_i} \bigl[{a_{i+1,m}}^{\d_i} ,a_{j,l} \bigr]_q +{a_{i+1,l}}^{\d_i} {a_{k,i}}^{\d_i} a_{j,m}-a_{k,l}a_{j,m}\\[-0.5mm]
\quad\qquad \xlongequal{\eqref{eq:qjacobi1}} [a_{k,m},a_{j,l} ]_q-a_{k,l}a_{j,m} - \bigl[ \bigl[{a_{i+1,m}}^{\d_i} ,a_{i,i+1} \bigr]_q, [a_{k,l},a_{j,i} ]_q \bigr]_q\\[-0.5mm]
\quad\qquad\qquad \quad {}-\frac{1}{\bigl(q-q^{-1}\bigr)^2} \bigl[a_{i,i+1}, \bigl[ [a_{k,l},a_{j,i} ]_q, {a_{i+1,m}}^{\d_i} \bigr] \bigr] \\
 \quad\qquad\qquad\quad {}+a_{i+1,i+1} \bigl[{a_{i+1,m}}^{\d_i}, [a_{k,l},a_{j,i-1} ]_q \bigr]_q +a_{i,i} \bigl[{a_{i+1,m}}^{\d_i} , [a_{k,l},a_{j,i+1} ]_q \bigr]_q \\
\quad\qquad\qquad \quad {}- \bigl[a_{k,m},{a_{i+1,l}}^{\d_i} {a_{j,i}}^{\d_i} \bigr]_q - \bigl[{a_{k,i}}^{\d_i} {a_{i+1,m}}^{\d_i} ,a_{j,l} \bigr]_q +{a_{i+1,l}}^{\d_i} {a_{k,i}}^{\d_i} a_{j,m}\\
\quad\qquad \xlongequal[\eqref{ri-3}\eqref{ri-4}]{\eqref{rela-2}\eqref{ri-1}} [a_{k,m},a_{j,l} ]_q -a_{k,l}a_{j,m} + [a_{i+1,m}, [a_{k,l},a_{j,i} ]_q ]_q \\
\quad\qquad\qquad \qquad\quad {}-a_{i+1,i+1} [a_{i+2,m}, [a_{k,l},a_{j,i} ]_q ]_q -a_{i,i} [a_{i,m}, [a_{k,l},a_{j,i} ]_q ]_q\\
 \quad\qquad\qquad\qquad\quad {}-\frac{1}{\bigl(q-q^{-1}\bigr)^2} \bigl[a_{i,i+1}, \bigl[ [a_{k,l},a_{j,i} ]_q, {a_{i+1,m}}^{\d_i} \bigr] \bigr]\\
 \quad\qquad\qquad\qquad\quad {}+a_{i+1,i+1} \bigl[{a_{i+1,m}}^{\d_i} , [a_{k,l},a_{j,i-1} ]_q \bigr]_q+a_{i,i} \bigl[{a_{i+1,m}}^{\d_i} , [a_{k,l},a_{j,i+1} ]_q \bigr]_q\\
 \quad\qquad\qquad\qquad\quad {}- \bigl[a_{k,m},a_{i+1,l}a_{j,i} -a_{i+2,l}a_{i+1,i+1}a_{j,i}+{a_{i+1,l}}^{\d_i} a_{i+1,i+1}a_{j,i-1}\\
 \quad\qquad\qquad\qquad\quad {}-a_{i,i} [a_{i,l},a_{j,i} ]_q +a_{i,i} \bigl[{a_{i+1,l}}^{\d_i} ,a_{j,i+1} \bigr]_q\\
 \quad\qquad\qquad\qquad\quad {}-\frac{1}{\bigl(q-q^{-1}\bigr)^2} \bigl[a_{i,i+1}, \bigl[a_{j,i}, {a_{i+1,l}}^{\d_i} \bigr] \bigr] \bigr]_q - \bigl[a_{i+1,m}a_{k,i} \\
 \quad\qquad\qquad\qquad\quad {}-a_{i+2,m}a_{i+1,i+1}a_{k,i} +{a_{i+1,m}}^{\d_i} a_{i+1,i+1}a_{k,i-1} -a_{i,i} [a_{i,m},a_{k,i} ]_q \\
\quad\qquad\qquad \qquad \quad {}+a_{i,i} \bigl[{a_{i+1,m}}^{\d_i} ,a_{k,i+1} \bigr]_q-\frac{1}{\bigl(q-q^{-1}\bigr)^2} \bigl[a_{i,i+1}, \bigl[a_{k,i}, {a_{i+1,m}}^{\d_i} \bigr] \bigr],a_{j,l} \bigr]_q \\
\quad\qquad\qquad \qquad \quad{}+ \bigl(a_{i+1,l}a_{k,i} -a_{i+2,l}a_{i+1,i+1}a_{k,i}+{a_{i+1,l}}^{\d_i} a_{i+1,i+1}a_{k,i-1} -a_{i,i} [a_{i,l},a_{k,i} ]_q \\
\quad\qquad\qquad\qquad \quad {}+a_{i,i} \bigl[{a_{i+1,l}}^{\d_i} ,a_{k,i+1} \bigr]_q + \bigl[a_{i,i+1}, \bigl[a_{k,i},{a_{i+1,l}}^{\d_i} \bigr] \bigr] \bigr)a_{j,m}\\
\quad\qquad \xlongequal[\eqref{eqn:2a}\eqref{eq:qjacobi0}]{\eqref{rela-3}} {\det}^q\bigl(\A_{j,k,i+1}^{i,l,m}\bigr)-a_{i+1,i+1} \bigl( [a_{i+2,m}, [a_{k,l}, a_{j,i} ]_q-a_{k,i}a_{j,l} ]_q\\
 \quad\qquad\qquad\qquad\ {}-a_{i+2,l} ( [a_{k,m},a_{j,i} ]_q -a_{k,i}a_{j,m} ) - \bigl[{a_{i+1,m}}^{\d_i}, [a_{k,l}, a_{j,i-1} ]_q -a_{k,i-1}a_{j,l} \bigr]_q \\
 \quad\qquad\qquad\qquad\ {}+{a_{i+1,l}}^{\d_i} ( [a_{k,m},a_{j,i-1} ]_q -a_{k,i-1}a_{j,m} ) \bigr) -a_{i,i} \bigl( [a_{i,m}, [a_{k,l},a_{j,i} ]_q\\
\quad\qquad\qquad\qquad\ {}- [a_{k,i}, a_{j,l} ]_q ]_q - \bigl[{a_{i+1,m}}^{\d_i}, [a_{k,l},a_{j,i+1} ]_q - [a_{k,i+1},a_{j,l} ]_q \bigr]_q\\
 \quad\qquad\qquad\qquad\ {}- [a_{i,l}, [a_{k,m},a_{j,i} ]_q -a_{j,m}a_{k,i} ]_q+ \bigl[{a_{i+1,l}}^{\d_i},a_{j,m}a_{k,i+1} - [a_{k,m},a_{j,i+1} ]_q \bigr]_q \bigr)\\
\quad\qquad\qquad\qquad\ {}-a_{i+1,i+1}a_{j,k-1} \bigl( \bigl[{a_{i+1,m}}^{\d_i},a_{i,l} \bigr]_q -{a_{i+1,l}}^{\d_i}a_{i,m} - [a_{i+2,m},a_{i+1,l} ]_q \bigr)\\
\quad\qquad\qquad\qquad\ {}+a_{i+1,m}a_{i+2,l} +a_{i,i}a_{j,k-1} \bigl(-a_{i+1,l}a_{i,m} +a_{i+2,l}{a_{i+1,m}}^{\d_i} + [a_{i,l},a_{i+1,m} ]_q \\
\quad\qquad\qquad\qquad\ {}- \bigl[{a_{i+1,l}}^{\d_i},a_{i+2,m} \bigr]_q \bigr)\\
\quad\qquad \xlongequal{\eqref{rela-3}} a_{i+1,i+1} \bigl( \bigl[{a_{i+1,m}}^{\d_i}, [a_{k,l},a_{j,i-1} ]_q -a_{i,l}a_{j,k-1}-a_{k,i-1}a_{j,l} \bigr]_q \\
\quad\qquad\qquad \quad {}-{a_{i+1,l}}^{\d_i} \bigl( [a_{k,m},a_{j,i-1} ]_q-a_{i,m}a_{j,k-1} -a_{k,i-1}a_{j,m} \bigr) \bigr)\\
 \quad\qquad\qquad \quad {}-a_{i+1,i+1} ( [a_{i+2,m}, [a_{k,l},a_{j,i} ]_q-a_{i+1,l}a_{j,k-1} -a_{k,i}a_{j,l} ]_q \\
 \quad\qquad\qquad\quad {}-a_{i+2,l} ( [a_{k,m},a_{j,i} ]_q-a_{i+1,m}a_{j,k-1} -a_{k,i}a_{j,m} ) )\\
 \quad\qquad\qquad\quad {}-a_{i,i} ( [a_{i,m}, [a_{k,l},a_{j,i} ]_q -a_{i+1,l}a_{j,k-1}-a_{k,i}a_{j,l} ]_q \\
\quad\qquad\qquad \quad {}- [a_{i,l}, [a_{k,m},a_{j,i} ]_q-a_{i+1,m}a_{j,k-1} -a_{k,i}a_{j,m} ]_q )\\
\quad\qquad\qquad\quad {}+a_{i,i} \bigl( \bigl[{a_{i+1,l}}^{\d_i}, [a_{k,m},a_{j,i+1} ]_q -a_{i+2,m}a_{j,k-1}-a_{k,i+1}a_{j,m} \bigr]_q \\
\quad\qquad\qquad \quad {}- \bigl[{a_{i+1,m}}^{\d_i}, [a_{k,l},a_{j,i+1} ]_q -a_{i+2,l}a_{j,k-1}-a_{k,i+1}a_{j,l} \bigr]_q \bigr)\\
\quad\qquad \xlongequal[\eqref{rela-3}]{\eqref{eqn:2a}} a_{i+1,i+1} \bigl({a_{i+1,l}}^{\d_i} (\lr{a_{k,m},a_{j,i-1}}_q-a_{k,i-1}a_{j,m}) \\
 \quad\qquad\qquad \quad {}+\bigl[{a_{i+1,m}}^{\d_i}, a_{k,i-1}a_{j,l}-\lr{a_{k,l},a_{j,i-1}}_q\bigr]_q \\
 \quad\qquad\qquad \quad {}-a_{i+2,l}(\lr{a_{k,m}, a_{j,i}}_q -a_{i+2,l}a_{k,i}a_{j,m})-[a_{i+2,m}, \lr{a_{k,l},a_{j,i}}_q-a_{k,i}a_{j,l}]_q \bigr)\\
 \quad\qquad\qquad \quad {}+a_{i,i} \bigl(\bigl[{a_{i+1,m}}^{\d_i},a_{k,i+1}a_{j,l}-\lr{a_{k,l},a_{j,i+1}}_q\bigr]_q \\
 \quad\qquad\qquad \quad {}+\bigl[{a_{i+1,l}}^{\d_i},\lr{a_{k,m},a_{j,i+1}}_q-a_{k,i+1}a_{j,m}\bigr]_q \\
 \quad\qquad\qquad \quad {}-\bigl[a_{i,m},a_{k,i}a_{j,l} -\lr{a_{k,l},a_{j,i}}_q\bigr]_q -\bigl[a_{i,l},\lr{a_{k,m},a_{j,i}}_q-a_{k,i}a_{j,m}\bigr]_q \bigr)\\
\quad\qquad \xlongequal[\eqref{eqn:j4}]{\eqref{eqn:j3}} 0,
\end{gather*}
Hence,
\begin{gather*}
{\det}_q \bigl(\d_i \bigl(\A_{j,k,i+1}^{i,l,m} \bigr) \bigr) ={\det}^q \bigl(\d_i \bigl(\A_{j,k,i+1}^{i,l,m} \bigr) \bigr)=0.
\end{gather*}
Similarly, we can get that
\begin{gather*}
{\det}_q \bigl(\d_i \bigl(\A_{j,k,l+1}^{l,i,m} \bigr) \bigr) ={\det}^q \bigl(\d_i \bigl(\A_{j,k,l+1}^{l,i,m} \bigr) \bigr)=0;\\
{\det}_q \bigl(\d_i \bigl(\A_{j,k,l+1}^{l, m, i} \bigr) \bigr) ={\det}^q \bigl(\d_i \bigl(\A_{j,k,l+1}^{l, m, i} \bigr) \bigr)=0;\\
{\det}_q \bigl(\d_i \bigl(\A_{i+1,j,k+1}^{k,l,m} \bigr) \bigr) ={\det}^q \bigl(\d_i \bigl(\A_{i+1,j,k+1}^{k,l,m} \bigr) \bigr)=0;\\
{\det}_q \bigl(\d_i \bigl(\A_{j,i+1,k+1}^{k,l,m} \bigr) \bigr) ={\det}^q \bigl(\d_i \bigl(\A_{j,i+1,k+1}^{k,l,m} \bigr) \bigr)=0.
\end{gather*}
Hence, the maps $\d_i$ is an algebra homomorphism of $\A(n)$.
Similarly, $\d_i'$ is an algebra homomorphism.

In fact, $\d_i$ (resp.\ $\d_i'$) are automorphisms of $\A(n)$.
Indeed,
\begin{gather*}
\d_i\d_i'\lrb{a_{i,i}} = \d_i\lrb{a_{i+1,i+1}}=a_{i,i},\qquad
\d_i\d_i'\lrb{a_{i+1,i+1}} = \d_i\lrb{a_{i,i}}=a_{i+1,i+1},\\
\d_i\d_i'\lrb{a_{j,i}} = \d_i\bigl({a_{j,i}}^{\d_i'}\bigr)
 = -\bigl[{a_{j,i}}^{\d_i}, a_{i,i+1}\bigr]_q+a_{j,i-1}a_{i,i}+a_{i+1,i+1}a_{j,i+1}\\
\phantom{\d_i\d_i'\lrb{a_{j,i}} = }{} = [\lr{a_{i,i+1}, a_{j,i}}_q, a_{i,i+1}]_q-f\bigl(\A_{j,i,i+1}^{i-1,i,i+1}\bigr)+a_{j,i}
 = a_{j,i},\\
\d_i\d_i'\lrb{a_{i+1,l}} = \d_i\bigl({a_{i+1,l}}^{\d_i'}\bigr)
 = -\bigl[{a_{i+1,l}}^{\d_i} , a_{i,i+1}\bigr]_q+a_{i+1,i+1}a_{i+2,l}+a_{i,i}a_{i,l}\\
\phantom{\d_i\d_i'\lrb{a_{i+1,l}} =}{} = [\lr{a_{i,i+1}, a_{i+1,l}}_q, a_{i,i+1}]_q -g\bigl(\A_{i,i+1,i+2}^{i,i+1,l}\bigr)+a_{i+1,l}
 = a_{i+1,l},\\
\d_i\d_i'\lrb{a_{j,l}} = \d_i\lrb{a_{j,l}}=a_{j,l},
\end{gather*}
where $j \neq i+1$ and $l \neq i$.
Hence $\d_i\d_i'=\id$ and similarly, we have $\d_i'\d_i=\id$.

This proof is finished.
\end{proof}

\begin{remark}
The automorphisms $\d_i$ of $\A(n)$ coincide with those given in
{\rm\cite{CFPR(2023)}}. Indeed, one can easily see that
\begin{equation*}
\xymatrix{
 \A(n) \ar[d]^{\phi} \ar[r]^{\d_i}
 & \A(n) \ar[d]^{\phi} \\
 \aw(n) \ar[r]_{r_i}
 & \aw(n).
}
\end{equation*}
\end{remark}

\section{The braid relations}\label{braid}
In this section, we prove that the automorphisms established in Section
\ref{sect:automorphism} satisfy the braid group relations.

A group $B_n$ is called a braid group if $B_n$ is generated by
$\sigma_1, \dots, \sigma_{n}$ with the following relations:
\begin{gather}
\sigma_i\sigma_{i+1}\sigma_i=\sigma_{i+1 }\sigma_i\sigma_{i+1}, \qquad
i \in \lbb{1, n-1}, \label{eqn4-1}\\
\sigma_i\sigma_j=\sigma_j\sigma_i,\qquad |i-j| \geq 2, \qquad i, j \in \lbb{1,n}.
 \label{eqn4-2}
\end{gather}
Accordingly, we have
\begin{Theorem} The algebra automorphisms $\d_i$ where $i \in \lbb{0,n-1}$
of $\A(n)$, satisfy the braid relations
\begin{gather*}
\d_i\d_{i+1}\d_i=\d_{i+1 }\d_i\d_{i+1},\qquad i \in \lbb{0,n-2};\\
\d_i\d_j=\d_j\d_i,\qquad |i-j| \geq 2, \qquad i,j \in \lbb{0,n-1}.
\end{gather*}
\end{Theorem}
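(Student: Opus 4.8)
The plan is to reduce both families of relations to checks on the generators. By the two preceding propositions each $\d_i$, $i\in\lbb{0,n-1}$, is an automorphism of $\A(n)$, so the composites $\d_i\d_{i+1}\d_i$, $\d_{i+1}\d_i\d_{i+1}$, $\d_i\d_j$ and $\d_j\d_i$ are endomorphisms of $\A(n)$, and an endomorphism of $\A(n)$ is determined by its values on the generating matrix $\A=(a_{k,l})$. Hence it suffices to prove, for every generator $a_{k,l}$, that $\d_i\d_{i+1}\d_i(a_{k,l})=\d_{i+1}\d_i\d_{i+1}(a_{k,l})$ and, when $|i-j|\ge 2$, that $\d_i\d_j(a_{k,l})=\d_j\d_i(a_{k,l})$. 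The key preliminary observation is the \emph{support} of each map, read off from the matrices $\d_i(\A)$: for $i\ge 1$ the map $\d_i$ fixes $a_{k,l}$ unless $l=i$ or $k=i+1$, while $\d_0$ fixes $a_{k,l}$ unless $k=1$; moreover in the nontrivial cases formulas \eqref{eqn:r0}--\eqref{eqn:ri+1} express $\d_i(a_{k,l})$ through generators of the same column (for a row-$(i+1)$ entry) or the same row (for a column-$i$ entry), with only a unit shift of the remaining index, plus diagonal and endpoint generators. Together with Corollary~\ref{cor:comm} and relation~(R1) this locality eliminates the vast majority of generators at once.

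For $\d_i\d_j=\d_j\d_i$ with $i<j$ and $j\ge i+2$: the support analysis shows that on every generator other than the unique entry lying simultaneously in a $\d_i$-slot and a $\d_j$-slot --- this is $a_{i+1,j}$ when $i\ge 1$, and $a_{1,j}$ when $i=0$ --- the two composites already agree (each side returns the generator itself, or its image under a single application of one map, and all generators occurring in that image lie outside the support of the other map). So the relation reduces to the single identity $\d_i\d_j(a_{i+1,j})=\d_j\d_i(a_{i+1,j})$ (respectively $\d_0\d_j(a_{1,j})=\d_j\d_0(a_{1,j})$), which is verified by expanding both sides via \eqref{eqn:ri}--\eqref{eqn:ri+1} (respectively \eqref{eqn:r0}) and applying~(R1) together with the $q$-Jacobi identities \eqref{eq:jacobi}--\eqref{equ:comm3}.

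For the braid relation $\d_i\d_{i+1}\d_i=\d_{i+1}\d_i\d_{i+1}$, $i\in\lbb{0,n-2}$, the same locality shows that for $i\ge 1$ both composites fix every $a_{k,l}$ with $l\notin\{i,i+1\}$ and $k\notin\{i+1,i+2\}$; thus only the entries of columns $i,i+1$ and rows $i+1,i+2$ must be treated, and these fall into a handful of types (with one free index): $a_{k,i}$ and $a_{k,i+1}$ with $k<i$; the boundary entries $a_{i,i+1}$ and $a_{i+1,i+2}$; $a_{i+1,l}$ and $a_{i+2,l}$ with $l>i+2$; and the diagonal triple $a_{i,i},a_{i+1,i+1},a_{i+2,i+2}$ (for $i=0$ one works instead with the entries of rows $1$ and $2$). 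On the diagonal triple the identity is immediate, since $\d_i$ and $\d_{i+1}$ act there as adjacent transpositions generating a copy of $S_3$, in which the braid relation holds. For each remaining type I would expand $\d_i\d_{i+1}\d_i(a_{k,l})$ and $\d_{i+1}\d_i\d_{i+1}(a_{k,l})$ one map at a time using \eqref{eqn:r0}--\eqref{eqn:ri+1}, then force the cancellation using (R1)--(R3), the $q$-Jacobi identities, Lemmas~\ref{lem:eqn-1} and~\ref{lem:eqn-2}, and the two auxiliary lemmas stated just above, namely the $q$-commutator rewritings of $\d_i(a_{k,l})$ and the identities \eqref{eqn:01}--\eqref{eqn:i2} and \eqref{eqn:j1}--\eqref{eqn:j4}, which are assembled precisely for this. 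Since the two composite automorphisms then agree on the generating set $\{a_{k,l}\}$, they coincide, establishing both families of braid relations; as a consistency check this agrees, via $\phi$ and the Remark above, with the braid relations for the $r_i$ of \cite{CFPR(2023)}, though it is obtained independently here.

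The step I expect to be the main obstacle is this last verification for the ``row'' types $a_{i+1,l},a_{i+2,l}$ and the mixed type $a_{k,i+1}$: expanding the threefold composite produces long iterated $q$-commutators whose vanishing is not visible term by term and requires invoking several of the (R3) identities simultaneously with \eqref{eqn:j1}--\eqref{eqn:j4} (and, on the column side, \eqref{eqn:01}--\eqref{eqn:i2}). The real effort is the bookkeeping --- reshaping each term produced by one application of $\d$ into the precise normal form demanded before the next $\d$ is applied, and spotting which auxiliary identity closes each case --- and the $i=0$ instance, although treated separately, runs along identical lines with \eqref{eqn:ri}--\eqref{eqn:ri+1} replaced by \eqref{eqn:r0}.
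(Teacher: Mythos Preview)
Your outline is correct and matches the paper's approach: reduce to generators, use the support analysis to discard the trivially fixed entries, handle the diagonal triple via the $S_3$ action, and then verify the finitely many remaining types by expanding with \eqref{eqn:r0}--\eqref{eqn:ri+1}. The paper in fact carries out every one of those expansions explicitly, and the tools it actually needs for the braid relation are lighter than you suggest: beyond (R1)--(R3) and \eqref{equ:comm2}, only a couple of the Lemma~\ref{lem:eqn-1} identities (specifically \eqref{eqn:1b} and \eqref{eqn:3b}) enter, whereas the blocks \eqref{eqn:01}--\eqref{eqn:i2} and \eqref{eqn:j1}--\eqref{eqn:j4} you cite were assembled for the automorphism propositions and are not used here. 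Your handling of the commuting case is if anything more careful than the paper's, since you isolate the genuinely mixed generator $a_{i+1,j}$ (resp.\ $a_{1,j}$) as the only nontrivial check, and that check indeed closes immediately from \eqref{equ:comm2} once one notes $[a_{i,i+1},a_{j,j+1}]=0$.
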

\begin{proof}
The proof includes more tedious calculations, but to understand them
in a better way, we write them down in detail.

Firstly, we prove that $\d_i\d_{i+1}\d_i=\d_{i+1 }\d_i\d_{i+1}$, $i \in \lbb{0,n-2}$.

(a) When $i=0$.
If $k \in \lbb{3,n}$ and $j\in\lbb{2,n}$, obviously we have
$\d_0\d_1\d_0\lrb{a_{k,j}}=a_{k,j}=\d_1\d_0\d_1\lrb{a_{k,j}}$.
If $k=1$ or $k=2$ or $j=3$, then we have
\begin{gather*}
 \d_0\d_1\d_0\lrb{a_{1,1}} =a_{1,1}=\d_1\d_0\d_1\lrb{a_{1,1}},\qquad
 \d_0\d_1\d_0\lrb{a_{1,n}}=a_{2,2}=\d_1\d_0\d_1\lrb{a_{1,n}},\\
 \d_0\d_1\d_0\lrb{a_{2,2}}=a_{1,n}=\d_1\d_0\d_1\lrb{a_{2,2}},\\
 \d_0\d_1\d_0\lrb{a_{1,j}}
 \xlongequal[\eqref{eqn:ri}]{\eqref{eqn:r0}} [a_{1,2},\lr{a_{2,n}, a_{1,j}}_q]_q+a_{2,2}a_{j+1,n}-a_{1,n}\lr{a_{1,2}, a_{2,j}}_q +a_{1,1}a_{3,j}a_{1,n} \\
\phantom{ \d_0\d_1\d_0\lrb{a_{1,j}}\xlongequal{\qquad}}{} -a_{1,1}\bigl(a_{j+1,n}a_{1,2}+\bigl[{a_{1,2}}^{\d_0}, a_{2,j}\bigr]_q -a_{2,2}{a_{1,j}}^{\d_0}\bigr) \\
 \phantom{ \d_0\d_1\d_0\lrb{a_{1,j}}}{} \xlongequal{\lrb{\ref{eqn:3b}}} [a_{1,2},\lr{a_{2,n}, a_{1,j}}_q]_q+a_{2,2}a_{j+1,n}-a_{1,n}\lr{a_{1,2}, a_{2,j}}_q \\
 \phantom{ \d_0\d_1\d_0\lrb{a_{1,j}}\xlongequal{\qquad}}{} +a_{1,1}a_{3,j}a_{1,n}-a_{1,1}\lr{a_{3,n}, a_{1,j}}_q\\
 \phantom{ \d_0\d_1\d_0\lrb{a_{1,j}}}{} \xlongequal{\eqref{equ:comm2}} [\lr{a_{1,2}, a_{2,n}}_q, a_{1,j}]_q-a_{1,1}\lr{a_{3,n}, a_{1,j}}_q+a_{2,2}a_{j+1,n}\\
 \phantom{ \d_0\d_1\d_0\lrb{a_{1,j}}\xlongequal{\qquad}}{} -a_{1,n}(\lr{a_{1,2}, a_{2,j}}_q-a_{1,1}a_{3,j}) \\
 \phantom{ \d_0\d_1\d_0\lrb{a_{1,j}}}{} \xlongequal{\eqref{eqn:ri}} [\lr{a_{1,2}, a_{2,n}}_q -a_{1,1} a_{3,n}, a_{1,j}]_q +a_{2,2}a_{j+1,n}+a_{1,n}\bigl({a_{2,j}}^{\d_1}-a_{2,2} a_{1,j}\bigr) \\
 \phantom{ \d_0\d_1\d_0\lrb{a_{1,j}}}{} \xlongequal{\eqref{eqn:ri}} -\bigl[{a_{2,n}}^{\d_1}, a_{1,j}\bigr]_q+a_{2,2}a_{j+1,n}+{a_{2,j}}^{\d_1}a_{1,n}\\
 \phantom{ \d_0\d_1\d_0\lrb{a_{1,j}}}{} \xlongequal[\eqref{eqn:ri}]{\lrb{\ref{eqn:r0}}} \d_1\d_0\d_1\lrb{a_{1,j}},\\
 \d_0\d_1\d_0\lrb{a_{2,j}} \xlongequal[\eqref{eqn:ri}]{\lrb{\ref{eqn:r0}}} [\lr{a_{2,n}, a_{1,2}}_q, a_{2,j}]_q -a_{1,1}\lr{a_{3,n}, a_{2,j}}_q +a_{3,j} a_{1,n} \\
 \phantom{ \d_0\d_1\d_0\lrb{a_{2,j}} \xlongequal{\qquad}}{} -a_{2,2}\lr{a_{2,n}, a_{1,j}}_q+a_{1,1}a_{2,2}a_{j+1,n}\\
 \phantom{ \d_0\d_1\d_0\lrb{a_{2,j}}}{} \xlongequal[\eqref{eqn:ri}]{\eqref{equ:comm2}} [a_{2,n},\lr{a_{1,2}, a_{2,j}}_q]_q -a_{1,1}\bigl(a_{3,j}a_{2,n} - {a_{2,j}}^{\d_1}a_{1,n}+\bigl[ {a_{2,n}}^{\d_1}, a_{1,j}\bigr]_q\bigr) \\
 \phantom{ \d_0\d_1\d_0\lrb{a_{2,j}}\xlongequal{\qquad}}{} +a_{3,j}a_{1,n}-a_{2,2}\lr{a_{2,n}, a_{1,j}}_q +a_{1,1}a_{2,2}a_{j+1,n}\\
 \phantom{ \d_0\d_1\d_0\lrb{a_{2,j}}}{} \xlongequal{\quad} [a_{2,n},\lr{a_{1,2}, a_{2,j}}_q -a_{1,1}a_{3,j} -a_{2,2}a_{1,j}]_q+a_{1,n}a_{3,j} \\
 \phantom{ \d_0\d_1\d_0\lrb{a_{2,j}} \xlongequal{\quad} }{} +a_{1,1} \bigl({a_{2,j}}^{\d_1}a_{1,n} - \bigl[ {a_{2,n}}^{\d_1}, a_{1,j}\bigr]_q\bigr)+a_{1,1}a_{2,2}a_{j+1,n}\\
 \phantom{ \d_0\d_1\d_0\lrb{a_{2,j}}}{} \xlongequal[\eqref{eqn:ri}]{\eqref{eqn:1b}} -\bigl[a_{2,n} , {a_{2,j}}^{\d_1}\bigr]_q+a_{1,n}a_{3,j} +a_{1,1} \bigl({a_{2,j}}^{\d_1}a_{1,n}-\bigl[ {a_{2,n}}^{\d_1}, a_{1,j}\bigr]_q\bigr)\\
 \phantom{ \d_0\d_1\d_0\lrb{a_{2,j}} \xlongequal{\qquad} }{} +a_{1,1}a_{2,2}a_{j+1,n}\\
 \phantom{ \d_0\d_1\d_0\lrb{a_{2,j}}}{} \xlongequal[\eqref{eqn:ri}]{\lrb{\ref{eqn:r0}}} \d_1\d_0\d_1\lrb{a_{2,j}}.
\end{gather*}

(b) When $i \in \lbb{1,n-2}$.
If $k \neq i+1, i+2$ and $j\neq i, i+1$, obviously we have
$\d_i\d_{i+1}\d_i\lrb{a_{k,j}}=a_{k,j}=\d_{i+1}\d_i\d_{i+1}\lrb{a_{k,j}}$.
If $k=i+1$ or $k=i+2$ or $j=i$ or $j=i+1$, we have
\begin{gather*}
 \d_i\d_{i+1}\d_i\lrb{a_{i,i}}
=a_{i+2,i+2}
=\d_{i+1}\d_i\d_{i+1}\lrb{a_{i,i}},\\
 \d_i\d_{i+1}\d_i\lrb{a_{i+1,i+1}}
=a_{i+1,i+1}
=\d_{i+1}\d_i\d_{i+1}\lrb{a_{i+1,i+1}},\\
 \d_i\d_{i+1}\d_i\lrb{a_{i+2,i+2}}
=a_{i,i}
=\d_{i+1}\d_i\d_{i+1}\lrb{a_{i+2,i+2}},\\
 \d_i\d_{i+1}\d_i\lrb{a_{k,i}}
 \xlongequal[\eqref{eqn:ri+1}]{\lrb{\ref{eqn:ri}}} -\bigl[a_{i+1,i+2}, {a_{k,i}}^{\d_{i}}\bigr]_q+a_{i+1,i+1}a_{i+2,i+2} {a_{k,i}}^{\d_{i}}+a_{k,i-1}a_{i+2,i+2}\\
\phantom{ \d_i\d_{i+1}\d_i\lrb{a_{k,i}}\xlongequal{\qquad}\!\! }{} -a_{i+1,i+1}\bigl[ {a_{i+1,i+2}}^{\d_{i}}, a_{1,i+1}\bigr]_q+a_{i,i}a_{i+1,i+1}a_{1,i+2}\\
 \phantom{ \d_i\d_{i+1}\d_i\lrb{a_{k,i}} }{} \xlongequal{\lrb{\ref{eqn:ri}}} [a_{i+1,i+2},\lr{a_{i,i+1}, a_{k,i}}_q]_q -a_{i+1,i+2}a_{k,i-1}a_{i+1,i+1}\\
 \phantom{ \d_i\d_{i+1}\d_i\lrb{a_{k,i}}\xlongequal{\qquad}\!\! }{} -a_{i,i}\lr{a_{i+1,i+2}, a_{k,i+1}}_q +a_{i+1,i+1}a_{i+2,i+2} {a_{k,i}}^{\d_{i}} +a_{k,i-1}a_{i+2,i+2}\\
 \phantom{ \d_i\d_{i+1}\d_i\lrb{a_{k,i}}\xlongequal{\qquad}\!\! }{} -a_{i+1,i+1}\bigl[{a_{i+1,i+2}}^{\d_{i}}, a_{k,i+1}\bigr]_q +a_{i,i}a_{i+1,i+1}a_{k,i+2}\\
 \phantom{ \d_i\d_{i+1}\d_i\lrb{a_{k,i}} }{} \xlongequal{\lrb{\ref{equ:comm2}}} [\lr{a_{i+1,i+2}, a_{i,i+1}}_q -a_{i+1,i+1}a_{i,i+2}, a_{k,i}]_q+a_{k,i-1}a_{i+2,i+2}\\
 \phantom{ \d_i\d_{i+1}\d_i\lrb{a_{k,i}}\xlongequal{\qquad}\!\! }{} -a_{i,i}(\lr{a_{i+1,i+2}, a_{k,i+1}}_q-a_{i+1,i+1}a_{k,i+2})\\
 \phantom{ \d_i\d_{i+1}\d_i\lrb{a_{k,i}} }{} \xlongequal{\lrb{\ref{eqn:ri+1}}} -\bigl[{a_{i,i+1}}^{\d_{i+1}}, a_{k,i}\bigr]_q+a_{k,i-1}a_{i+2,i+2} +a_{i,i}{a_{k,i+1}}^{\d_{i+1}}\\
 \phantom{ \d_i\d_{i+1}\d_i\lrb{a_{k,i}} }{} \xlongequal[\eqref{eqn:ri+1}]{\lrb{\ref{eqn:ri}}} \d_{i+1}\d_i\d_{i+1}\lrb{a_{k,i}}, \\ \d_i\d_{i+1}\d_i\lrb{a_{k,i+1}}
 \xlongequal[\eqref{eqn:ri+1}]{\lrb{\ref{eqn:ri}}} [\lr{a_{i,i+1}, a_{i+1,i+2}}_q, a_{k,i+1}]_q -a_{i+1,i+1}\lr{a_{i,i+2}, a_{k,i+1}}_q \\
 \phantom{ \d_i\d_{i+1}\d_i\lrb{a_{k,i+1}}\xlongequal{\qquad} }{} -a_{i+2,i+2}\lr{a_{i,i+1}, a_{k,i}}_q +a_{k,i-1}a_{i+1,i+1}a_{i+2,i+2} +a_{i,i}a_{k,i+2}\\
 \phantom{ \d_i\d_{i+1}\d_i\lrb{a_{k,i+1}} }{} \xlongequal{\lrb{\ref{rela-3}}} [a_{i,i+1},\lr{a_{i+1,i+2}, a_{k,i+1}}_q]_q -a_{i+1,i+1} \bigl(\bigl[{a_{i,i+1}}^{\d_{i+1}}, a_{k,i}\bigr]_q \\
 \phantom{ \d_i\d_{i+1}\d_i\lrb{a_{k,i+1}}\xlongequal{\qquad} }{} -a_{i,i} {a_{k,i+1}}^{\d_{i+1}} +a_{i,i+1}a_{k,i+2} \bigr) -a_{i+2,i+2}(\lr{a_{i,i+1}, a_{k,i}}_q \\
 \phantom{ \d_i\d_{i+1}\d_i\lrb{a_{k,i+1}}\xlongequal{\qquad} }{}+a_{k,i-1}a_{i+1,i+1}a_{i+2,i+2}) +a_{i,i}a_{k,i+2}\\
 \phantom{ \d_i\d_{i+1}\d_i\lrb{a_{k,i+1}}}{} \xlongequal[\eqref{eqn:ri+1}]{\lrb{\ref{rela-2}}} \bigl[\bigl[{a_{i,i+1}}^{\d_{i+1}}, a_{i+1,i+2}\bigr]_q-a_{i,i}a_{i+1,i+1} -a_{i+2,i+2}a_{i,i+2}, {a_{k,i+1}}^{\d_{i+1}}\bigr]_q \\
 \phantom{ \d_i\d_{i+1}\d_i\lrb{a_{k,i+1}}\xlongequal{\qquad} }{} -a_{i+1,i+1}\bigl[{a_{i,i+1}}^{\d_{i+1}},a_{k,i}\bigr]_q+a_{k,i-1}a_{i+1,i+1}a_{i+2,i+2} \\
 \phantom{ \d_i\d_{i+1}\d_i\lrb{a_{k,i+1}}\xlongequal{\qquad} }{} +a_{i,i}{a_{k,i+1}}^{\d_{i+1}}a_{i+1,i+1}+a_{i,i}{a_{k,i+2}}^{\d_{i+1}}\\
 \phantom{ \d_i\d_{i+1}\d_i\lrb{a_{k,i+1}}}{} \xlongequal{\quad} \bigl[\bigl[{a_{i,i+1}}^{\d_{i+1}}, a_{i+1,i+2}\bigr]_q, {a_{k,i+1}}^{\d_{i+1}}\bigr]_q -a_{i,i}a_{i+1,i+1}{a_{k,i+1}}^{\d_{i+1}} \\
 \phantom{ \d_i\d_{i+1}\d_i\lrb{a_{k,i+1}}\xlongequal{\quad} }{} -a_{i+2,i+2} \bigl[a_{i,i+2}, {a_{k,i+1}}^{\d_{i+1}} \bigr]_q -a_{i+1,i+1} \bigl[{a_{i,i+1}}^{\d_{i+1}}, a_{k,i} \bigr]_q \\
 \phantom{ \d_i\d_{i+1}\d_i\lrb{a_{k,i+1}}\xlongequal{\quad} }{} +a_{k,i-1}a_{i+1,i+1}a_{i+2,i+2} +a_{i,i}{a_{k,i+1}}^{\d_{i+1}} a_{i+1,i+1}+a_{i,i}{a_{k,i+2}}^{\d_{i+1}}\\
 \phantom{ \d_i\d_{i+1}\d_i\lrb{a_{k,i+1}} }{} \xlongequal{\lrb{\ref{eqn:ri}}} \d_{i+1}\bigl(-\bigl[{a_{i+1,i+2}}^{\d_{i}}, a_{k,i+1}\bigr]_q+{a_{k,i}}^{\d_{i}}a_{i+2,i+2} +a_{i,i}a_{k,i+2}\bigr)\\
 \phantom{ \d_i\d_{i+1}\d_i\lrb{a_{k,i+1}} }{} \xlongequal[\eqref{eqn:ri+1}]{\lrb{\ref{eqn:ri}}} \d_{i+1}\d_i\d_{i+1}\lrb{a_{k,i+1}},
 \\
 \d_i\d_{i+1}\d_i\lrb{a_{i+1,j}}
\xlongequal[\eqref{eqn:ri+1}]{\lrb{\ref{eqn:ri}}} \bigl[\bigl[ {a_{i+1,i+2}}^{\d_{i}}, a_{i,i+1}\bigr]_q,{a_{i+2,j}}^{\d_{i+1}}\bigr]_q -a_{i,i}\bigl[a_{i,i+2}, {a_{i+2,j}}^{\d_{i+1}}\bigr]_q \\
 \phantom{ \d_i\d_{i+1}\d_i\lrb{a_{i+1,j}}\xlongequal{\qquad}\!\!}{} -a_{i+1,i+1}[ {a_{i+1,i+2}}^{\d_{i}}, a_{i+2,j}]_q +a_{i,i}a_{i+1,i+1}a_{i+3,j}+a_{i+2,i+2}a_{i,j}\\
 \phantom{ \d_i\d_{i+1}\d_i\lrb{a_{i+1,j}}}{} \xlongequal{\quad}
\bigl[\bigl[{a_{i+1,i+2}}^{\d_{i}}, a_{i,i+1}\bigr]_q -a_{i,i}a_{i,i+2}, {a_{i+2,j}}^{\d_{i+1}}\bigr]_q \\
 \phantom{ \d_i\d_{i+1}\d_i\lrb{a_{i+1,j}}\xlongequal{\quad}}{} -a_{i+1,i+1}\bigl[{a_{i+1,i+2}}^{\d_{i}}, a_{i+2,j}\bigr]_q +a_{i,i}a_{i+1,i+1}a_{i+3,j}+a_{i+2,i+2}a_{i,j}\\
 \phantom{ \d_i\d_{i+1}\d_i\lrb{a_{i+1,j}}}{}\xlongequal{\lrb{\ref{rela-2}}} -\bigl[a_{i+1,i+2}, {a_{i+2,j}}^{\d_{i+1}}\bigr]_q +a_{i+1,i+1}a_{i+2,i+2}{a_{i+2,j}}^{\d_{i+1}} \\
 \phantom{ \d_i\d_{i+1}\d_i\lrb{a_{i+1,j}}\xlongequal{\qquad}}{} -a_{i+1,i+1}\bigl[{a_{i+1,i+2}}^{\d_{i}}, a_{i+2,j}\bigr]_q +a_{i,i}a_{i+1,i+1}a_{i+3,j}+a_{i+2,i+2}a_{i,j}\\
 \phantom{ \d_i\d_{i+1}\d_i\lrb{a_{i+1,j}}}{} \xlongequal{\lrb{\ref{eqn:ri+1}}} [a_{i+1,i+2},\lr{a_{i,i+1}, a_{i+1,j}}_q]_q -a_{i,i}\lr{a_{i+1,i+2}, a_{i+2,j}}_q \\
 \phantom{ \d_i\d_{i+1}\d_i\lrb{a_{i+1,j}}\xlongequal{\qquad}\!\!}{} +a_{i+1,i+1} \bigl(a_{i+2,i+2}{a_{i+2,j}}^{\d_{i+1}}-a_{i+1,i+2}a_{i,j}-\bigl[{a_{i+1,i+2}}^{\d_{i}}, a_{i+2,j}\bigr]_q \bigr) \\
 \phantom{ \d_i\d_{i+1}\d_i\lrb{a_{i+1,j}}\xlongequal{\qquad}\!\!}{} +a_{i,i}a_{i+1,i+1}a_{i+3,j}+a_{i+2,i+2}a_{i,j}\\
 \phantom{ \d_i\d_{i+1}\d_i\lrb{a_{i+1,j}}}{} \xlongequal{\lrb{\ref{rela-3}}} [\lr{a_{i+1,i+2}, a_{i,i+1}}_q, a_{i+1,j}]_q -a_{i,i}\lr{a_{i+1,i+2}, a_{i+2,j}}_q \\
 \phantom{ \d_i\d_{i+1}\d_i\lrb{a_{i+1,j}}\xlongequal{\qquad}}{} - a_{i+1,i+1}\lr{a_{i,i+2}, a_{i+1,j}}_q +a_{i,i}a_{i+1,i+1}a_{i+3,j}+a_{i+2,i+2}a_{i,j}\\
 \phantom{ \d_i\d_{i+1}\d_i\lrb{a_{i+1,j}}}{} \xlongequal[\eqref{eqn:ri+1}]{\lrb{\ref{eqn:ri}}} \d_{i+1}\d_i\d_{i+1}\lrb{a_{i+1,j}},\\
 \d_i\d_{i+1}\d_i\lrb{a_{i+2,j}}
\xlongequal[\eqref{eqn:ri+1}]{\lrb{\ref{eqn:ri}}} -[{a_{i+1,i+2}}^{\d_{i}}, a_{i+2,j}]_q+a_{i,i}a_{i+3,j}+a_{i+2,i+2}{a_{i+2,j}}^{\d_{i+1}}\\
 \phantom{ \d_i\d_{i+1}\d_i\lrb{a_{i+2,j}}}{}\xlongequal{\lrb{\ref{rela-3}}} [\lr{a_{i,i+1}, a_{i+1,i+2}}_q, a_{i+2,j}]_q -a_{i+1,i+1}\lr{a_{i,i+2}, a_{i+2,j}}_q +a_{i,i}a_{i+3,j} \\
 \phantom{ \d_i\d_{i+1}\d_i\lrb{a_{i+2,j}}\xlongequal{\qquad}}{} -a_{i+2,i+2}\lr{a_{i,i+1}, a_{i+1,j}}_q+a_{i+1,i+1}a_{i+2,i+2}a_{i,j}\\
 \phantom{ \d_i\d_{i+1}\d_i\lrb{a_{i+2,j}}}{}\xlongequal{\lrb{\ref{rela-3}}} [a_{i,i+1},\lr{a_{i+1,i+2}, a_{i+2,j}}_q]_q -a_{i+1,i+1} \bigl(a_{i,i+1}a_{i+3,j} -a_{i,i}{a_{i+2,j}}^{\d_{i+1}} \\
 \phantom{ \d_i\d_{i+1}\d_i\lrb{a_{i+2,j}}\xlongequal{\qquad}}{} + \bigl[{a_{i,i+1}}^{\d_{i+1}}, a_{i+1,j} \bigr]_q \bigr)-a_{i+2,i+2}\lr{a_{i,i+1}, a_{i+1,j}}_q +a_{i,i}a_{i+3,j} \\
 \phantom{ \d_i\d_{i+1}\d_i\lrb{a_{i+2,j}}\xlongequal{\qquad}}{} +a_{i+1,i+1}a_{i+2,i+2}a_{i,j}\\
 \phantom{ \d_i\d_{i+1}\d_i\lrb{a_{i+2,j}}}{}\xlongequal{\lrb{\ref{eqn:ri+1}}} -\bigl[a_{i,i+1},{a_{i+2,j}}^{\d_{i+1}}\bigr]_q +a_{i,i}a_{i+1,i+1}{a_{i+2,j}}^{\d_{i+1}} +a_{i,i}a_{i+3,j} \\
 \phantom{ \d_i\d_{i+1}\d_i\lrb{a_{i+2,j}}\xlongequal{\qquad}\!\!}{} -a_{i+1,i+1}\bigl[ {a_{i,i+1}}^{\d_{i+1}}, a_{i+1,j}\bigr]_q+a_{i+1,i+1}a_{i+2,i+2}a_{i,j}\\
 \phantom{ \d_i\d_{i+1}\d_i\lrb{a_{i+2,j}}}{} \xlongequal[\eqref{eqn:ri+1}]{\lrb{\ref{rela-2}}} \bigl[\bigl[{a_{i,i+1}}^{\d_{i+1}}, a_{i+1,i+2}\bigr]_q-a_{i+2,i+2}a_{i,i+2}, {a_{i+2,j}}^{\d_{i+1}}\bigr]_q +a_{i,i}a_{i+3,j} \\
 \phantom{ \d_i\d_{i+1}\d_i\lrb{a_{i+2,j}}\xlongequal{\qquad}}{} -a_{i+1,i+1}\bigl[ {a_{i,i+1}}^{\d_{i+1}}, a_{i+1,j}\bigr]_q +a_{i+1,i+1}a_{i+2,i+2}a_{i,j}\\
 \phantom{ \d_i\d_{i+1}\d_i\lrb{a_{i+2,j}}}{} \xlongequal{\lrb{\ref{eqn:ri}}} \d_{i+1}\bigl(-\bigl[{a_{i+1,i+2}}^{\d_{i}}, a_{i+2,j}\bigr]_q+a_{i,i}a_{i+3,j}+a_{i+2,i+2} {a_{i+2,j}}^{\d_{i+1}}\bigr)\\
 \phantom{ \d_i\d_{i+1}\d_i\lrb{a_{i+2,j}}}{}\xlongequal[\eqref{eqn:ri+1}]{\lrb{\ref{eqn:ri}}} \d_{i+1}\d_i\d_{i+1}
\lrb{a_{i+2,j}}.
\end{gather*}

Hence, $\d_i$ satisfy the relation \eqref{eqn4-1}.
Now, let us prove that $\d_i$ satisfy the relation \eqref{eqn4-2}.

(a) If one of $i$ and $j$ is equal to $0$, let us say $i=0$, then $j\ge 2$.
If $ k \neq 1, j+1$, and $ l \neq j$, we have
$ \d_0\d_j\lrb{a_{k,l}}=a_{k,l}=\d_j\d_0\lrb{a_{k,l}}$.
If $ k = 1$ or $k=j+1$ or $l \neq j$, we have
\begin{gather*}
 \d_0\d_j\lrb{a_{1,1}}=\d_0\lrb{a_{1,1}}=a_{1,n}=\d_j\lrb{a_{1,n}}=\d_j\d_0\lrb{a_{1,1}},\\
 \d_0\d_j\lrb{a_{1,n}}=\d_0\lrb{a_{1,n}}=a_{1,1}=\d_j\lrb{a_{1,1}}=\d_j\d_0\lrb{a_{1,n}},\\
 \d_0\d_j\lrb{a_{1,m}}=\d_0\lrb{a_{1,m}}={a_{1,m}}^{\d_{0}}
=\d_j\bigl({a_{1,m}}^{\d_{0}}\bigr)=\d_j\d_0\lrb{a_{1,m}},\\
 \d_0\d_j\lrb{a_{j+1,j+1}}=\d_0\lrb{a_{j,j}}=a_{j,j}=\d_j\lrb{a_{j+1,j+1}}=\d_j\d_0 \lrb{a_{j+1,j+1}},\\
 \d_0\d_j\lrb{a_{j+1,l}}=\d_0\bigl({a_{j+1,l}}^{\d_{j}}\bigr)
={a_{j+1,l}}^{\d_{j}}=\d_j\lrb{a_{j+1,l}}=\d_j\d_0\lrb{a_{j+1,l}},\\
 \d_0\d_j\lrb{a_{j,j}}=\d_0\lrb{a_{j+1,j+1}}=a_{j+1,j+1}=\d_j\lrb{a_{j,j}} =\d_j\d_0\lrb{a_{j,j}},\\
 \d_0\d_j\lrb{a_{k,j}}=\d_0\bigl({a_{k,j}}^{\d_{j}}\bigr)
=\d_j\lrb{a_{k,j}}=\d_j\d_0\lrb{a_{k,j}}.
\end{gather*}

(b) If $0<i <j$, then if $ k \neq i+1, j+1$, and $l \neq i, j$, we have
$\d_i\d_j\lrb{a_{k,l}}=a_{k,l}=\d_j\d_i\lrb{a_{k,l}}$.
If $k=i+1$ or $k=j+1$ or $l=i$ or $l=j$, we have
\begin{gather*}
 \d_i\d_j\lrb{a_{i,i}}=\d_i\lrb{a_{i,i}}=a_{i+1,i+1}=\d_j\lrb{a_{i+1,i+1}} =\d_j\d_i\lrb{a_{i,i}},\\
 \d_i\d_j\lrb{a_{i+1,i+1}}=\d_i\lrb{a_{i+1,i+1}}=a_{i,i}=\d_j\lrb{a_{i,i}} =\d_j\d_i\lrb{a_{i+1,i+1}},\\
 \d_i\d_j\lrb{a_{j,j}}=\d_i\lrb{a_{j+1,j+1}}=a_{j+1,j+1}=\d_j\lrb{a_{j,j}} =\d_j\d_i\lrb{a_{j,j}},\\
 \d_i\d_j\lrb{a_{j+1,j+1}}=\d_i\lrb{a_{j,j}}=a_{j,j}=\d_j\lrb{a_{j+1,j+1}} =\d_j\d_i\lrb{a_{j+1,j+1}},\\
 \d_i\d_j\lrb{a_{k,i}}=\d_i\lrb{a_{k,i}}={a_{k,i}}^{\d_{i}}
=\d_j\bigl({a_{k,i}}^{\d_{i}}\bigr)=\d_j\d_i\lrb{a_{k,i}},\\
 \d_i\d_j\lrb{a_{k,j}}=\d_i\bigl({a_{k,j}}^{\d_{j}} \bigr)
={a_{k,j}}^{\d_{j}}=\d_j\lrb{a_{k,j}} =\d_j\d_i\lrb{a_{k,j}},\\
 \d_i\d_j\lrb{a_{i+1,l}}=\d_i\lrb{a_{i+1,l}}={a_{i+1,l}}^{\d_{i}}
=\d_j\bigl({a_{i+1,l}}^{\d_{i}}\bigr)=\d_j\d_i\lrb{a_{i+1,l}},\\
 \d_i\d_j\lrb{a_{j+1,l}}=\d_i\bigl({a_{j+1,l}}^{\d_{j}}\bigr)={a_{j+1,l}}^{\d_{j}}
=\d_j\lrb{a_{j+1,l}}=\d_j\d_i\lrb{a_{j+1,l}}.
\end{gather*}
Up to now, $\d_i$ also satisfy the relation \eqref{eqn4-2}.

In summary, the proof is finished.
\end{proof}

\section{Conclusion}

In the paper, we offer an equivalent perspective on defining the higher-rank Askey--Wilson algebras $\A(n)$ provided by the authors in \cite{CFPR(2023)}. We also write down a series of automorphisms in our settings, which coincide with those in \cite{CFPR(2023)}. The detailed proofs are done here that satisfy the braid group relations in $\A(n)$.

In our future work, we will explore the PBW basis of $\A(n)$ and investigate the theory of their representations.
It is interesting to find a more intuitive geometric interpretation of $\A(n)$.

\subsection*{Acknowledgements}
The authors are very grateful to the anonymous referees for their careful reviewing the man\-u\-script and many valuable comments, which significantly enhance the quality of the paper. This work is supported by the National Natural Science Foundation of China (Grant No.\ 12471038).

\pdfbookmark[1]{References}{ref}
\LastPageEnding


\begin{thebibliography}{99}
\footnotesize\itemsep=0pt

\bibitem{BK(2005)}
Baseilhac P., Koizumi K., A deformed analogue of {O}nsager's symmetry in
 the~{$XXZ$} open spin chain, \href{https://doi.org/10.1088/1742-5468/2005/10/p10005}{\textit{J.~Stat. Mech. Theory Exp.}}
 \textbf{2005} (2005), P10005, 15~pages, \href{https://arxiv.org/abs/hep-th/0507053}{arXiv:hep-th/0507053}.

\bibitem{CL(2022)}
Cooke J., Lacabanne A., Higher rank {A}skey--{W}ilson algebras as skein
 algebras, \href{https://arxiv.org/abs/2205.04414}{arXiv:2205.04414}.

\bibitem{CFGPRV(2021)}
Cramp\'e N., Frappat L., Gaboriaud J., d'Andecy L.P., Ragoucy E., Vinet L., The
 {A}skey--{W}ilson algebra and its avatars, \href{https://doi.org/10.1088/1751-8121/abd783}{\textit{J.~Phys.~A}} \textbf{54}
 (2021), 063001, 32~pages, \href{https://arxiv.org/abs/2009.14815}{arXiv:2009.14815}.

\bibitem{CFPR(2023)}
Cramp\'e N., Frappat L., Poulain~d'Andecy L., Ragoucy E., The higher-rank
 {A}skey--{W}ilson algebra and its braid group automorphisms, \href{https://doi.org/10.3842/SIGMA.2023.077}{\textit{SIGMA}}
 \textbf{19} (2023), 077, 36~pages, \href{https://arxiv.org/abs/2303.17677}{arXiv:2303.17677}.

\bibitem{CGVZ(2020)}
Cramp\'e N., Gaboriaud J., Vinet L., Zaimi M., Revisiting the {A}skey--{W}ilson
 algebra with the universal {$R$}-matrix of~{$\mathrm{U}_q(\mathfrak{sl}_2)$},
 \href{https://doi.org/10.1088/1751-8121/ab604e}{\textit{J.~Phys.~A}} \textbf{53} (2020), 05LT01, 10~pages,
 \href{https://arxiv.org/abs/1908.04806}{arXiv:1908.04806}.

\bibitem{CVZ(2021)}
Cramp\'e N., Vinet L., Zaimi M., Temperley--{L}ieb,
 {B}irman--{M}urakami--{W}enzl and {A}skey--{W}ilson algebras and other
 centralizers of~{$U_q(\mathfrak{sl}_2)$}, \href{https://doi.org/10.1007/s00023-021-01064-x}{\textit{Ann. Henri Poincar\'e}}
 \textbf{22} (2021), 3499--3528, \href{https://arxiv.org/abs/2008.04905}{arXiv:2008.04905}.

\bibitem{DDV(2020)}
De~Bie H., De~Clercq H., van~de Vijver W., The higher rank {$q$}-deformed
 {B}annai--{I}to and {A}skey--{W}ilson algebra, \href{https://doi.org/10.1007/s00220-019-03562-w}{\textit{Comm. Math. Phys.}}
 \textbf{374} (2020), 277--316, \href{https://arxiv.org/abs/1805.06642}{arXiv:1805.06642}.

\bibitem{DV(2020)}
De~Bie H., van~de Vijver W., A discrete realization of the higher rank {R}acah
 algebra, \href{https://doi.org/10.1007/s00365-019-09475-0}{\textit{Constr. Approx.}} \textbf{52} (2020), 1--29,
 \href{https://arxiv.org/abs/1808.10520}{arXiv:1808.10520}.

\bibitem{De(2019)}
De~Clercq H., Higher rank relations for the {A}skey--{W}ilson and
 {$q$}-{B}annai--{I}to algebra, \href{https://doi.org/10.3842/SIGMA.2019.099}{\textit{SIGMA}} \textbf{15} (2019), 099,
 32~pages, \href{https://arxiv.org/abs/1908.11654}{arXiv:1908.11654}.

\bibitem{GW(2023)}
Groenevelt W., Wagenaar C., An {A}skey--{W}ilson algebra of rank~2,
 \href{https://doi.org/10.3842/SIGMA.2023.008}{\textit{SIGMA}} \textbf{19} (2023), 008, 35~pages, \href{https://arxiv.org/abs/2206.03986}{arXiv:2206.03986}.

\bibitem{Huang(2015)}
Huang H.-W., Finite-dimensional irreducible modules of the universal
 {A}skey--{W}ilson algebra, \href{https://doi.org/10.1007/s00220-015-2467-9}{\textit{Comm. Math. Phys.}} \textbf{340} (2015),
 959--984, \href{https://arxiv.org/abs/1210.1740}{arXiv:1210.1740}.

\bibitem{Huang(2021)}
Huang H.-W., Finite-dimensional irreducible modules of the universal
 {A}skey--{W}ilson algebra at roots of unity, \href{https://doi.org/10.1016/j.jalgebra.2020.11.012}{\textit{J.~Algebra}} \textbf{569}
 (2021), 12--29, \href{https://arxiv.org/abs/1906.01776}{arXiv:1906.01776}.

\bibitem{KS(2001)}
Koelink E., Stokman J.V., The {A}skey--{W}ilson function transform,
 \href{https://doi.org/10.1155/S1073792801000575}{\textit{Internat. Math. Res. Notices}} \textbf{2001} (2001), 1203--1227,
 \href{https://arxiv.org/abs/math.CA/0004053}{arXiv:math.CA/0004053}.

\bibitem{Koor(2007)}
Koornwinder T.H., The relationship between {Z}hedanov's algebra {${\rm AW}(3)$}
 and the double affine {H}ecke algebra in the rank one case, \href{https://doi.org/10.3842/SIGMA.2007.063}{\textit{SIGMA}}
 \textbf{3} (2007), 063, 15~pages, \href{https://arxiv.org/abs/math.QA/0612730}{arXiv:math.QA/0612730}.

\bibitem{Koor(2008)}
Koornwinder T.H., Zhedanov's algebra {$\rm AW(3)$} and the double affine
 {H}ecke algebra in the rank one case.~{II}. {T}he spherical subalgebra,
 \href{https://doi.org/10.3842/SIGMA.2008.052}{\textit{SIGMA}} \textbf{4} (2008), 052, 17~pages, \href{https://arxiv.org/abs/0711.2320}{arXiv:0711.2320}.

\bibitem{Lav(1997)}
Lavrenov A., On {A}skey--{W}ilson algebra, \href{https://doi.org/10.1023/A:1022821531517}{\textit{Czechoslovak~J. Phys.}}
 \textbf{47} (1997), 1213--1219.

\bibitem{PW(2017)}
Post S., Walter A., A higher rank extension of the {A}skey--{W}ilson algebra,
 \href{https://arxiv.org/abs/1705.01860}{arXiv:1705.01860}.

\bibitem{Ter(2011)}
Terwilliger P., The universal {A}skey--{W}ilson algebra, \href{https://doi.org/10.3842/SIGMA.2011.069}{\textit{SIGMA}}
 \textbf{7} (2011), 069, 24~pages, \href{https://arxiv.org/abs/1104.2813}{arXiv:1104.2813}.

\bibitem{TV(2004)}
Terwilliger P., Vidunas R., Leonard pairs and the {A}skey--{W}ilson relations,
 \href{https://doi.org/10.1142/S0219498804000940}{\textit{J.~Algebra Appl.}} \textbf{3} (2004), 411--426, \href{https://arxiv.org/abs/math.QA/0305356}{arXiv:math.QA/0305356}.

\bibitem{Zhe(1991)}
Zhedanov A.S., ``{H}idden symmetry'' of {A}skey--{W}ilson polynomials,
 \href{https://doi.org/10.1007/BF01015906}{\textit{Theoret. and Math. Phys.}} \textbf{89} (1991), 1146--1157.

\end{thebibliography}
\end{document}